\documentclass{article}
\usepackage[margin=1in]{geometry}

\usepackage{amsmath,amssymb,amsthm,mathtools}
\usepackage{braket}

\usepackage{hyperref}

\usepackage{graphicx}
\usepackage{subcaption}

\usepackage{caption}
\newtheorem{theorem}{Theorem}
\newtheorem{definition}{Definition}
\newtheorem{proposition}{Proposition}
\newtheorem{lemma}{Lemma}
\newtheorem{corollary}{Corollary}
\theoremstyle{remark}

\begin{document}
\title{What is Smoothness?}
\author{Zachary P. Bradshaw\footnote{Corresponding author: zak@qodexquantum.ai}}
\date{QodeX Quantum}

\maketitle

\begin{abstract}
Smoothness of a function on the real line is reflected in the decay of its Fourier transform, which suggests that smoothness of a function in $L^2(G)$ for a group $G$ should mean concentration of the Fourier coefficients at low frequency. Such a reading presupposes an ordering of the irreducible representations of $G$, but for non-abelian $G$, no ordering is canonical. Given a symmetric generating set $S$, the Laplacian of the associated Cayley graph is block diagonal over the dual, and we order the irreps by the mean of the eigenvalues in each block. This produces an ordering function $\omega:\widehat{G}\to\mathbb{R}$ that depends only on the pair $(G,S)$. This function is bounded between zero and two, vanishing only at the trivial representation and achieving the upper bound exactly when the Cayley graph is bipartite. We then ask how much freedom the construction has. Within the class of operators that are Hermitian, left invariant, annihilate the constant functions, and assign an energy insensitive to complex conjugation, the induced orderings are exactly the real functions on the dual vanishing at the trivial representation and agreeing on conjugate pairs, and the orderings coming from inversion orbits of conjugacy classes form a basis for them. We cut the freedom down further by requiring two additional inputs: nonnegativity of the class weights and a declaration of which group elements count as uniform incremental changes, which pins the operator to the Cayley-Laplacian up to positive scale. We observe that the construction persists for compact groups even though the Cayley graph does not, and we extend the theory to finite sets carrying a transitive group action, where the acting group selects which frequencies exist and the generating set orders them. The answer to the title question is therefore that smoothness is a property of a function together with a choice of group and generating set, not of the function alone.
\end{abstract}


\section{Introduction}

The canonical notion of smoothness taught in a single variable calculus course is given by a function which is continuously differentiable to any order. By applying integration by parts repeatedly, it can be shown that a smooth function with derivatives in $L^1(\mathbb{R})$ for any order $m$ has a Fourier transform that decays superpolynomially. However, the converse is false; the decay of the Fourier transform does not fully characterize the space of smooth functions. The Fourier transform does, however, preserve the closely related Schwartz space. Indeed, denoting the Schwartz space by $\mathcal{S}(\mathbb{R})$, one finds that $f\in\mathcal{S}(\mathbb{R})$ if and only if $\widehat{f}\in\mathcal{S}(\mathbb{R})$, making Schwartz functions the natural function space to do Fourier analysis.

This result motivates the perspective of Belis et al.~\cite{belis2026spectral} that smoothness on $L^2(G)$ for an arbitrary group $G$ should be interpreted as a bias in which higher order frequencies rapidly decay~\cite{rahaman2019spectral,xu2019frequency}. In their work, they construct a generalized set of ``frequencies'' from the irreducible representations (irreps) of $G$ which can be low-pass filtered to induce smoothness on a data distribution. For abelian groups, such as $\mathbb{Z}_2^n$, a natural ordering of the irreps is available in the form of the Hamming weight; the irreps are labeled by the elements of $\mathbb{Z}_2^n$, and the elements which contain more zeros are interpreted as smoother than those with fewer. This ordering makes great sense when one Fourier expands a function $f:\mathbb{Z}_2^n\to\mathbb{C}$, producing
\begin{equation}\label{eq:z2n-fourier}
    f(x)=\sum_{k\in\mathbb{Z}_2^n}\widehat{f}(k)(-1)^{k\cdot x}.
\end{equation}
The dependence on $x$ shows up entirely in the sign function $(-1)^{k\cdot x}$, and when a component $k_i$ of $k$ is zero, the corresponding component $x_i$ of $x$ does not affect the value of $f$. Thus, those $k$ with high Hamming weight have the most potential to change $f$, validating the perspective that such $k$ should be interpreted as higher frequency.

This perspective is perfectly natural, but non-abelian groups introduce an added complexity: the ordering of the irreps is no longer obvious, and the problem is exacerbated by the fact that non-abelian groups have irreps of dimension greater than one. In this work, we show that such an ordering does exist, and the mathematics used to obtain it has existed for decades in the works of Diaconis~\cite{diaconis1988group}, Babai~\cite{babai1979spectra}, and Terras~\cite{terras1999fourier}. Already, an ordering has been constructed for the irreps of $S_n$ in the context of random walks, where the object of interest is not the ordering itself, but the spectral gap question of which nontrivial irrep sits lowest~\cite{diaconis1981generating,caputo2010proof}. The same eigenvalues appear in the study of fitness landscapes, where the energy of a function across the Laplacian eigenspaces of a Cayley graph is called its amplitude spectrum and is read as a measure of ruggedness~\cite{stadler1996landscapes,hordijk1998amplitude,rockmore2002fast}, and in graph signal processing on Cayley graphs, where the block structure over the dual is used to select a Fourier basis or a frame rather than to compare one generating set against another~\cite{ghandehari2019non,chen2021signal,beck2024frames}. The same spectral data organizes stationary kernels on groups and their homogeneous spaces~\cite{azangulov2024stationary}. Here we use this pre-existing spectral machinery to extract an irrep-level ordering for arbitrary finite groups, followed by an extension to compact groups, using tools from graph signal processing, in which frequencies are characterized by Laplacian spectra~\cite{ortega2018graph,leus2023graph,dong2020graph}. 

Crucially, our notion of smoothness depends on the choice of generating set for the group of interest. This choice serves as a declaration of which elements of the group count as a small perturbation, and this structure is exhibited by the corresponding Cayley graph, in which group elements that differ by few elements of the generating set are closer together. The quadratic form of the Laplacian of this graph quantifies how much a given function changes when its argument is replaced by the group elements adjacent to it, thereby encoding a natural notion of smoothness. For this reason, we block diagonalize the Laplacian and associate a scalar to each block and therefore each irrep, and this scalar encodes the ordering of the irreps.

Concretely, we define the ordering parameter of an irrep $\sigma$ to be the mean of the eigenvalues of the Laplacian on the $\sigma$-isotypic Peter-Weyl block $\mathcal{A}_\sigma$, which takes the form $\omega(\sigma)=1-\operatorname{Tr}[M_\sigma]/d_\sigma$, where $M_\sigma$ is the average of $\sigma$ over the generating set. This definition requires no hypothesis on the generating set beyond symmetry, but it is common in the literature on random walks to also assume that the generating set is closed under conjugation. In this case, Schur's lemma collapses each block to a scalar, and the ordering parameter is determined by the normalized character sum. Defining the ordering parameter as a mean of block eigenvalues retains a single number per irrep; although, it is worth noting that the ordering on $\widehat{G}$ is coarser than the ordering on the spectrum of the Laplacian itself.

A construction of this kind invites the objection that the Laplacian was an arbitrary starting point. To address this concern, we isolate four properties that any operator serving as a measure of smoothness should have, namely that it be Hermitian, that it commute with all left translations, that it annihilate the constant functions, and that the energy it assigns be unchanged under the complex conjugation of the argument. We then show that these properties alone do not narrow the ordering down; every real function on $\widehat{G}$ vanishing at the trivial representation and agreeing on conjugate pairs is realized. However, the ordering $\omega_K$ associated to inversion-closed unions $K=C\cup C^{-1}$ of conjugacy classes form a basis so that what the axioms provide is a coordinate system, in which an ordering is a weight vector recording which conjugacy classes count as small steps and how much they count. Two further requirements do cut the freedom down. Nonnegativity of that weight vector produces a positive semidefinite weighted Cayley-Laplacian, and declaring in addition that the elements of a fixed generating set are the incremental changes and that none counts for more than another pins the operator to the Cayley-Laplacian up to positive scale. In this sense, the Cayley-Laplacian is not one choice among many, and it is the axioms about the generating set rather than those about the operator that make it so.

We additionally show how to remove the assumption that the domain of the function is a group. For a function defined on a finite set that carries a transitive group action, the domain is identified with a coset space, and we lift the function to a function on the group. The theory developed for functions on groups then applies to the lifted functions. Here, a second layer of choice arises. The acting group determines which frequencies of the group are accessible since an irrep contributes to the function space precisely when it contains a vector fixed by the stabilizer. Meanwhile, the generating set orders whatever frequencies survive. These two choices are independent, and we exhibit pairs of groups acting on the same set for which the resulting notions of smoothness substantially differ. Finally, we observe that finiteness is used in only one essential place, namely in drawing the Cayley graph, and that the ordering parameter and its basic properties persist for any compact group.

The remainder of the paper is organized as follows. Section~\ref{sec:finite-groups} develops the construction for finite groups, establishes the block diagonalization of the Laplacian, defines the ordering parameter, and characterizes the irreps attaining its extreme values. Section~\ref{sec:characterization} contains the characterization of admissible operators and their induced orderings. Section~\ref{sec:group-action} extends the theory to sets carrying a group action and determines which frequencies survive the descent. Section~\ref{sec:examples} works through several examples, with figures illustrating how a single function changes its apparent smoothness when the generating set is changed. We use a final example to demonstrate how to interpret smoothness as an inductive bias and showcase how a designer might choose the various parameters of the smoothness model in Section~\ref{sec:prior}. Section~\ref{sec:compact} treats compact groups, and Section~\ref{sec:conclusion} gives concluding remarks.

\section{Smoothness on Finite Groups}\label{sec:finite-groups}

Given a finite group $G$, the space of functions $f:G\to\mathbb{C}$ is denoted $L^2(G)$, and it carries an inner product
\begin{equation}
    \langle f,h\rangle=\frac{1}{\lvert G\rvert}\sum_{g\in G}\overline{f(g)}h(g).
\end{equation}
This space contains the functions that we study here. Our goal is to introduce a notion of smoothness on $G$ which is analogous to that on $\mathbb{R}$. An orthonormal basis for this space is given by the normalized delta functions $\sqrt{\lvert G\rvert}\delta_g$ defined by $\delta_g(g')=\delta_{g,g'}$, where $\delta_{g,g'}$ denotes the Kronecker delta function which takes the value one if $g=g'$ and zero otherwise. In this basis, a function is expanded as
\begin{equation}
    f=\frac{1}{\sqrt{\lvert G\rvert}}\sum_{g\in G}f(g)\sqrt{\lvert G\rvert}\delta_g.
\end{equation}
By the Peter-Weyl theorem~\cite{fulton2013representation}, another orthonormal basis is given by the normalized matrix entries of the irreps of $G$, denoted $\sqrt{d_\sigma}\sigma_{ij}$ for irrep $\sigma$. The collection of all irreps is known as the dual $\widehat{G}$. When $G$ is abelian, all irreps are one-dimensional and equivalent to the characters of $G$, which are generally defined by $\chi_\sigma(g)=\operatorname{Tr}[\sigma(g)]$. When this is the case, the dual has the structure of a group~\cite{rudin1962fourier}, and although that structure is lost for non-abelian groups, the characters do still form a basis for the space of class functions. Note that every representation of a finite group is unitarizable, and so we make the assumption throughout this work that each irrep is unitary without loss of generality.

As noted in the introduction, smoothness on $\mathbb{R}$ is linked to the decay of the Fourier transform. In order to establish such a connection on $G$, we define the Fourier transform on $L^2(G)$ as a map $\mathcal{F}_G:L^2(G)\to\bigoplus_{\sigma\in\widehat{G}}\operatorname{End}(\mathcal{H}_\sigma)$ defined by
\begin{equation}
    \widehat{f}(\sigma):=(\mathcal{F}_Gf)(\sigma):=\frac{\sqrt{d_\sigma}}{\lvert G\rvert}\sum_{g\in G}f(g)\overline{\sigma(g)}.
\end{equation}
An analog of Parseval's identity holds and reads $\|f\|^2=\sum_{\sigma\in\widehat{G}}\|\widehat{f}(\sigma)\|_F^2$, where $\|\cdot\|_F$ denotes the Frobenius norm. Concentration of $\widehat{f}$ at low $\omega$ is therefore a statement about how the squared norm of $f$ distributes over $\widehat{G}$, and we shall have more to say about this in the next section. When $G$ is abelian, the codomain is equivalent to $L^2(\widehat{G})$, and the Fourier transform takes the form
\begin{equation}
    \widehat{f}(\chi)=\frac{1}{\lvert G\rvert}\sum_{g\in G}f(g)\overline{\chi(g)},
\end{equation}
where $\chi\in\widehat{G}$ is a character of $G$. In particular, when $G=\mathbb{Z}_d$, the characters are $\chi_k(x)=e^{2\pi i k x/d}$ with $k,x\in\mathbb{Z}_d$, and the Fourier transform takes the familiar form
\begin{equation}
    \widehat{f}(\chi_k)=\frac{1}{\lvert G\rvert}\sum_{x\in G}f(x)e^{-2\pi i k x/d}.
\end{equation}
When $G=\mathbb{Z}_2^n$, the characters are $\chi_k(x)=(-1)^{k\cdot x}$ with $k,x\in\mathbb{Z}_2^n$, and we recover \eqref{eq:z2n-fourier}.

As suggested by Belis et al.~\cite{belis2026spectral}, we interpret a smooth function to be a function with Fourier coefficients concentrated at low frequency. However, this assertion presupposes an ordering on the Fourier coefficients, and for non-abelian groups, a natural choice is not particularly obvious. The case of abelian groups is fortunately simpler, and we return to the case $G=\mathbb{Z}_2^n$ as an example. Observe that the character $(-1)^{k\cdot x}$ depends only on the $x$ coordinates for which the corresponding $k$ coordinate is nonzero; any change in $x_i$ for which $k_i=0$ does not affect the outcomes of the Fourier transform. Thus, the characters with more nonzero coordinates are more susceptible to a change in the function domain. This motivates the Hamming weight of $k$~\cite{odonnel2014analysis} as a natural choice for an ordering on $\widehat{G}$, so that smooth functions are those with Fourier coefficients concentrated in the low Hamming weight regime.

In order to define smoothness in an analogous way for a general finite group, let us now produce a method for equipping their Fourier coefficients with an ordering. To this end, let $G$ be a finite group and choose a generating set $S$. We assume that $S$ is symmetric, meaning it is closed under inversion: $s^{-1}\in S$ for all $s\in S$. The \emph{Cayley graph} associated to this choice of generating set is the graph with a vertex for every group element and edges between vertices whenever the group elements labeling them differ by a right multiplication by an element of $S$. The Cayley graph carries a natural ordering of the group elements in the form of a generalized Hamming weight given by the minimum number of generators needed to express a given group element. One recovers the Hamming weight by choosing $G=\mathbb{Z}_2^n$ and $S=\{e_i\}_{i=1}^n$, where $e_i=(0,\ldots,0,1,0,\ldots,0)$ denotes the standard basis vector with a one in the $i$-th component and zero otherwise. The Cayley graph treats elements of the generating set $S$ as the smallest possible perturbations of a group element, and so any notion of smoothness with respect to this structure should probe what happens to the value of a function on $G$ as its argument is perturbed by a generator. Thus, the notion of smoothness depends on the choice of generating set. We remark that this generalization appears in the study of symmetry-based quantum codes as a natural notion of distance~\cite{bradshaw2026symmetry}, and it would be interesting to determine whether the Hamming weight circuits of Rethinasamy et al.~\cite{rethinasamy2024logarithmic} can be extended to compute it.

With a notion of incremental change placed on the group, we now measure the change of $f$ under a small perturbation in its argument. We define an averaging operator $A:L^2(G)\to L^2(G)$ by
\begin{equation}
    (Af)(g)=\frac{1}{\lvert S\rvert}\sum_{s\in S}f(gs),
\end{equation}
which replaces the value of the function $f$ at the given $g$ with its average over all adjacent vertices in the Cayley graph. Then the Laplacian operator $\mathcal{L}=I-A$~\cite{babai1979spectra,chung1997spectral} defines the average change in $f$ over all adjacent vertices in the Cayley graph, equivalently all incremental changes with respect to the generalized Hamming weight. The reader familiar with the combinatorial Laplacian of a graph may wonder why the degree matrix does not appear in our definition. This is because we use the normalized Laplacian and the Cayley graph is $\lvert S\rvert$-regular, so that the normalized degree matrix becomes the identity operator. The following proposition makes the interpretation of $\mathcal{L}$ as a measure of change in $f$ precise.

\begin{proposition}\label{prop:quadratic}
    Let $f\in L^2(G)$. Then
    \begin{equation}
        \langle f,\mathcal{L}f\rangle =\frac{1}{2\lvert G\rvert\lvert S\rvert}\sum_{g\in G}\sum_{s\in S}\lvert f(g)-f(gs)\rvert^2.
    \end{equation}
\end{proposition}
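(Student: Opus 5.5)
The plan is to expand the right-hand side and match it term by term with $\langle f,\mathcal{L}f\rangle=\langle f,f\rangle-\langle f,Af\rangle$, using only the definitions of the inner product, of $A$, and two reindexing facts. The first fact is that for fixed $s\in S$ the map $g\mapsto gs$ is a bijection of $G$. The second is that $S$ is symmetric, so $s\mapsto s^{-1}$ is a bijection of $S$.

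First I expand each summand as
\begin{equation*}
\lvert f(g)-f(gs)\rvert^2=\lvert f(g)\rvert^2+\lvert f(gs)\rvert^2-\overline{f(g)}f(gs)-f(g)\overline{f(gs)}.
\end{equation*}
I then sum over $g\in G$ and $s\in S$ and treat the four resulting terms separately.

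\begin{itemize}
\item The first term gives $\lvert S\rvert\sum_g\lvert f(g)\rvert^2$.
\item The second term gives the same quantity, after substituting $g'=gs$ for each fixed $s$. Together these two terms contribute $2\lvert S\rvert\,\lvert G\rvert\,\langle f,f\rangle$.
\item The third term is $\sum_g\overline{f(g)}\sum_s f(gs)=\lvert S\rvert\sum_g\overline{f(g)}(Af)(g)=\lvert S\rvert\,\lvert G\rvert\,\langle f,Af\rangle$.
\end{itemize}

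The fourth term, $\sum_{g,s}f(g)\overline{f(gs)}$, is the only step needing care, and it is where symmetry of $S$ enters. For fixed $s$, I substitute $h=gs$, so that $g=hs^{-1}$ and the term becomes $\sum_h\overline{f(h)}f(hs^{-1})$. Summing over $s$ and reindexing $s\mapsto s^{-1}$, which is allowed because $S=S^{-1}$, gives exactly the third term again. Equivalently, this step shows that $A$ is self-adjoint, so $\langle f,Af\rangle$ is real and equals its conjugate.

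Collecting the four terms, the double sum equals $2\lvert G\rvert\lvert S\rvert\bigl(\langle f,f\rangle-\langle f,Af\rangle\bigr)$. Dividing by $2\lvert G\rvert\lvert S\rvert$ gives $\langle f,(I-A)f\rangle=\langle f,\mathcal{L}f\rangle$, which is the claimed identity. The only obstacle is the fourth term: without a symmetric generating set it would produce $\langle f,A^{*}f\rangle$ instead of $\langle f,Af\rangle$, and the identity would fail.
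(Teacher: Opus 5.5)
Your proof is correct and follows essentially the same route as the paper. You expand the square, use the bijection $g\mapsto gs$ for the diagonal terms, and use $S=S^{-1}$ to identify the conjugate cross term with $\langle f,Af\rangle$; the paper packages this same reindexing as a preliminary proof that $A$ is Hermitian, then writes the cross terms as $-2\operatorname{Re}\langle f,Af\rangle$.
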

\begin{proof}
    We first show that $A$ is Hermitian. Indeed,
    \begin{equation}
        \begin{aligned}
            \langle f,Ah\rangle &=\frac{1}{\lvert G\rvert}\sum_{g\in G}\overline{f(g)}\frac{1}{\lvert S\rvert}\sum_{s\in S}h(gs)\\
            &=\frac{1}{\lvert G\rvert}\frac{1}{\lvert S\rvert}\sum_{s\in S}\sum_{g\in G}\overline{f(g)}h(gs)\\
            &=\frac{1}{\lvert G\rvert}\frac{1}{\lvert S\rvert}\sum_{s\in S}\sum_{g\in G}\overline{f(gs^{-1})}h(g)\\
            &=\frac{1}{\lvert G\rvert}\sum_{g\in G}\left(\frac{1}{\lvert S\rvert}\sum_{s\in S}\overline{f(gs^{-1})}\right)h(g),
        \end{aligned}
    \end{equation}
    and since $S$ is symmetric, we may re-index the sum over $S$, producing
    \begin{equation}
        \langle f,Ah\rangle=\frac{1}{\lvert G\rvert}\sum_{g\in G}\left(\frac{1}{\lvert S\rvert}\sum_{s\in S}\overline{f(gs)}\right)h(g)=\langle Af,h\rangle.
    \end{equation}
    Thus, $A$ is Hermitian.

    We now prove the proposition. Expanding the right hand side, we have
    \begin{equation}
        \begin{aligned}
            \frac{1}{2\lvert G\rvert\lvert S\rvert}\sum_{g\in G}\sum_{s\in S}\lvert f(g)-f(gs)\rvert^2
            &=\frac{1}{2\lvert G\rvert\lvert S\rvert}\sum_{g\in G}\sum_{s\in S}\left(\lvert f(g)\rvert^2+\lvert f(gs)\rvert^2-2\operatorname{Re}(\overline{f(g)}f(gs))\right)\\
            &=\frac{1}{2}\left(\frac{1}{\lvert G\rvert}\sum_{g\in G}\lvert f(g)\rvert^2+\frac{1}{\lvert G\rvert}\sum_{g\in G}\lvert f(g)\rvert^2-2\operatorname{Re}\langle f,Af\rangle\right).
        \end{aligned}
    \end{equation}
    Now using the fact that $A$ is Hermitian, we have
    \begin{equation}
        \operatorname{Re}\langle f,Af\rangle=\langle f,Af\rangle,
    \end{equation}
    from which it follows that
    \begin{equation}
        \frac{1}{2\lvert G\rvert\lvert S\rvert}\sum_{g\in G}\sum_{s\in S}\lvert f(g)-f(gs)\rvert^2=\langle f,(I-A)f\rangle.
    \end{equation}
    Since $\mathcal{L}=I-A$, the proposition holds.
\end{proof}

We complete the construction of our ordering by block diagonalizing the Laplacian in the irrep basis. The eigenvectors of the Laplacian determine the direction of change, and the eigenvalues determine the magnitude of that change. The larger eigenvalues therefore correspond to larger amounts of change. However, we are looking for a scalar in each block; that is, one for each irrep. For this reason, we order the irreps by the average of the eigenvalues in each block.

\begin{proposition}\label{prop:block-diag}
    The Laplacian $\mathcal{L}$ is block diagonal with a block for every irreducible representation $\sigma$. Moreover, in each block, $\mathcal{L}$ acts as $I_{d_\sigma}\otimes(I_{d_\sigma}- M_\sigma)$, where $M_\sigma:=\frac{1}{\lvert S\rvert}\sum_{s\in S}\sigma(s)$.
\end{proposition}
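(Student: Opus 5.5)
Since $\mathcal{L}=I-A$, it suffices to compute how $A$ acts on the Peter--Weyl basis. By the Peter--Weyl theorem, the normalized matrix entries $\sqrt{d_\sigma}\,\sigma_{ij}$ form an orthonormal basis of $L^2(G)$. For each irrep $\sigma$ I would let $\mathcal{A}_\sigma=\operatorname{span}\{\sigma_{ij}:1\le i,j\le d_\sigma\}$ be the corresponding isotypic subspace. The plan is to show that $A$ maps each $\mathcal{A}_\sigma$ into itself, and then to compute the matrix of $A|_{\mathcal{A}_\sigma}$ explicitly.

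The key computation uses the homomorphism property $\sigma(gs)=\sigma(g)\sigma(s)$. Entrywise this gives $\sigma_{ij}(gs)=\sum_k \sigma_{ik}(g)\sigma_{kj}(s)$. Averaging over $S$ then yields
\begin{equation}
    (A\sigma_{ij})(g)=\frac{1}{\lvert S\rvert}\sum_{s\in S}\sigma_{ij}(gs)=\sum_{k=1}^{d_\sigma}\sigma_{ik}(g)\,(M_\sigma)_{kj}.
\end{equation}
So $A\sigma_{ij}$ is a linear combination of $\sigma_{i1},\dots,\sigma_{id_\sigma}$ only. Two things follow. First, $\mathcal{A}_\sigma$ is invariant under $A$, hence under $\mathcal{L}$, which gives the block-diagonal decomposition $L^2(G)=\bigoplus_{\sigma\in\widehat{G}}\mathcal{A}_\sigma$. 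Second, within $\mathcal{A}_\sigma$ the row index $i$ is never altered: for each fixed $i$, the row space $R_i=\operatorname{span}\{\sigma_{i1},\dots,\sigma_{id_\sigma}\}$ is invariant, and $A$ acts on the coordinate vector indexed by $j$ through the matrix $M_\sigma$. More precisely, it acts by $M_\sigma^{T}$ or $M_\sigma$, depending on whether one writes coefficients as columns or rows.

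To identify the tensor structure, I would order the basis of $\mathcal{A}_\sigma$ lexicographically by $(i,j)$ and identify $\mathcal{A}_\sigma\cong\mathbb{C}^{d_\sigma}\otimes\mathbb{C}^{d_\sigma}$ via $\sigma_{ij}\mapsto e_i\otimes e_j$. Under this identification $A|_{\mathcal{A}_\sigma}=I_{d_\sigma}\otimes M_\sigma$, up to the transpose convention, and therefore $\mathcal{L}|_{\mathcal{A}_\sigma}=I_{d_\sigma}\otimes(I_{d_\sigma}-M_\sigma)$. The transpose needs a short remark. Because $S$ is symmetric and $\sigma$ is unitary, $M_\sigma^\dagger=\frac{1}{\lvert S\rvert}\sum_{s\in S}\sigma(s^{-1})=M_\sigma$, so $M_\sigma$ is Hermitian and $M_\sigma^T=\overline{M_\sigma}$. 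Two routes then recover $M_\sigma$ exactly. One is to use the basis $\overline{\sigma_{ij}}$ for the block, matching the convention of $\mathcal{F}_G$. The other is to note that $M_\sigma^T$ and $M_\sigma$ are unitarily similar, since they have the same real spectrum. Either way the block equals $I_{d_\sigma}\otimes(I_{d_\sigma}-M_\sigma)$ up to a unitary change of basis within $\mathcal{A}_\sigma$, which is the intended meaning of ``acts as''.

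The main obstacle is bookkeeping rather than mathematics. I need to fix the index convention consistently, including the side on which right multiplication acts and whether the basis is $\sigma_{ij}$ or its conjugate, so that $M_\sigma$ appears rather than its transpose. I also need to state clearly that the blocks are orthogonal, which follows from Schur orthogonality as already packaged in the Peter--Weyl theorem.
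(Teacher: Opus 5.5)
Your proposal is correct and follows the paper's proof. It uses the same entrywise homomorphism identity to get $(A\sigma_{ij})(g)=\sum_k\sigma_{ik}(g)(M_\sigma)_{kj}$, so $A$ fixes $\sigma$ and the row index $i$ and mixes $j$ through $M_\sigma$. The transpose worry is unnecessary: $A\sigma_{ij}=\sum_k (M_\sigma)_{kj}\sigma_{ik}$ already says the matrix of $A$ on $\operatorname{span}\{\sigma_{i1},\dots,\sigma_{id_\sigma}\}$ is exactly $M_\sigma$. Your first route is backwards, since passing to the basis $\overline{\sigma_{ij}}$ would instead produce $\overline{M_\sigma}=M_\sigma^{T}$, but your unitary-similarity argument makes the conclusion hold either way.
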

\begin{proof}
    Let $\sigma$ be an irrep of $G$ and observe that
    \begin{equation}
            (A\sigma_{ij})(g)=\frac{1}{\lvert S\rvert}\sum_{s\in S}\sigma_{ij}(gs)
            =\frac{1}{\lvert S\rvert}\sum_{s\in S}\sum_{k=1}^{d_\sigma}\sigma_{ik}(g)\sigma_{kj}(s)
            =\sum_{k=1}^{d_\sigma}\sigma_{ik}(g)\frac{1}{\lvert S\rvert}\sum_{s\in S}\sigma_{kj}(s),
    \end{equation}
    where in the second equality, we have used the fact $\sigma(gs)=\sigma(g)\sigma(s)$ that $\sigma$ is a homomorphism. Defining $M_\sigma:=\frac{1}{\lvert S\rvert}\sum_{s\in S}\sigma(s)$, we have
    \begin{equation}
        (A\sigma_{ij})(g)=\sum_{k=1}^{d_\sigma}\sigma_{ik}(g)(M_\sigma)_{k,j}.
    \end{equation}
    That is, $A$ preserves $\sigma$ and $i$, but mixes along $j$. In other words, $A$ preserves the $\sigma$-isotypic Peter-Weyl block $\mathcal{A}_\sigma:=\operatorname{span}\{\sigma_{ij}\}$ and is therefore block diagonal with one block per irrep, and in each block, $A$ acts as $I_{d_\sigma}\otimes M_\sigma$. Noting that $\mathcal{L}=I-A$, we therefore have that $\mathcal{L}$ acts as $I_{d_\sigma}\otimes(I_{d_\sigma}- M_\sigma)$ in each block.
\end{proof}

By Proposition~\ref{prop:block-diag}, the eigenvalues of $\mathcal{L}$ in the block corresponding to irrep $\sigma$ are determined by the eigenvalues of $M_\sigma$. Since the Laplacian acts on the $\sigma$-isotypic Peter-Weyl block $\mathcal{A}_\sigma$ as $I_{d_\sigma}\otimes(I_{d_\sigma}-M_\sigma)$, it has up to $d_\sigma$ distinct eigenvalues, each of multiplicity $d_\sigma$. Their magnitudes dictate the amount of change in $f$ with an incremental change in its argument, and so we order the irreps by the average of the eigenvalues in each block.

\begin{definition}
    The constant $\omega(\sigma):=1-\frac{1}{d_\sigma}\operatorname{Tr}[M_\sigma]$ is called the ordering parameter of $\sigma$.
\end{definition}

If $\sigma'=U\sigma U^\dagger$ is an equivalent unitary irrep, then $M_{\sigma'}=UM_\sigma U^\dagger$, so $\operatorname{Tr}[M_{\sigma'}]=\operatorname{Tr}[M_\sigma]$ and $\omega(\sigma')=\omega(\sigma)$. The ordering parameter therefore depends only on the equivalence class and is well defined as a function of $\widehat{G}$. Since the eigenvalues of $M_\sigma$ are in $[-1,1]$ (we show $M_\sigma$ is Hermitian in Proposition~\ref{prop:admissible-block}), the ordering parameter is bounded by $0\le\omega(\sigma)\le2$. Indeed, since $\sigma$ is unitary,
\begin{equation}
    \| M_\sigma\|\le\frac{1}{\lvert S\rvert}\sum_{s\in S}\|\sigma(s)\|=1,
\end{equation}
and it follows that $\omega(\sigma)\in[0,2]$. In fact, the lower bound is only achieved by the trivial representation, and we can characterize when the upper bound is achieved by the structure of the Cayley graph.

\begin{proposition}
    Let $\sigma$ be an irrep of $G$ and let $S$ denote a generating set. Then $\omega(\sigma)=0$ if and only if $\sigma$ is the trivial representation, and $\max_\sigma\omega(\sigma)=2$ if and only if the Cayley graph associated to $(G,S)$ is bipartite, in which case the unique maximizer is the associated sign character.
\end{proposition}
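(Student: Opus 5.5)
The plan is to reduce both claims to the observation that $\operatorname{Tr}[M_\sigma]/d_\sigma$ is an average of unit-modulus quantities, and then to study when such an average sits at an extreme point.

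\textbf{Lower bound.} By Proposition~\ref{prop:block-diag}, the eigenvalues of $M_\sigma$ lie in $[-1,1]$. Hence $\omega(\sigma)=0$ forces every eigenvalue of $M_\sigma$ to equal $1$. Since $M_\sigma$ is Hermitian, this gives $M_\sigma=I$.

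Take any unit vector $v$. Then $1=\langle v,M_\sigma v\rangle=\frac{1}{\lvert S\rvert}\sum_{s}\langle v,\sigma(s)v\rangle$. Each term satisfies $\lvert\langle v,\sigma(s)v\rangle\rvert\le 1$ because $\sigma(s)$ is unitary. An average of numbers in the closed unit disk equals $1$ only if every term equals $1$. Equality in Cauchy--Schwarz then gives $\sigma(s)v=v$ for every $s\in S$ and every $v$, so $\sigma(s)=I$ on $S$.

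Because $S$ generates $G$ and $\sigma$ is a homomorphism, $\sigma$ is trivial on all of $G$. For irreducible $\sigma$ this means $\sigma$ is the one-dimensional trivial representation. The converse is immediate, since $M_{\mathrm{triv}}=1$.

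\textbf{Upper bound.} The same argument with $-1$ in place of $1$ applies. If $\omega(\sigma)=2$, then $M_\sigma=-I$, and so $\sigma(s)=-I$ for all $s\in S$. Consequently $\sigma(g)=(-1)^{\ell}I$ whenever $g$ is a product of $\ell$ generators. Each $\sigma(g)$ is then a scalar, so every subspace is invariant, and irreducibility forces $d_\sigma=1$. Thus $\sigma$ is a character $\epsilon:G\to\{\pm1\}$ with $\epsilon(s)=-1$ on $S$.

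Such an $\epsilon$ yields a bipartition of the Cayley graph, with parts $\epsilon^{-1}(1)$ and $\epsilon^{-1}(-1)$. Every edge $g\sim gs$ changes the sign, since $\epsilon(gs)=-\epsilon(g)$. Both parts are nonempty because $\epsilon(s)=-1$ for some $s$. Uniqueness of the maximizer follows from the same computation: $\epsilon$ is determined on the generating set $S$, hence on all of $G$.

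\textbf{Converse.} Suppose the Cayley graph is bipartite. The step I expect to need the most care is producing a homomorphism from a bipartition, rather than merely a $2$-coloring. I plan to argue as follows.

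Connectedness holds because $S$ generates $G$, so the bipartition is unique up to swapping. Let $H$ be the part containing $e$, and define $\epsilon(g)=1$ on $H$ and $-1$ otherwise. Equivalently, $\epsilon(g)=(-1)^{\ell}$ for any word of length $\ell$ in $S$ representing $g$. This is well defined because bipartiteness forces every closed walk to have even length; in other words, any two words representing $g$ have lengths of the same parity.

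Multiplicativity then follows by concatenating words, since the length of a concatenation is the sum of the lengths. Hence $\epsilon$ is a character with $\epsilon(s)=-1$ for all $s\in S$. This gives $M_\epsilon=-1$ and $\omega(\epsilon)=2$, which combined with the global bound $\omega\le 2$ yields $\max_\sigma\omega(\sigma)=2$.
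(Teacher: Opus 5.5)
Your proposal is correct and follows essentially the same route as the paper. The paper also argues that the mean eigenvalue sitting at an endpoint of $[-1,1]$ forces $M_\sigma=\pm I$, then gets $\sigma(s)=\pm I$ on $S$ from the norm identity (your Cauchy--Schwarz step), uses scalar images to force $d_\sigma=1$, and reads the bipartition off the resulting sign character. Your word-length parity construction in the converse fills in a step the paper only asserts: that the bipartition class of $e$ is an index-two subgroup, so the sign character exists. Your uniqueness argument, that a character is determined by its values on the generating set $S$, is a direct version of the paper's observation that $\sigma_1\sigma_2$ is trivial.
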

\begin{proof}
    Let $\varepsilon\in\{\pm1\}$ and suppose $\omega(\sigma)=1-\varepsilon$. The mean of the eigenvalues of $M_\sigma$ is given by $\varepsilon$, which is an endpoint of the interval $[-1,1]$ containing all eigenvalues. This can only be the case if $M_\sigma=\varepsilon I_{d_\sigma}$. Let $v$ be a unit vector. Then by the unitarity of $\sigma$, we have
    \begin{equation}
        \frac{1}{\lvert S\rvert}\sum_{s\in S}\|(\sigma(s)-\varepsilon I_{d_\sigma})v\|^2=2(1-\varepsilon\langle v,M_\sigma v\rangle)=0.
    \end{equation}
    It follows that $\sigma(s)=\varepsilon I_{d_\sigma}$ for all $s\in S$. Hence, $\sigma(s_1\cdots s_k)=\varepsilon^k I_{d_\sigma}$, which means that every one-dimensional subspace is invariant, and so irreducibility forces $d_\sigma=1$.

    If $\varepsilon=1$, then $\sigma$ is trivial since $S$ generates $G$. If $\varepsilon=-1$, then $\sigma:G\to\{\pm1\}$ is a character with $\sigma\vert_S=-1$, so $H=\ker(\sigma)$ has index two and misses $S$, i.e. the graph is bipartite. Conversely, each case attains its bound. Indeed, the trivial character gives $M_\sigma=1$, and for $(G,S)$ with a bipartite Cayley graph, the sign character of the bipartition gives $M_\sigma=-1$. For the uniqueness, if $\sigma_1,\sigma_2$ are both $-1$ on $S$, then $\sigma_1\sigma_2$ is $1$ on $S$. But $S$ generates $G$, and so $\sigma_1\sigma_2$ is the trivial representation, establishing that $\sigma_2=\sigma_1^{-1}$. But $\sigma_1$ takes values in $\{\pm1\}$, so $\sigma_1^{-1}=\sigma_1$ and hence $\sigma_2=\sigma_1$.
\end{proof}

The ordering parameter is the average of the quadratic form of Proposition~\ref{prop:quadratic} over the block. Indeed, let $\{f_a\}_{a=1}^{d_\sigma^2}$ be an orthonormal basis of $\mathcal{A}_\sigma$. Since $\mathcal{L}$ acts on $\mathcal{A}_\sigma$ as $I_{d_\sigma}\otimes(I_{d_\sigma}-M_\sigma)$, its trace there is $d_\sigma\operatorname{Tr}[I_{d_\sigma}-M_\sigma]$, and dividing by $\dim(\mathcal{A}_\sigma)=d_{\sigma}^2$ gives
\begin{equation}
    \frac{1}{d_\sigma^2}\sum_{a=1}^{d_\sigma^2}\frac{1}{2\lvert G\rvert\lvert S\rvert}\sum_{g\in G}\sum_{s\in S}\lvert f_a(g)-f_a(gs)\rvert^2=\omega(\sigma).
\end{equation}
Equivalently, $\omega(\sigma)=\mathbb{E}\langle f,\mathcal{L}f\rangle$ for $f$ drawn uniformly from the unit sphere of $\mathcal{A}_\sigma$. For an individual unit $f\in\mathcal{A}_\sigma$, the energy $\langle f,\mathcal{L}f\rangle$ lies in $[\lambda_\textnormal{min}(I_{d_\sigma}-M_\sigma),\lambda_\textnormal{max}(I_{d_\sigma}-M_\sigma)]$, and the two agree with $\omega(\sigma)$ for every $f$ exactly when the block is scalar, which by Lemma~\ref{lem:conjugation-closed}, is the case when $S$ is closed under conjugation. 

\begin{lemma}\label{lem:conjugation-closed}
    If $S$ is closed under conjugation, then $I_{d_\sigma}\otimes M_\sigma=c_\sigma I_{d_\sigma^2}$ for some constant $c_\sigma$.
\end{lemma}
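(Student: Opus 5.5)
The plan is to show that $M_\sigma$ commutes with $\sigma(g)$ for every $g\in G$ and then invoke Schur's lemma. Since $S$ is closed under conjugation, for each fixed $g\in G$ the map $s\mapsto gsg^{-1}$ is a bijection of $S$ onto itself. It sends $S$ into $S$ by hypothesis, it is injective because conjugation is an automorphism of $G$, and $S$ is finite.

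Using this bijection, I would compute
\begin{equation}
\sigma(g)M_\sigma\sigma(g)^{-1}=\frac{1}{\lvert S\rvert}\sum_{s\in S}\sigma(gsg^{-1})=\frac{1}{\lvert S\rvert}\sum_{s'\in S}\sigma(s')=M_\sigma,
\end{equation}
where the first equality uses that $\sigma$ is a homomorphism and the second re-indexes the sum by $s'=gsg^{-1}$. Hence $M_\sigma$ is an intertwiner of $\sigma$ with itself. Since $\sigma$ is an irreducible representation over $\mathbb{C}$, Schur's lemma gives $M_\sigma=c_\sigma I_{d_\sigma}$ for some $c_\sigma\in\mathbb{C}$. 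Tensoring with the identity then yields $I_{d_\sigma}\otimes M_\sigma=c_\sigma I_{d_\sigma}\otimes I_{d_\sigma}=c_\sigma I_{d_\sigma^2}$, as claimed.

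As a consistency check, taking traces gives $c_\sigma=\operatorname{Tr}[M_\sigma]/d_\sigma=\frac{1}{d_\sigma\lvert S\rvert}\sum_{s\in S}\chi_\sigma(s)$, the normalized character sum mentioned in the introduction. Since $S$ is symmetric, $M_\sigma$ is Hermitian and $c_\sigma$ is real. This also recovers $\omega(\sigma)=1-c_\sigma$.

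No step is a genuine obstacle. The only point that needs care is that conjugation permutes $S$, so that the re-indexing is legitimate. It is also worth noting that Schur's lemma in the scalar form requires the ground field to be $\mathbb{C}$, which holds here because all representations are taken to be complex and unitary.
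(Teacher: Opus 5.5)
Your proof is correct and follows the paper's argument: conjugation closure lets you re-index the sum to get $\sigma(g)M_\sigma\sigma(g)^{-1}=M_\sigma$, and Schur's lemma then makes $M_\sigma$, and hence $I_{d_\sigma}\otimes M_\sigma$, a scalar. Your explicit check that $s\mapsto gsg^{-1}$ permutes $S$, and your identification $c_\sigma=\operatorname{Tr}[M_\sigma]/d_\sigma$, fill in details the paper leaves implicit.
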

\begin{proof}
    Since $S$ is closed under conjugation, we have
    \begin{equation}
            \sigma(h)M_\sigma\sigma(h)^{-1}=\frac{1}{\lvert S\rvert}\sum_{s\in S}\sigma(hsh^{-1})
            =\frac{1}{\lvert S\rvert}\sum_{s\in S}\sigma(s)=M_\sigma.
    \end{equation}
    Then by Schur's lemma, $I_{d_\sigma}\otimes M_\sigma=c_\sigma I_{d_\sigma^2}$ for some constant $c_\sigma$.
\end{proof}

The reader may note that the choice of a mean to obtain a scalar from each block of the Laplacian seems arbitrary when one could just as well use the maximum eigenvalue or even the minimum. However, the normalized trace is the unique linear basis-independent scalar summary of a block normalized by $F(I)=1$; every linear functional satisfying $F(UBU^\dagger)=F(B)$ for all unitaries $U$ is a scalar multiple of the trace. As we will see, the linearity hypothesis is necessary for Corollary~\ref{cor:gen-set} and Theorem~\ref{thm:characterization} to hold. 

Importantly, computing $\omega$ never requires forming $\mathcal{L}$ as a $\lvert G\rvert\times\lvert G\rvert$ matrix. Since $\operatorname{Tr}[M_\sigma]=\frac{1}{\lvert S\rvert}\sum_{s\in S}\chi_\sigma(s)$, the ordering parameter at $\sigma$ is determined by a sum of $\lvert S\rvert$ character values. We stress that the character sum formula requires no hypothesis on $S$ beyond symmetry. What closure under conjugation adds is that the block is scalar, so that the mean is a lossless summary of the block rather than a lossy one. For $S$ not closed under conjugation, the number $\omega(\sigma)$ is still available, but it discards the spread of the block.

Given two choices $(G,S)$ and $(H,T)$ of groups and generating sets defining smooth structures, we can construct a new smooth structure on the product group. We do so in two distinct ways which differ in how we construct the generating set for the product group.

\begin{proposition}
    Let $(G,S)$ and $(H,T)$ be pairs of groups and generating sets with ordering parameters $\omega_G$ and $\omega_H$, respectively. Assume $e\notin S$ and $e\notin T$. Then the ordering parameter corresponding to the group $G\times H$ with generating set $(S\times\{e\})\cup(\{e\}\times T)$ is given by
    \begin{equation}
        \omega_{G\times H}(\sigma\otimes\rho)=\frac{\lvert S\rvert}{\lvert S\rvert+\lvert T\rvert}\omega_G(\sigma)+\frac{\lvert T\rvert}{\lvert S\rvert+\lvert T\rvert}\omega_H(\rho).
    \end{equation}
\end{proposition}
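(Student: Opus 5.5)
The plan is to compute $\operatorname{Tr}[M_{\sigma\otimes\rho}]$ directly from the definition of the ordering parameter, using the standard fact that the irreps of $G\times H$ are exactly the outer tensor products $\sigma\otimes\rho$ with $\sigma\in\widehat{G}$, $\rho\in\widehat{H}$, acting by $(\sigma\otimes\rho)(g,h)=\sigma(g)\otimes\rho(h)$ and having dimension $d_\sigma d_\rho$. Before computing, I would record that $U:=(S\times\{e\})\cup(\{e\}\times T)$ is a legitimate input. It is symmetric because $S$ and $T$ are. It generates $G\times H$ because $(g,h)=(g,e)(e,h)$ and each factor is a product of elements of $S\times\{e\}$ or $\{e\}\times T$.

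The one point needing care, and the only place the hypothesis enters, is the size of $U$. Since $e\notin S$ and $e\notin T$, the two pieces $S\times\{e\}$ and $\{e\}\times T$ are disjoint. An element in both would have to be $(s,e)=(e,t)$, forcing $s=e\in S$. Hence the union is disjoint and $\lvert U\rvert=\lvert S\rvert+\lvert T\rvert$. Without this, the common element $(e,e)$ would be counted once rather than twice, and the weights would come out wrong.

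Next I would split the average defining $M_{\sigma\otimes\rho}$ over the two pieces:
\[
M_{\sigma\otimes\rho}=\frac{1}{\lvert S\rvert+\lvert T\rvert}\Bigl(\sum_{s\in S}\sigma(s)\otimes I_{d_\rho}+\sum_{t\in T}I_{d_\sigma}\otimes\rho(t)\Bigr)=\frac{\lvert S\rvert\, M_\sigma\otimes I_{d_\rho}+\lvert T\rvert\, I_{d_\sigma}\otimes M_\rho}{\lvert S\rvert+\lvert T\rvert}.
\]
Taking traces with $\operatorname{Tr}[X\otimes Y]=\operatorname{Tr}[X]\operatorname{Tr}[Y]$ and dividing by $d_\sigma d_\rho$ gives
\[
\frac{1}{d_\sigma d_\rho}\operatorname{Tr}[M_{\sigma\otimes\rho}]=\frac{\lvert S\rvert}{\lvert S\rvert+\lvert T\rvert}\,\frac{\operatorname{Tr}[M_\sigma]}{d_\sigma}+\frac{\lvert T\rvert}{\lvert S\rvert+\lvert T\rvert}\,\frac{\operatorname{Tr}[M_\rho]}{d_\rho}.
\]

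Finally, I would write $1=\frac{\lvert S\rvert}{\lvert S\rvert+\lvert T\rvert}+\frac{\lvert T\rvert}{\lvert S\rvert+\lvert T\rvert}$ and subtract the last display from it. This gives the claimed convex combination of $\omega_G(\sigma)$ and $\omega_H(\rho)$. Because the tensor products exhaust $\widehat{G\times H}$, the formula determines $\omega_{G\times H}$ on the entire dual. The argument is routine, and its only real obstacle is the disjointness count above.
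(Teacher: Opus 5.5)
Your proof is correct and is essentially the paper's argument. The paper also uses that the irreps of $G\times H$ are the $\sigma\otimes\rho$ of dimension $d_\sigma d_\rho$, splits the average over $(S\times\{e\})\cup(\{e\}\times T)$ into its two pieces, and rewrites $1$ as the convex combination of the two weights; it phrases the splitting via the characters $\chi_{\sigma\otimes\rho}(s,e)=d_\rho\chi_\sigma(s)$ and $\chi_{\sigma\otimes\rho}(e,t)=d_\sigma\chi_\rho(t)$ rather than via $M_\sigma\otimes I_{d_\rho}$ and a trace. Your checks that the new set is symmetric and generating, and that the union is disjoint so its size is $\lvert S\rvert+\lvert T\rvert$ (the one place the hypothesis $e\notin S$, $e\notin T$ enters), make explicit what the paper leaves implicit.
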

\begin{proof}
    The irreps of $G\times H$ are given by the tensor products of the irreps of $G$ and $H$, which have dimension $d_{\sigma\otimes\rho}=d_\sigma d_\rho$. Fixing an irrep $\sigma\otimes\rho$, we have
    \begin{equation}
    \begin{aligned}
        \omega_{G\times H}(\sigma\otimes\rho)
        &=1-\frac{1}{d_\sigma d_\rho}\frac{1}{\lvert S\rvert+\lvert T\rvert}\left(\sum_{s\in S}\chi_\sigma(s)d_\rho+\sum_{t\in T}d_\sigma\chi_\rho(t)\right)\\
        &=1-\frac{\lvert S\rvert}{\lvert S\rvert +\lvert T\rvert}(1-\omega_G(\sigma))-\frac{\lvert T\rvert}{\lvert S\rvert+\lvert T\rvert}(1-\omega_H(\rho))\\
        &=\frac{\lvert S\rvert}{\lvert S\rvert+\lvert T\rvert}\omega_G(\sigma)+\frac{\lvert T\rvert}{\lvert S\rvert+\lvert T\rvert}\omega_H(\rho).
    \end{aligned}
    \end{equation}
\end{proof}

\begin{proposition}
    Let $(G,S)$ and $(H,T)$ be pairs of groups and generating sets with ordering parameters $\omega_G$ and $\omega_H$, respectively. Then the ordering parameter corresponding to the group $G\times H$ with generating set $S\times T$ is given by
    \begin{equation}
        1-\omega_{G\times H}(\sigma\otimes\rho)=(1-\omega_G(\sigma))(1-\omega_H(\rho)).
    \end{equation}
\end{proposition}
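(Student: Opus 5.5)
The plan is to mirror the proof of the previous proposition and compute directly from the character-sum formula $\operatorname{Tr}[M_\sigma]=\frac{1}{\lvert S\rvert}\sum_{s\in S}\chi_\sigma(s)$. This formula holds for any symmetric set, as noted after the definition of the ordering parameter.

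First, check that the construction makes sense. $S\times T$ is symmetric because $(s,t)^{-1}=(s^{-1},t^{-1})$, and $\lvert S\times T\rvert=\lvert S\rvert\lvert T\rvert$. Since the formula for $\omega$ only uses symmetry of the index set, the identity is meaningful even without discussing generation. One caveat is that $S\times T$ need not generate $G\times H$; for example, with $\mathbb{Z}_2\times\mathbb{Z}_2$ and $S=T=\{1\}$ it generates only the diagonal. If the paper insists on a generating set, this should be stated as a hypothesis or remarked on, since the computation itself does not depend on it.

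Second, use that the irreps of $G\times H$ are the $\sigma\otimes\rho$. Their characters factor as $\chi_{\sigma\otimes\rho}(s,t)=\chi_\sigma(s)\chi_\rho(t)$, and their dimension is $d_\sigma d_\rho$. Then
\begin{equation}
1-\omega_{G\times H}(\sigma\otimes\rho)=\frac{1}{d_\sigma d_\rho}\frac{1}{\lvert S\rvert\lvert T\rvert}\sum_{s\in S}\sum_{t\in T}\chi_\sigma(s)\chi_\rho(t)=\left(\frac{1}{d_\sigma\lvert S\rvert}\sum_{s\in S}\chi_\sigma(s)\right)\left(\frac{1}{d_\rho\lvert T\rvert}\sum_{t\in T}\chi_\rho(t)\right),
\end{equation}
and each factor equals $1-\omega_G(\sigma)$ and $1-\omega_H(\rho)$ respectively.

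Equivalently, at the operator level, $M_{\sigma\otimes\rho}=M_\sigma\otimes M_\rho$, and the trace of a tensor product is the product of the traces. There is no real obstacle in this argument. The only point needing care is the generation caveat, which affects how the statement should be phrased but not the identity itself.
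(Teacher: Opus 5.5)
Your proof is correct and is the same computation the paper gives: write $1-\omega_{G\times H}(\sigma\otimes\rho)$ as a normalized character sum over $S\times T$, use $\chi_{\sigma\otimes\rho}(s,t)=\chi_\sigma(s)\chi_\rho(t)$ and $d_{\sigma\otimes\rho}=d_\sigma d_\rho$, and factor the double sum. Your remark that $S\times T$ need not generate $G\times H$ (e.g.\ it gives only the diagonal in $\mathbb{Z}_2\times\mathbb{Z}_2$ with $S=T=\{1\}$) is accurate and is not addressed in the paper; as you say, the identity itself uses only the symmetry of $S\times T$.
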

\begin{proof}
    The irreps of $G\times H$ are given by the tensor products of the irreps of $G$ and $H$, which have dimension $d_{\sigma\otimes\rho}=d_\sigma d_\rho$. Fixing an irrep $\sigma\otimes\rho$, we have
    \begin{equation}
        \begin{aligned}
            1-\omega_{G\times H}(\sigma\otimes\rho)
            &=\frac{1}{d_\sigma d_\rho\lvert S\rvert\lvert T\rvert}\sum_{s\in S}\sum_{t\in T}\chi_\sigma(s)\chi_\rho(t)\\
            &=\left(\frac{1}{d_\sigma\lvert S\rvert}\sum_{s\in S}\chi_\sigma(s)\right)\left(\frac{1}{d_\rho\lvert T\rvert}\sum_{t\in T}\chi_\rho(t)\right)\\
            &=(1-\omega_G(\sigma))(1-\omega_H(\rho)).
        \end{aligned}
    \end{equation}
\end{proof}

The quantity $\omega(\sigma)$ is not new as a number. When $S$ is a union of conjugacy classes, the spectrum of $A$ is classical, computed by Diaconis and Shahshahani in the study of random walks on finite groups~\cite{diaconis1988group,diaconis1981generating}, and the block diagonalization of Proposition~\ref{prop:block-diag} is a standard result. Such generating sets are called quasi-abelian in the signal processing literature, and Proposition~\ref{prop:block-diag} without that hypothesis is the Laplacian form of the eigendecomposition of a weighted Cayley graph~\cite{rockmore2002fast,beck2024frames}. Our use of this quantity differs in that we retain a value at every irrep and read it as an ordering, whereas a mixing time argument typical of the random walk literature extracts a single eigenvalue and discards the remainder. Ordering frequencies by a Laplacian eigenvalue is likewise routine in graph signal processing, where a low pass filter on a graph is by definition the suppression of the large eigenvalues of $\mathcal{L}$~\cite{shuman2013emerging}, and it is the working definition of smoothness in landscape theory, where a correlated landscape is one whose amplitude spectrum sits on the low eigenvalues~\cite{stadler2002fitness}. The ordering parameter we define is a specialization of this concept to Cayley graphs, for which the corresponding Laplacian is block diagonal over the dual. The block at irrep $\sigma$ is determined by $M_\sigma$ alone, and the eigenvalue ordering therefore descends to the dual $\widehat{G}$, a set that exists prior to any diagonalization. As a consequence, the ordering parameter is a function $\omega:\widehat{G}\to\mathbb{R}$ that depends only on the pair $(G,S)$. For a generic graph, the Laplacian spectrum is a sorted list of real numbers with no domain attached, whereas Proposition~\ref{prop:block-diag} delivers the spectrum already partitioned by $\widehat{G}$, a set determined by $G$ alone. The ordering is therefore a function on a fixed domain, which allows the orderings induced by different generating sets to be compared pointwise.

Although the use of the Laplacian is natural due in part to its interpretation as a quantification of the average change in $f$ over all incremental steps in the Cayley graph, we have not yet established that this is the only such choice one can make to construct an ordering on $\widehat{G}$. In fact, there is some leeway in this choice, and we now determine exactly how much. 

\section{Characterization of the Ordering} \label{sec:characterization}
Call an operator $T:L^2(G)\to L^2(G)$ \emph{admissible} if it is Hermitian, commutes with all left translations, annihilates the constant functions, and satisfies $\langle\overline{f},T\overline{f}\rangle=\langle f,Tf\rangle$ for every $f\in L^2(G)$. These conditions are natural axioms for an operator replacing the Laplacian in our construction. Hermiticity makes each individual $\sigma$-block Hermitian so that its eigenvalues are real and their mean is a real number, left invariance says that no point of $G$ is distinguished, annihilation of the constant functions records that constant functions are considered smooth, and the last condition asks that the energy assigned to a function not depend on an overall choice of complex conjugate, which the quadratic form of Proposition~\ref{prop:quadratic} manifestly satisfies. Given an admissible $T$, we write $\omega_T(\sigma)$ for the mean of the eigenvalues of $T$ on the block $\mathcal{A}_\sigma$, extending the definition of $\omega(\sigma)$ from $\mathcal{L}$ to $T$.

Let $\mathcal{K}$ denote the collection of sets $C\cup C^{-1}$ as $C$ ranges over the nontrivial conjugacy classes of $G$. Each $K\in\mathcal{K}$ is symmetric, so $\mathcal{L}_K:=I-A_K$ is the Laplacian of the Cayley graph on $G$ generated by $K$ whenever $K$ generates, and is the Laplacian of a disconnected Cayley graph otherwise. We write $\omega_K$ for the associated ordering parameter. Finally, for $\varphi\in L^2(G)$, let
\begin{equation}
    (T_\varphi f)(g)=\sum_{h\in G}\varphi(g^{-1}h)f(h)
\end{equation}
so that $\mathcal{L}=T_{\delta_e-\mu_S}$, where $\mu_S$ is the uniform probability measure on $S$. We begin by identifying the admissible operators concretely, which amounts to the standard observation that left invariance forces convolution~\cite{terras1999fourier}.

\begin{lemma}\label{lem:admissible-form}
    An operator $T$ on $L^2(G)$ is admissible if and only if $T=T_\varphi$ for a real valued $\varphi$ satisfying $\varphi(x^{-1})=\varphi(x)$ and $\sum_{x\in G}\varphi(x)=0$.
\end{lemma}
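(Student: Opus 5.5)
The plan is to prove the forward direction by extracting a kernel $\varphi$ from $T$ using left invariance, and then to translate each of the remaining three axioms into a condition on $\varphi$. Each translation is reversible, which gives the converse. Write $(\lambda_k f)(g)=f(k^{-1}g)$ for left translation, so that $\lambda_k\delta_x=\delta_{kx}$.

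\textbf{Step 1: left invariance forces convolution.} Given an admissible $T$, define $\varphi(x):=(T\delta_x)(e)$. Since $T$ commutes with $\lambda_{g^{-1}}$,
\begin{equation}
    (T\delta_x)(g)=(\lambda_{g^{-1}}T\delta_x)(e)=(T\lambda_{g^{-1}}\delta_x)(e)=(T\delta_{g^{-1}x})(e)=\varphi(g^{-1}x).
\end{equation}
On the other hand, $(T_\varphi\delta_x)(g)=\varphi(g^{-1}x)$ directly from the definition. Both operators are linear and agree on the delta basis, so $T=T_\varphi$. Conversely, every $T_\varphi$ commutes with left translations by the substitution $h\mapsto kh$ in the defining sum.

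\textbf{Step 2: Hermiticity and annihilation of constants.} With respect to the normalized inner product, the matrix kernel of $T_\varphi$ is $K(g,h)=|G|\,\varphi(g^{-1}h)$. Hermiticity is therefore equivalent to $\varphi(g^{-1}h)=\overline{\varphi(h^{-1}g)}$ for all $g,h$. Setting $x=g^{-1}h$, this is exactly $\varphi(x^{-1})=\overline{\varphi(x)}$. Annihilating the constants means $\sum_h\varphi(g^{-1}h)=0$ for every $g$. Reindexing, this is the single condition $\sum_x\varphi(x)=0$.

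\textbf{Step 3: the conjugation axiom forces $\varphi$ real.} Expanding the quadratic form gives
\begin{equation}
    \langle\overline{f},T_\varphi\overline{f}\rangle=\frac{1}{|G|}\sum_{g,h}f(g)\varphi(g^{-1}h)\overline{f(h)}=\overline{\langle f,T_{\overline{\varphi}}f\rangle}.
\end{equation}
Under the Hermitian condition from Step 2, $\overline{\varphi}$ satisfies the same symmetry, so $T_{\overline{\varphi}}$ is also Hermitian. Its quadratic form is therefore real, and the axiom becomes $\langle f,T_{\overline\varphi}f\rangle=\langle f,T_\varphi f\rangle$ for all $f$.

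The key fact is that, on a complex inner product space, an operator is determined by its quadratic form, by the polarization identity. Hence $T_{\overline\varphi}=T_\varphi$. Applying this to $\delta_x$ and evaluating at $e$, as in Step 1, gives $\overline\varphi=\varphi$. Combined with Step 2, $\varphi$ is real and $\varphi(x^{-1})=\varphi(x)$.

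\textbf{Converse.} Each step above is an equivalence. A real, inversion-symmetric $\varphi$ with total sum zero gives a $T_\varphi$ that is left invariant, Hermitian and kills constants. Since $\overline\varphi=\varphi$, the identity in Step 3 together with the reality of the Hermitian form gives the conjugation condition.

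\textbf{Main obstacle.} The step needing care is Step 3. It requires complex polarization, which is valid because $L^2(G)$ is a complex space. It also requires noting that $T_{\overline\varphi}$ is Hermitian, so that its quadratic form is real and the complex conjugate on the right-hand side can be removed.
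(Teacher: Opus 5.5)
Your proof is correct and follows essentially the same route as the paper: left invariance forces the kernel to have the form $\varphi(g^{-1}h)$ with $\varphi(x)=(T\delta_x)(e)$ (the paper's $\kappa(e,x)$), Hermiticity gives $\varphi(x^{-1})=\overline{\varphi(x)}$, and the conjugation axiom reduces to $T_{\overline{\varphi}}=T_\varphi$. Your explicit appeal to complex polarization spells out a step the paper compresses into ``the fourth axiom holds if and only if $\overline{T}=T$.''
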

\begin{proof}
    Let $T$ have matrix $\kappa$ so that $(Tf)(g)=\sum_{h\in G}\kappa(g,h)f(h)$, and write $(L_af)(g)=f(a^{-1}g)$. Then
    \begin{equation}\label{eq:left}
        (L_aTf)(g)=\sum_{h\in G}\kappa(a^{-1}g,h)f(h)
    \end{equation}
    and
    \begin{equation}\label{eq:right}
        (TL_af)(g)=\sum_{h\in G}\kappa(g,ah)f(h).
    \end{equation}
    Comparing \eqref{eq:left} with \eqref{eq:right} shows that $T$ commutes with every left translation if and only if $\kappa(a^{-1}g,h)=\kappa(g,ah)$ for all $a,g,h\in G$. Taking $a=g$ gives $\kappa(g,gh)=\kappa(e,h)$, so that $\kappa(g,h)=\varphi(g^{-1}h)$ with $\varphi(x):=\kappa(e,x)$, and hence $T=T_\varphi$. The converse is the same computation read backwards.

    A direct computation gives $T_\varphi^\dagger=T_{\varphi^*}$, where $\varphi^*(x)=\overline{\varphi(x^{-1})}$, so $T_\varphi$ is Hermitian if and only if $\varphi(x)=\overline{\varphi(x^{-1})}$. Writing $\overline{T}$ for the operator with matrix $\overline{\kappa}$, we have
    \begin{equation}
        \overline{\langle\overline{f},T\overline{f}\rangle}=\langle f,\overline{T}f\rangle,
    \end{equation}
    and Hermiticity makes both quadratic forms real. Thus, the fourth axiom of admissibility holds if and only if $\overline{T}=T$; that is, if and only if $\varphi$ is real valued. Combining the two conditions, $\varphi$ is real and satisfies $\varphi(x^{-1})=\varphi(x)$. Finally, $T_\varphi1=(\sum_x\varphi(x))1$, so $T$ annihilates the constant functions if and only if $\sum_x\varphi(x)=0$.
\end{proof}

Each $\mathcal{L}_K$ is of this form since $\delta_e-\mu_K$ is real, is invariant under inversion due to symmetry of $K$, and has total mass zero. The same is true of $\mathcal{L}$ itself. Moreover, the block structure of Proposition~\ref{prop:block-diag} persists for every admissible operator.

\begin{proposition}\label{prop:admissible-block}
    Let $T=T_\varphi$ be admissible. Then $T$ acts on $\mathcal{A}_\sigma$ as $I_{d_\sigma}\otimes M_\sigma^\varphi$, where $M_\sigma^\varphi:=\sum_{x\in G}\varphi(x)\sigma(x)$ is Hermitian, and
    \begin{equation}\label{eq:omegaT}
        \omega_T(\sigma)=\frac{1}{d_\sigma}\operatorname{Tr}[M_\sigma^\varphi]=\frac{1}{d_\sigma}\sum_{x\in G}\varphi(x)\chi_\sigma(x).
    \end{equation}
    Moreover, we have
    \begin{equation}
        \omega_{T_\varphi}=\omega_{T_{\varphi^\natural}},
    \end{equation}
    where $\varphi^\natural(x)=\frac{1}{\lvert G\rvert}\sum_{a\in G}\varphi(axa^{-1})$.
\end{proposition}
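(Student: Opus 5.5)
The plan is to repeat the computation of Proposition~\ref{prop:block-diag} with the uniform weight $\mu_S$ replaced by a general $\varphi$, and then use that characters are class functions. First I would rewrite the convolution operator by the substitution $x=g^{-1}h$:
\begin{equation}
    (T_\varphi f)(g)=\sum_{x\in G}\varphi(x)f(gx).
\end{equation}
Applying this to a matrix coefficient $\sigma_{ij}$ and using that $\sigma$ is a homomorphism gives
\begin{equation}
    (T_\varphi\sigma_{ij})(g)=\sum_{k=1}^{d_\sigma}\sigma_{ik}(g)\,(M_\sigma^\varphi)_{kj}.
\end{equation}
Exactly as in Proposition~\ref{prop:block-diag}, $T_\varphi$ preserves $\mathcal{A}_\sigma$, fixes the index $i$, and mixes $j$ through $M_\sigma^\varphi$. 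Hence it acts on $\mathcal{A}_\sigma$ as $I_{d_\sigma}\otimes M_\sigma^\varphi$. As a consistency check, $\varphi=\delta_e-\mu_S$ gives $M^\varphi_\sigma=I_{d_\sigma}-M_\sigma$, which recovers Proposition~\ref{prop:block-diag}.

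For Hermiticity I would invoke Lemma~\ref{lem:admissible-form}: admissibility means $\varphi$ is real and $\varphi(x^{-1})=\varphi(x)$. Unitarity gives $\sigma(x)^\dagger=\sigma(x^{-1})$, so
\begin{equation}
    (M_\sigma^\varphi)^\dagger=\sum_{x}\varphi(x)\sigma(x^{-1})=\sum_x\varphi(x^{-1})\sigma(x)=M_\sigma^\varphi,
\end{equation}
after reindexing $x\mapsto x^{-1}$. The eigenvalues of $I_{d_\sigma}\otimes M_\sigma^\varphi$ are those of $M_\sigma^\varphi$, each with multiplicity $d_\sigma$. Their mean is therefore $d_\sigma\operatorname{Tr}[M_\sigma^\varphi]/d_\sigma^2=\operatorname{Tr}[M^\varphi_\sigma]/d_\sigma$. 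Linearity of the trace then gives the character formula \eqref{eq:omegaT}.

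For the final claim I would first check that $T_{\varphi^\natural}$ is itself admissible, so that $\omega_{T_{\varphi^\natural}}$ is defined. Averaging preserves realness. Inversion symmetry is preserved because $(axa^{-1})^{-1}=ax^{-1}a^{-1}$. The total mass is preserved because $x\mapsto axa^{-1}$ is a bijection of $G$, so $\sum_x\varphi^\natural(x)=\sum_x\varphi(x)=0$. Then I would substitute into \eqref{eq:omegaT}:
\begin{equation}
    \sum_x\varphi^\natural(x)\chi_\sigma(x)=\frac{1}{\lvert G\rvert}\sum_{a}\sum_x\varphi(axa^{-1})\chi_\sigma(x).
\end{equation}
For each fixed $a$, I reindex by $y=axa^{-1}$ and use the class-function property $\chi_\sigma(x)=\chi_\sigma(y)$. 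Each of the $\lvert G\rvert$ inner sums then equals $\sum_y\varphi(y)\chi_\sigma(y)$, which gives the equality.

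No step is a serious obstacle. The only places needing care are the change of variables that puts $T_\varphi$ in right-multiplication form, so that the homomorphism property produces $\sigma(g)M^\varphi_\sigma$ rather than $M^\varphi_\sigma\sigma(g)$, and verifying that $\varphi^\natural$ is admissible.
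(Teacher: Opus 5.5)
Your proposal is correct and follows the paper's proof essentially step for step. Like the paper, you use the right-multiplication rewrite $(T_\varphi f)(g)=\sum_x\varphi(x)f(gx)$ together with the homomorphism property to get the block structure, the reindexing $x\mapsto x^{-1}$ (with $\varphi$ real and inversion invariant) for Hermiticity, and the class-function averaging to obtain $\omega_{T_\varphi}=\omega_{T_{\varphi^\natural}}$. Your explicit check that $T_{\varphi^\natural}$ is again admissible, so that $\omega_{T_{\varphi^\natural}}$ is defined, is a small addition that the paper leaves implicit here and only records in the proof of Theorem~\ref{thm:characterization}.
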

\begin{proof}
    As in the proof of Proposition~\ref{prop:block-diag}, we have
    \begin{equation}
        (T_\varphi\sigma_{ij})(g)=\sum_{x\in G}\varphi(x)\sigma_{ij}(gx)=\sum_{k=1}^{d_\sigma}\sigma_{ik}(g)(M_\sigma^\varphi)_{kj},
    \end{equation}
    so $T_\varphi$ preserves $\mathcal{A}_\sigma$ and acts there as $I_{d_\sigma}\otimes M_\sigma^\varphi$. Hermiticity of the block follows from Lemma~\ref{lem:admissible-form} and unitarity of $\sigma$ since
    \begin{equation}
        (M_\sigma^\varphi)^\dagger=\sum_{x\in G}\varphi(x)\sigma(x)^{-1}=\sum_{y\in G}\varphi(y^{-1})\sigma(y)=M_\sigma^\varphi.
    \end{equation}
    Taking the mean of the $d_\sigma$ eigenvalues of the block gives \eqref{eq:omegaT}. The final claim follows by averaging the substitution $x\mapsto axa^{-1}$ over $a\in G$ in \eqref{eq:omegaT}, using that $\chi_\sigma$ is a class function.
\end{proof}

Taking $\varphi=\delta_e-\mu_S$ in \eqref{eq:omegaT} recovers $\omega(\sigma)=1-\operatorname{Tr}[M_\sigma]/d_\sigma$, so the ordering parameter of Proposition~\ref{prop:block-diag} is the case $T=\mathcal{L}$. Proposition~\ref{prop:admissible-block} also further justifies the choice of the mean in the definition of the ordering parameter because the operator $T_{\varphi^\natural}$ is diagonal in the irrep basis when $\varphi=\delta_e-\mu_S$, as we show in the next corollary. Thus, an ordering induced by a non-diagonal Cayley-Laplacian is not just a lossy description of its spectrum; rather, it takes into account the exact spectrum of the conjugation-symmetrized weighted Cayley-Laplacian.

\begin{corollary}\label{cor:diagonal-laplacian}
    Let $\varphi=\delta_e-\mu_S$ so that $T_\varphi=\mathcal{L}$. Then $\mathcal{L}^\natural:=T_{\varphi^\natural}$ acts on $\mathcal{A}_\sigma$ as $\omega(\sigma) I_{d_\sigma^2}$.
\end{corollary}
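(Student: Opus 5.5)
Write $\varphi=\delta_e-\mu_S$. The plan is to show that $\varphi^\natural$ is an admissible class function. Once that is established, Schur's lemma forces every block of $T_{\varphi^\natural}$ to be scalar, and the value of that scalar can be read off from Proposition~\ref{prop:admissible-block}.

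First, compute $\varphi^\natural$ explicitly:
\begin{equation}
\varphi^\natural(x)=\delta_e(x)-\frac{1}{\lvert G\rvert}\sum_{a\in G}\mu_S(axa^{-1}).
\end{equation}
Here $\delta_e^\natural=\delta_e$ because $e$ is central, and $\mu_S^\natural$ is the average of $\mu_S$ over conjugates. The function $\varphi^\natural$ inherits realness, inversion invariance (since $axa^{-1}$ inverts to $ax^{-1}a^{-1}$) and total mass zero from $\varphi$. By Lemma~\ref{lem:admissible-form}, $T_{\varphi^\natural}$ is therefore admissible. Moreover $\varphi^\natural$ is a class function: replacing $x$ by $bxb^{-1}$ merely reindexes the sum over $a$.

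Next, apply Proposition~\ref{prop:admissible-block} to $T_{\varphi^\natural}$. It acts on $\mathcal{A}_\sigma$ as $I_{d_\sigma}\otimes M_\sigma^{\varphi^\natural}$, where
\begin{equation}
M_\sigma^{\varphi^\natural}=\sum_{x}\varphi^\natural(x)\sigma(x).
\end{equation}
Because $\varphi^\natural$ is a class function, we get
\begin{equation}
\sigma(h)M_\sigma^{\varphi^\natural}\sigma(h)^{-1}=\sum_x\varphi^\natural(x)\sigma(hxh^{-1})=M_\sigma^{\varphi^\natural}.
\end{equation}
This is exactly the argument of Lemma~\ref{lem:conjugation-closed}, with the weight $\varphi^\natural$ in place of the uniform weight on a conjugation-closed $S$. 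Schur's lemma then gives $M_\sigma^{\varphi^\natural}=c_\sigma I_{d_\sigma}$. Taking traces yields $c_\sigma=\omega_{T_{\varphi^\natural}}(\sigma)$, and by the final claim of Proposition~\ref{prop:admissible-block} this equals $\omega_{T_\varphi}(\sigma)=\omega_{\mathcal{L}}(\sigma)=\omega(\sigma)$. Hence the block is $I_{d_\sigma}\otimes\omega(\sigma)I_{d_\sigma}=\omega(\sigma)I_{d_\sigma^2}$.

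Every step is routine, so I do not expect a genuine obstacle. The one point needing care is the sign and normalization convention. Proposition~\ref{prop:admissible-block} writes the block as $M^\varphi_\sigma$ directly, not as $I-M$, so with $\varphi=\delta_e-\mu_S$ one has $M^\varphi_\sigma=I-M_\sigma$. This convention must be tracked consistently so that the trace computes $\omega(\sigma)$ and not $1-\omega(\sigma)$. I would check it explicitly by taking the normalized trace of $M_\sigma^{\varphi}=I-M_\sigma$, which gives $1-\operatorname{Tr}[M_\sigma]/d_\sigma=\omega(\sigma)$.
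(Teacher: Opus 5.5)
Your proposal is correct and follows the paper's proof essentially step for step. The shared route is: the block form from Proposition~\ref{prop:admissible-block}, commutation of $M_\sigma^{\varphi^\natural}$ with every $\sigma(h)$ because $\varphi^\natural$ is a class function, Schur's lemma, and identification of the scalar through $\omega_{T_\varphi}=\omega_{T_{\varphi^\natural}}$. Your extra checks that $\varphi^\natural$ is admissible and that the $I-M_\sigma$ sign convention gives $\omega(\sigma)$ are not spelled out in the paper, but they are correct and harmless.
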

\begin{proof}
    By Proposition~\ref{prop:admissible-block}, $T_{\varphi^\natural}$ acts on $\mathcal{A}_\sigma$ as $I_{d_\sigma}\otimes M_\sigma$ with
    \begin{equation}
        M_\sigma=\sum_{x\in G}\varphi^\natural(x)\sigma(x).
    \end{equation}
    Then for all $g\in G$ we have
    \begin{equation}
        \sigma(g)M_\sigma\sigma(g^{-1})=\sum_{x\in G}\varphi^\natural(x)\sigma(gxg^{-1})=\sum_{x\in G}\varphi^\natural(g^{-1}xg)\sigma(x).
    \end{equation}
    But $\varphi^\natural$ is a class function, and so $\sigma(g)M_\sigma=M_\sigma\sigma(g)$ for all $g\in G$. Thus, by Schur's lemma, $M_\sigma=c_\sigma I$ for some constant $c_\sigma$. Since $\omega_{T_\varphi}=\omega_{T_\varphi^\natural}$, we have $c_\sigma=\omega(\sigma)$.
\end{proof}

Combining Proposition~\ref{prop:admissible-block} and Corollary~\ref{cor:diagonal-laplacian} shows that every ordering parameter comes from a diagonal admissible operator. We now show that no admissible operator produces an ordering outside the real span of those coming from the Laplacians $\mathcal{L}_K$.

\begin{theorem}\label{thm:characterization}
    The Laplacians $\{\mathcal{L}_K\}_{K\in\mathcal{K}}$ are linearly independent, and for every admissible $T$, there is an $\alpha\in\mathbb{R}^{\lvert\mathcal{K}\rvert}$ with
    \begin{equation}\label{eq:admissible-scalar}
        \omega_T=\sum_{K\in\mathcal{K}}\alpha_K\omega_K.
    \end{equation}
\end{theorem}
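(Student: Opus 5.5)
The plan is to work at the level of the convolution kernels. Lemma~\ref{lem:admissible-form} identifies admissible operators with real, inversion-symmetric kernels of total mass zero, and Proposition~\ref{prop:admissible-block} says $\omega_T$ depends only on the class-averaged kernel $\varphi^\natural$. Both claims should therefore reduce to linear algebra in the space $V$ of real class functions that are inversion-symmetric and have total mass zero.

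\emph{Linear independence.} Write $\mathcal{L}_K=T_{\psi_K}$ with $\psi_K=\delta_e-\mu_K$. The map $\varphi\mapsto T_\varphi$ is injective, since $\varphi(x)=(T_\varphi\delta_x)(e)$ up to the evident normalization. Hence it suffices to show that the kernels $\psi_K$ are linearly independent.

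Suppose $\sum_K c_K\psi_K=0$ and evaluate at a point $x\neq e$ lying in a fixed $K_0\in\mathcal{K}$. The sets in $\mathcal{K}$ are pairwise disjoint: each is a union of at most two conjugacy classes, and two such sets either coincide or share no class, because $C\cup C^{-1}$ is determined by the orbit of $C$ under inversion. So only $\mu_{K_0}$ is nonzero at $x$, and the sum evaluates to $-c_{K_0}/\lvert K_0\rvert$. This forces $c_{K_0}=0$.

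\emph{Spanning.} Let $T=T_\varphi$ be admissible. By Proposition~\ref{prop:admissible-block}, $\omega_T=\omega_{T_{\varphi^\natural}}$, and $\varphi^\natural$ is a real class function. It remains inversion-symmetric, because $a(x^{-1})a^{-1}=(axa^{-1})^{-1}$, and its total mass is still zero. A real, inversion-symmetric class function is constant on each $K\in\mathcal{K}$ and on $\{e\}$. So we can write
\[
\varphi^\natural=\beta_e\delta_e+\sum_K\beta_K\mathbf 1_K .
\]
The mass-zero condition reads $\beta_e+\sum_K\beta_K\lvert K\rvert=0$. Setting $\alpha_K:=-\beta_K\lvert K\rvert$, we get $\mathbf 1_K=\lvert K\rvert\mu_K$ and $\beta_e=\sum_K\alpha_K$, which gives
\[
\varphi^\natural=\sum_K\alpha_K(\delta_e-\mu_K)=\sum_K\alpha_K\psi_K .
\]

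Equation~\eqref{eq:omegaT} shows that $\varphi\mapsto\omega_{T_\varphi}$ is linear. Therefore
\[
\omega_T=\omega_{T_{\varphi^\natural}}=\sum_K\alpha_K\omega_{T_{\psi_K}}=\sum_K\alpha_K\omega_K ,
\]
with $\alpha\in\mathbb{R}^{\lvert\mathcal{K}\rvert}$ because $\varphi$ is real.

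\emph{Expected obstacle.} There is no deep step. The main care points are confirming that $\mathcal{K}$ partitions $G\setminus\{e\}$, which is what makes both the independence argument and the decomposition of $\varphi^\natural$ go through, and getting the correct sign and normalization of $\alpha_K$ via the mass-zero condition.
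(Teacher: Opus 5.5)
Your proposal is correct and follows essentially the same route as the paper. You reduce to the class-averaged kernel $\varphi^\natural$, expand it over the partition of $G$ into $\{e\}$ and the sets $K\in\mathcal{K}$, use the mass-zero condition to write it as $\sum_K\alpha_K(\delta_e-\mu_K)$, and conclude by linearity of $\varphi\mapsto\omega_{T_\varphi}$; independence comes from evaluating the kernel identity at a point $x\neq e$. The only differences are cosmetic: you expand in the indicators $\mathbf{1}_K$ rather than the measures $\mu_K$, and you argue independence at the kernel level via injectivity of $\varphi\mapsto T_\varphi$, which the paper uses implicitly when it ``compares the associated functions.''
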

\begin{proof}
    Let $T=T_\varphi$ be admissible. By Lemma~\ref{lem:admissible-form}, the function $\varphi^\natural$ is again real, inversion invariant, and of total mass zero, and it is in addition a class function, hence constant on $\{e\}$ and on each member of $\mathcal{K}$. Since these sets partition $G$,
    \begin{equation}
        \varphi^\natural=\lambda_e\delta_e+\sum_{K\in\mathcal{K}}\lambda_K\mu_K
    \end{equation}
    with $\lambda_e,\lambda_K\in\mathbb{R}$, and vanishing total mass forces $\lambda_e+\sum_{K\in\mathcal{K}}\lambda_K=0$. Substituting $A_K=I-\mathcal{L}_K$ then gives
    \begin{equation}
        T_{\varphi^\natural}=\left(\lambda_e+\sum_{K\in\mathcal{K}}\lambda_K\right)I-\sum_{K\in\mathcal{K}}\lambda_K\mathcal{L}_K=-\sum_{K\in\mathcal{K}}\lambda_K\mathcal{L}_K,
    \end{equation}
    and \eqref{eq:admissible-scalar} follows from Proposition~\ref{prop:admissible-block} with $\alpha_K=-\lambda_K$.

    Suppose now that $\sum_K\alpha_K\mathcal{L}_K=0$ for some real $\alpha$. Then $\sum_K\alpha_KA_K=(\sum_K\alpha_K)I$, and comparing the associated functions gives $\sum_K\alpha_K\mu_K=(\sum_K\alpha_K)\delta_e$. Evaluating at any $x\neq e$, which lies in exactly one member $K$ of $\mathcal{K}$, gives $\alpha_K/\lvert K\rvert=0$. Hence, every $\alpha_K$ vanishes, and the $\mathcal{L}_K$ are linearly independent.
\end{proof}

The theorem does not select a unique ordering. Rather, it identifies the $\omega_K$ as a canonical basis for the space of orderings arising from admissible operators. The next result identifies exactly which orderings arise and shows that this space is unconstrained in any way that a candidate ordering would not already satisfy. Reality is forced by any construction taking eigenvalue means of a Hermitian operator, agreement on conjugate pairs by any construction not distinguishing a representation from its dual, and vanishing at the trivial representation is a normalization. Admissibility therefore supplies coordinates rather than a restriction, in which an ordering becomes a weight vector on $\mathcal{K}$ recording which conjugacy classes count as small steps and how much they count.

\begin{theorem}\label{thm:image}
    The map $T\mapsto\omega_T$ carries the admissible operators onto the real valued functions on $\widehat{G}$ which vanish at the trivial representation and agree on complex conjugate pairs. Its kernel is $\{T_\varphi:\varphi^\natural=0\}$, and it restricts to a linear bijection on $\operatorname{span}_\mathbb{R}\{\mathcal{L}_K\}$, which is exactly the set of admissible operators commuting with all right translations.
\end{theorem}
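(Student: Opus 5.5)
The plan is to prove linearity first, then the inclusion of the image, then surjectivity by an explicit inverse built from characters, and finally to read off the kernel and the right-invariant subspace.

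\emph{Linearity and the image inclusion.} By Lemma~\ref{lem:admissible-form} every admissible $T$ is $T_\varphi$ with $\varphi$ real, inversion invariant and of total mass zero. By Proposition~\ref{prop:admissible-block}, $\omega_{T_\varphi}(\sigma)=\frac{1}{d_\sigma}\sum_x\varphi(x)\chi_\sigma(x)$, which is linear in $\varphi$, so $T\mapsto\omega_T$ is linear. I will check the three properties of the image directly from this formula.
\begin{itemize}
\item Reality: pair $x$ with $x^{-1}$ and use $\chi_\sigma(x^{-1})=\overline{\chi_\sigma(x)}$ together with $\varphi(x^{-1})=\varphi(x)\in\mathbb{R}$.
\item Vanishing at the trivial representation: this is exactly the condition $\sum_x\varphi(x)=0$.
\item Agreement on conjugate pairs: use $\chi_{\overline\sigma}=\overline{\chi_\sigma}$, so $\omega_T(\overline\sigma)=\overline{\omega_T(\sigma)}=\omega_T(\sigma)$ by reality.
\end{itemize}

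\emph{Surjectivity.} Rather than counting dimensions, which would need Brauer's permutation lemma to match conjugate pairs in $\widehat G$ with inversion orbits of classes, I would write down a preimage explicitly. Given a real $\omega$ on $\widehat G$ with $\omega(\mathbf 1)=0$ and $\omega(\overline\rho)=\omega(\rho)$, set
\begin{equation*}
\psi(x)=\frac{1}{\lvert G\rvert}\sum_{\rho\in\widehat G}d_\rho\,\omega(\rho)\,\overline{\chi_\rho(x)}.
\end{equation*}
Character orthogonality, in the form $\sum_x\overline{\chi_\rho(x)}\chi_\sigma(x)=\lvert G\rvert\delta_{\rho\sigma}$, gives $\frac1{d_\sigma}\sum_x\psi(x)\chi_\sigma(x)=\omega(\sigma)$. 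I then verify that $\psi$ is admissible.
\begin{itemize}
\item $\psi$ is real: pair the $\rho$ and $\overline\rho$ terms, which have equal weights $d_\rho\omega(\rho)$ and complex-conjugate characters.
\item $\psi$ is inversion invariant: $\overline{\chi_\rho(x^{-1})}=\chi_\rho(x)=\overline{\chi_{\overline\rho}(x)}$, followed by the same pairing.
\item $\psi$ has total mass zero: $\sum_x\overline{\chi_\rho(x)}=\lvert G\rvert\delta_{\rho,\mathbf 1}$ and $\omega(\mathbf 1)=0$.
\end{itemize}
So $T_\psi$ is admissible by Lemma~\ref{lem:admissible-form} and $\omega_{T_\psi}=\omega$. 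Moreover $\psi$ is a class function, so the proof of Theorem~\ref{thm:characterization} shows $T_\psi\in\operatorname{span}_\mathbb{R}\{\mathcal{L}_K\}$. This already gives surjectivity from that span.

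\emph{Kernel, bijection on the span, and right invariance.}
\begin{itemize}
\item Kernel: by Proposition~\ref{prop:admissible-block}, $\omega_{T_\varphi}=\omega_{T_{\varphi^\natural}}$. The class function $\varphi^\natural$ has all pairings with the characters equal to zero exactly when $\omega_{T_\varphi}=0$. Characters form a basis of class functions, so this happens if and only if $\varphi^\natural=0$.
\item Injectivity on the span: an element $\sum_K\alpha_K\mathcal{L}_K=T_\varphi$ has $\varphi=-\sum_K\alpha_K(\mu_K-\delta_e)$, which is already a class function, so $\varphi^\natural=\varphi$. If such an element lies in the kernel, then $\varphi=0$, so the operator vanishes; the linear independence in Theorem~\ref{thm:characterization} then gives $\alpha=0$.
\item Right invariance: $T_\varphi$ commutes with all right translations if and only if $\varphi(axa^{-1})=\varphi(x)$ for all $a,x$. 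This follows by the same kernel computation as in Lemma~\ref{lem:admissible-form}, now with $\kappa(ga,ha)=\kappa(g,h)$. Admissible class functions are exactly the combinations $\lambda_e\delta_e+\sum_K\lambda_K\mu_K$ with zero mass, and the proof of Theorem~\ref{thm:characterization} identifies these with $\operatorname{span}_\mathbb{R}\{\mathcal{L}_K\}$.
\end{itemize}

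The main obstacle is surjectivity. The explicit preimage $\psi$ avoids the orbit-counting argument, and the only care needed is in the reality and inversion-invariance checks, where the pairing of $\rho$ with $\overline\rho$ must be done correctly.
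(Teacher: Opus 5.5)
Your proposal is correct and follows essentially the paper's route: you reduce to $\varphi^\natural$ via Proposition~\ref{prop:admissible-block}, use that the irreducible characters form a basis of the class functions, and combine the right-translation computation with the proof of Theorem~\ref{thm:characterization} to identify the conjugation-invariant admissible operators with $\operatorname{span}_\mathbb{R}\{\mathcal{L}_K\}$. The only difference is presentational: you write down the inverse $\psi=\frac{1}{\lvert G\rvert}\sum_\rho d_\rho\,\omega(\rho)\overline{\chi_\rho}$ and check reality, inversion invariance and zero mass on it directly, whereas the paper invokes bijectivity of the character map abstractly and transfers the three conditions through the identities $\omega_{T_{\overline\varphi}}(\sigma)=\overline{\omega_{T_\varphi}(\overline\sigma)}$ and $\omega_{T_{\varphi^\vee}}(\sigma)=\omega_{T_\varphi}(\overline\sigma)$.
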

\begin{proof}
    By Lemma~\ref{lem:admissible-form}, the assignment $\varphi\mapsto T_\varphi$ is a bijection, so it suffices to study the map $\varphi\mapsto\omega_{T_\varphi}$. By Proposition~\ref{prop:admissible-block}, this map factors as $\varphi\to\varphi^\natural\mapsto\omega_{T_{\varphi^\natural}}$, and by \eqref{eq:omegaT}, its second stage sends a class function $\psi$ to the function $\sigma\mapsto\frac{1}{d_\sigma}\sum_{x\in G}\psi(x)\chi_\sigma(x)$ on $\widehat{G}$; that is, to the coefficients of $\psi$ in the character basis divided by $d_\sigma$. Since the irreducible characters form a basis for the class functions, that second stage is a linear bijection onto all functions from $\widehat{G}$ to $\mathbb{C}$. Writing $\varphi^\vee(x)=\varphi(x^{-1})$ and using $\chi_{\overline{\sigma}}=\overline{\chi_\sigma}$ together with $\chi_\sigma(x^{-1})=\overline{\chi_\sigma(x)}$, we have
    \begin{equation}
        \omega_{T_{\overline{\varphi}}}(\sigma)=\overline{\omega_{T_\varphi}(\overline{\sigma})}
    \end{equation}
    and
    \begin{equation}
        \omega_{T_{\varphi^\vee}}(\sigma)=\omega_{T_\varphi}(\overline{\sigma}),
    \end{equation}
    and $\omega_{T_\varphi}(\sigma_\textnormal{triv})=\sum_{x\in G}\varphi(x)$. Hence, $\varphi^\natural$ is real if and only if $\omega_T(\sigma)=\overline{\omega_T(\overline{\sigma})}$, it is inversion invariant if and only if $\omega_T(\overline{\sigma})=\omega_T(\sigma)$, and it has vanishing total mass if and only if $\omega_T(\sigma_\textnormal{triv})=0$. The first two conditions together say exactly that $\omega_T$ is real valued and agrees on complex conjugate pairs, and the kernel is as stated.

    For the last claim, write $(R_bf)(g)=f(gb)$ and substitute $h\mapsto hb^{-1}$ to obtain
    \begin{equation}
        (T_\varphi R_bf)(g)=\sum_{h\in G}\varphi(g^{-1}hb^{-1})f(h)
    \end{equation}
    and
    \begin{equation}
        (R_bT_\varphi f)(g)=\sum_{h\in G}\varphi(b^{-1}g^{-1}h)f(h),
    \end{equation}
    so $T_\varphi$ commutes with every right translation if and only if $\varphi(xy)=\varphi(yx)$ for all $x,y\in G$; that is, if and only if $\varphi=\varphi^\natural$. On that subspace, the map is therefore injective, and by the proof of Theorem~\ref{thm:characterization}, the subspace is $\operatorname{span}_\mathbb{R}\{\mathcal{L}_K\}$.
\end{proof}

\begin{proposition}\label{prop:positivity}
    If $\alpha_K\ge0$ for every $K\in\mathcal{K}$, then $T_{\varphi^\natural}$ is positive semidefinite and is the Laplacian of the weighted Cayley graph on $G$ carrying weight $\alpha_K/\lvert K\rvert$ on the edges labeled by $K$.
\end{proposition}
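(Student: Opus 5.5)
The plan is to write $T_{\varphi^\natural}$ explicitly in terms of the coefficients $\alpha_K$, as in the proof of Theorem~\ref{thm:characterization}, and then recognize it as a graph Laplacian. There we obtained $\varphi^\natural=\lambda_e\delta_e+\sum_K\lambda_K\mu_K$ with $\lambda_e=-\sum_K\lambda_K$ and $\alpha_K=-\lambda_K$. Hence
\begin{equation}
    \varphi^\natural=\sum_{K\in\mathcal{K}}\alpha_K(\delta_e-\mu_K),\qquad T_{\varphi^\natural}=\sum_{K\in\mathcal{K}}\alpha_K\mathcal{L}_K.
\end{equation}

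\textbf{Positive semidefiniteness.} Proposition~\ref{prop:quadratic} applies verbatim to any symmetric subset $K$, whether or not it generates $G$, since its proof uses only the symmetry of the set. This gives
\begin{equation}
    \langle f,\mathcal{L}_Kf\rangle=\frac{1}{2\lvert G\rvert\lvert K\rvert}\sum_{g\in G}\sum_{k\in K}\lvert f(g)-f(gk)\rvert^2\ge0 .
\end{equation}
A nonnegative combination of positive semidefinite operators is positive semidefinite, so $\alpha_K\ge0$ immediately yields $\langle f,T_{\varphi^\natural}f\rangle\ge0$.

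\textbf{Identification with a weighted Laplacian.} Summing the identity above over $K$ gives
\begin{equation}
    \langle f,T_{\varphi^\natural}f\rangle=\frac{1}{2\lvert G\rvert}\sum_{g\in G}\sum_{x\neq e}w(x)\lvert f(g)-f(gx)\rvert^2 ,
\end{equation}
where $w(x)=\alpha_K/\lvert K\rvert$ for the unique $K\ni x$. This is exactly the quadratic form of the weighted Cayley graph with edge weight $w$ on the edges $\{g,gx\}$. The weight is well defined on undirected edges because $K$ is inversion closed, so $w(x)=w(x^{-1})$.

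At the operator level, $T_{\varphi^\natural}=D-W$. Here $W$ is the weighted adjacency operator $(Wf)(g)=\sum_{x}w(x)f(gx)$, and $D=\left(\sum_x w(x)\right)I=\left(\sum_K\alpha_K\right)I$ is the weighted degree, which is constant because the graph is vertex transitive. This is read off from the kernel $\varphi^\natural(g^{-1}h)$: its diagonal entry is $\sum_K\alpha_K$, and its off-diagonal entry is $-w(g^{-1}h)$.

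\textbf{Main obstacle.} The only real issue is normalization. With $\mathcal{L}=I-A$, the paper's Laplacian is normalized, whereas $\sum_K\alpha_K\mathcal{L}_K$ is the combinatorial Laplacian $D-W$ of the weighted graph; the two agree when $\sum_K\alpha_K=1$ and otherwise differ by a positive scale. I would state explicitly that "Laplacian of the weighted Cayley graph" means $D-W$ with the weights $\alpha_K/\lvert K\rvert$. Under that reading the statement follows directly from the decomposition above, and any nonnegative weighting of the $\mathcal{L}_K$ produces a legitimate, possibly disconnected, weighted Cayley graph.
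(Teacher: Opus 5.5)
Your proposal is correct and follows the paper's proof: you write $T_{\varphi^\natural}=\sum_{K}\alpha_K\mathcal{L}_K$ using the decomposition from the proof of Theorem~\ref{thm:characterization}, and you apply Proposition~\ref{prop:quadratic} termwise to each symmetric $K$ to obtain a nonnegative weighted Dirichlet energy. Your additional kernel-level identification $T_{\varphi^\natural}=D-W$ with $D=\bigl(\sum_K\alpha_K\bigr)I$ is correct, and so is your remark that this is the combinatorial rather than the normalized Laplacian unless $\sum_K\alpha_K=1$; both make precise what the paper leaves implicit in the phrase ``the Laplacian of the weighted Cayley graph.''
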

\begin{proof}
    Applying Proposition~\ref{prop:quadratic} termwise to $T_{\varphi^\natural}=\sum_K\alpha_K\mathcal{L}_K$ gives
    \begin{equation}
        \langle f,T_{\varphi^\natural}f\rangle=\sum_{K\in\mathcal{K}}\frac{\alpha_K}{2\lvert G\rvert\lvert K\rvert}\sum_{g\in G}\sum_{s\in K}\lvert f(g)-f(gs)\rvert^2\ge0,
    \end{equation}
    which is the Dirichlet energy of the weighted Cayley graph described.
\end{proof}

Admissibility alone does not imply positivity. Proposition~\ref{prop:positivity} identifies a natural Dirichlet subclass: non-negative class coefficients produce positive-semidefinite weighted Cayley-Laplacians. Specializing to $T=\mathcal{L}$ makes the weights explicit and exhibits the ordering induced by an arbitrary symmetric generating set as a convex combination of those induced by conjugation closed ones.

\begin{corollary}\label{cor:gen-set}
   Let $S=S^{-1}$ be a generating set with $e\notin S$. Then
   \begin{equation}
       \omega=\sum_{K\in\mathcal{K}}\frac{\lvert S\cap K\rvert}{\lvert S\rvert}\omega_K,
   \end{equation}
   so the weights $\alpha_K=\lvert S\cap K\rvert/\lvert S\rvert$ form a probability vector on $\mathcal{K}$.
\end{corollary}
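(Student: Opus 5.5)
The plan is to compute $\varphi^\natural$ for $\varphi=\delta_e-\mu_S$ explicitly and then read off the coefficients $\alpha_K$ from the decomposition in the proof of Theorem~\ref{thm:characterization}.

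\textbf{Step 1: the conjugation average of $\mu_S$.} Since $\delta_e$ is already a class function, we have $\varphi^\natural=\delta_e-\mu_S^\natural$. The sets $K\in\mathcal{K}$ together with $\{e\}$ partition $G$, and $e\notin S$. Each $K$ is a union of at most two conjugacy classes, $C$ and $C^{-1}$. Averaging over conjugation spreads the mass $\lvert S\cap C\rvert/\lvert S\rvert$ uniformly over the class $C$.

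\textbf{Step 2: handling $C\neq C^{-1}$.} This is the one point that needs care. Symmetry of $S$ gives $\lvert S\cap C\rvert=\lvert S\cap C^{-1}\rvert$, since inversion is a bijection $S\cap C\to S\cap C^{-1}$. Also $\lvert C\rvert=\lvert C^{-1}\rvert$. So the two classes receive equal density, namely $\lvert S\cap K\rvert/(\lvert S\rvert\lvert K\rvert)$ on every element of $K$. The same formula holds trivially when $C=C^{-1}$. Hence
\begin{equation*}
\mu_S^\natural=\sum_{K\in\mathcal{K}}\frac{\lvert S\cap K\rvert}{\lvert S\rvert}\mu_K .
\end{equation*}

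\textbf{Step 3: reading off the coefficients.} We obtain $\lambda_e=1$ and $\lambda_K=-\lvert S\cap K\rvert/\lvert S\rvert$. By the proof of Theorem~\ref{thm:characterization}, $\alpha_K=-\lambda_K=\lvert S\cap K\rvert/\lvert S\rvert$. Proposition~\ref{prop:admissible-block} then gives $\omega=\omega_{T_{\varphi^\natural}}=\sum_K\alpha_K\omega_K$.

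One can also verify this directly. Write $\omega_K(\sigma)=1-\frac{1}{d_\sigma\lvert K\rvert}\sum_{x\in K}\chi_\sigma(x)$, and use that $\chi_\sigma$ is constant on $C$ and on $C^{-1}$. Combined with the equal-count fact from Step 2, this gives
\begin{equation*}
\sum_{s\in S\cap K}\chi_\sigma(s)=\frac{\lvert S\cap K\rvert}{\lvert K\rvert}\sum_{x\in K}\chi_\sigma(x).
\end{equation*}
Summing over $K$ with the weights $\sum_K\alpha_K=1$ reproduces $1-\operatorname{Tr}[M_\sigma]/d_\sigma$.

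\textbf{Step 4: the probability vector.} The $\alpha_K$ are nonnegative. They sum to $1$ because $S\subseteq G\setminus\{e\}=\bigsqcup_K K$.
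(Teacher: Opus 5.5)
Your proposal is correct and follows essentially the same route as the paper: symmetry of $S$ gives $\lvert S\cap C\rvert=\lvert S\cap C^{-1}\rvert$, so $\mu_S^\natural=\sum_K\alpha_K\mu_K$, hence $\varphi^\natural=\sum_K\alpha_K(\delta_e-\mu_K)$, and Proposition~\ref{prop:admissible-block} finishes the argument. You also write out the density computation, including the use of $\lvert C\rvert=\lvert C^{-1}\rvert$, which the paper leaves implicit, and your direct character-sum check is a valid self-contained alternative to invoking Proposition~\ref{prop:admissible-block}.
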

\begin{proof}
    Symmetry of $S$ gives $\lvert S\cap C\rvert=\lvert S\cap C^{-1}\rvert$ for every conjugacy class $C$, so averaging $\mu_S$ over conjugation yields $\mu_S^\natural=\sum_K\alpha_K\mu_K$ with $\sum_K\alpha_K=1$. Hence, $\varphi^\natural=\sum_K\alpha_K(\delta_e-\mu_K)$, and the claim follows from Proposition~\ref{prop:admissible-block}.
\end{proof}

The positivity hypothesis of Proposition~\ref{prop:positivity} has a probabilistic reading. Writing $T_\varphi f(g)=\sum_{x\ne e}\varphi(x)[f(gx)-f(g)]$, which is the content of vanishing total mass, the operator $-T_{\varphi^\natural}$ is the generator of a symmetric convolution Markov semigroup precisely when $\varphi^\natural\le0$ away from the identity, and by the computation above, this holds exactly when every $\alpha_K$ is nonnegative.

The number of distinct $\omega_K$ is quantified by the size of $\mathcal{K}$, which can be determined to a great extent. Indeed, inversion acts on the set of conjugacy classes as an involution, and $\mathcal{K}$ is the set of its orbits on the nontrivial classes. So, if $G$ has $k$ conjugacy classes and $r$ of them are real (meaning $C=C^{-1}$, so that the character is real on $C$)~\cite{isaacs1994character}, the non-real ones pair off and we have
\begin{equation}
    \lvert\mathcal{K}\rvert=r-1+\frac{k-r}{2}=\frac{k+r}{2}-1,
\end{equation}
where the $-1$ is included to remove the class of the identity, which is always real. When $G$ is ambivalent, meaning every element is conjugate to its inverse, $r=k$ and $\lvert\mathcal{K}\rvert=k-1$, so $\mathcal{K}$ is just the nontrivial conjugacy classes. This is the case for all symmetric and dihedral groups, for example. However, it does not hold in general, as exhibited by the group $\mathbb{Z}_3$, which has $k=3$, $r=1$, and $\lvert\mathcal{K}\rvert=1$.

There are two more assumptions one can make beyond admissibility which narrow down the choice of ordering completely. We separate them from the above because they deal not with the group but with the generating set for the group. If we interpret the choice of $S$ as a declaration of which group elements count as incremental changes, then the following two assumptions are natural. First, we can assume that an admissible operator $T_\varphi$ is $S$-local in the sense that the support of $\varphi$ is contained in $\{e\}\cup S$. The second is that $T_\varphi$ is $S$-uniform in the sense that $\varphi$ is constant on $S$. Locality is the assumption that elements of $S$ count as incremental changes, and uniformity is the assumption that no element of $S$ counts for more than another. With these additional assumptions, the induced Laplacian is narrowed down to the Laplacian of Section~\ref{sec:finite-groups} up to a rescaling, which does not affect the ordering of irreps provided the constant is strictly positive; a negative constant reverses the ordering and a zero constant destroys it. Requiring positive semidefiniteness as in Proposition~\ref{prop:positivity} rules out the negative case, and requiring that $T$ be nonzero rules out the remaining one.

\begin{proposition}\label{prop:local-uniform}
    Let $S=S^{-1}$ generate $G$ with $e\notin S$. An admissible $T$ is $S$-local and $S$-uniform if and only if $T=c\mathcal{L}$ for some constant $c\in\mathbb{R}$.
\end{proposition}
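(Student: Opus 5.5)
By Lemma~\ref{lem:admissible-form}, every admissible operator has the form $T=T_\varphi$ for a real, inversion-invariant $\varphi$ with $\sum_x\varphi(x)=0$. Moreover, the assignment $\varphi\mapsto T_\varphi$ is a bijection. The plan is therefore to translate $S$-locality and $S$-uniformity into conditions on $\varphi$ and show that these force $\varphi=c(\delta_e-\mu_S)$. The relation $\mathcal{L}=T_{\delta_e-\mu_S}$ then gives $T=c\mathcal{L}$.

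For the forward direction, suppose $T_\varphi$ is admissible, $S$-local and $S$-uniform. Locality gives $\operatorname{supp}\varphi\subseteq\{e\}\cup S$, and uniformity gives $\varphi(s)=\beta$ for all $s\in S$, for some $\beta$. Hence
\begin{equation}
    \varphi=\lambda\delta_e+\beta\lvert S\rvert\mu_S
\end{equation}
with $\lambda=\varphi(e)$. Here $e\notin S$ guarantees that the two supports are disjoint, so $\lambda$ and $\beta$ are well defined. Vanishing total mass gives $\lambda+\beta\lvert S\rvert=0$. Setting $c:=\lambda$, we get $\varphi=c(\delta_e-\mu_S)$, and hence $T=T_\varphi=cT_{\delta_e-\mu_S}=c\mathcal{L}$ by linearity of $\varphi\mapsto T_\varphi$. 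Reality of $c$ follows from reality of $\varphi$.

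For the converse, take $T=c\mathcal{L}=T_{c(\delta_e-\mu_S)}$. This $\varphi$ is real, inversion invariant because $S=S^{-1}$, and has total mass zero, so $T$ is admissible by Lemma~\ref{lem:admissible-form} (as already noted after that lemma for $\mathcal{L}$). Its support lies in $\{e\}\cup S$, and it is constant, equal to $-c/\lvert S\rvert$, on $S$.

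I do not expect a real obstacle; this is bookkeeping. The only delicate point is making sure the decomposition into the value at $e$ and the constant on $S$ is unambiguous, which is exactly where $e\notin S$ is used. I would also remark that the generating hypothesis on $S$ is not needed for the equivalence itself. It only matters for interpreting $\mathcal{L}$ as a connected Cayley-Laplacian and for the subsequent discussion of the sign of $c$.
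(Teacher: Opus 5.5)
Your proposal is correct and follows essentially the same route as the paper: reduce to $T=T_\varphi$ via Lemma~\ref{lem:admissible-form}, write $\varphi=\lambda\delta_e+\beta\delta_S$, and use vanishing total mass to get $\varphi=c(\delta_e-\mu_S)$. The only cosmetic difference is that you invoke linearity of $\varphi\mapsto T_\varphi$ together with $\mathcal{L}=T_{\delta_e-\mu_S}$, whereas the paper expands $T_\varphi f$ directly as $\lambda_e f+\lambda\lvert S\rvert Af$; your remark that generation is never used is accurate, though note that $e\notin S$ is also needed in the converse, since otherwise $c(\delta_e-\mu_S)$ would not be constant on $S$.
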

\begin{proof}
    By Lemma~\ref{lem:admissible-form}, $T=T_\varphi$ for some real-valued $\varphi$ satisfying $\varphi(x^{-1})=\varphi(x)$ and $\sum_{x\in G}\varphi(x)=0$. If $T$ is $S$-local and $S$-uniform, then $\varphi=\lambda_e\delta_e+\lambda\delta_S$, where $\delta_S$ is the indicator function on $S$ and $\lambda_e,\lambda\in\mathbb{R}$. It follows that
    \begin{equation}
    \begin{aligned}
        (T_\varphi f)(g)=\sum_{h\in G}(\lambda_e\delta_e+\lambda\delta_S)(g^{-1}h)f(h)
        =\lambda_ef(g)+\lambda\sum_{s\in S}f(gs)
        =((\lambda_eI+\lambda\lvert S\rvert A)f)(g).
    \end{aligned}        
    \end{equation}
    Since $\varphi$ has vanishing total mass, $\lambda_e=-\lambda\lvert S\rvert$, and so $T=c\mathcal{L}$ for a constant $c\in\mathbb{R}$. Conversely, if $T=c\mathcal{L}$, then $\varphi=c(\delta_e-\mu_S)$, which is $S$-local and $S$-uniform.
\end{proof}

We stress that Theorem~\ref{thm:characterization} characterizes the orderings available once one fixes the group. If one additionally declares which elements of $G$ count as incremental changes, the operator is pinned down up to a scaling factor. We therefore view our choice of the Laplacian in Section~\ref{sec:finite-groups} not as an arbitrary choice, but as the only choice satisfying the several natural axioms outlined here.

In the first paragraph of the introduction, we noted that smoothness on $\mathbb{R}$ can be viewed as a constraint on the Fourier transform of the function of interest. With our ordering parameter in hand, we may now produce an analog of this statement for the case of finite groups. Specifically, we show that a bandlimiting hypothesis on the Fourier coefficients implies an upper bound on the forward difference of a function and vice versa.

\begin{theorem}
    Let $f\in L^2(G)$. If $\widehat{f}$ is supported on $\{\sigma:\omega(\sigma)\le\lambda\}$, then 
    \begin{equation}
        \langle f,\mathcal{L}^\natural f\rangle\le\lambda\|f\|^2.
    \end{equation}
    Conversely, if $\lvert f(g)-f(gx)\rvert\le\Lambda$ for all $g$ and all $x$ in each $K$ intersecting $S$, then
    \begin{equation}
        \sum_{\omega(\sigma)>\lambda}\|\widehat{f}(\sigma)\|^2_F<\frac{\Lambda^2}{2\lambda}.
    \end{equation}
\end{theorem}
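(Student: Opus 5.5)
The plan is to work entirely in the Peter-Weyl decomposition, using Corollary~\ref{cor:diagonal-laplacian}: $\mathcal{L}^\natural$ acts on each block $\mathcal{A}_\sigma$ as the scalar $\omega(\sigma)$. Write $f=\sum_\sigma f_\sigma$ with $f_\sigma\in\mathcal{A}_\sigma$. These blocks are mutually orthogonal and each is preserved by $\mathcal{L}^\natural$. By Parseval we have $\|f_\sigma\|^2=\|\widehat{f}(\sigma)\|_F^2$, and therefore
\begin{equation}
    \langle f,\mathcal{L}^\natural f\rangle=\sum_{\sigma\in\widehat{G}}\omega(\sigma)\|\widehat{f}(\sigma)\|_F^2 .
\end{equation}
This identity is the central tool, and both directions of the theorem should follow from it.

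For the forward direction, suppose $\widehat{f}$ is supported where $\omega(\sigma)\le\lambda$. Then every term in the sum is at most $\lambda\|\widehat{f}(\sigma)\|_F^2$. Summing and applying Parseval gives $\lambda\|f\|^2$, which is the bound.

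For the converse, I would first express $\langle f,\mathcal{L}^\natural f\rangle$ as a Dirichlet energy. By Corollary~\ref{cor:gen-set} and the proof of Theorem~\ref{thm:characterization}, $\mathcal{L}^\natural=\sum_K\alpha_K\mathcal{L}_K$ with $\alpha_K=\lvert S\cap K\rvert/\lvert S\rvert\ge0$ summing to one. Proposition~\ref{prop:positivity} then gives
\begin{equation}
    \langle f,\mathcal{L}^\natural f\rangle=\sum_{K}\frac{\alpha_K}{2\lvert G\rvert\lvert K\rvert}\sum_{g\in G}\sum_{x\in K}\lvert f(g)-f(gx)\rvert^2 .
\end{equation}
Only classes $K$ with $K\cap S\neq\emptyset$ carry positive weight. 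For those, the hypothesis bounds every term by $\Lambda^2$, so the inner average is at most $\Lambda^2/2$, and the convex combination is at most $\Lambda^2/2$. On the spectral side, since $\omega\ge0$ we drop the low-frequency terms:
\begin{equation}
    \langle f,\mathcal{L}^\natural f\rangle\ge\sum_{\omega(\sigma)>\lambda}\omega(\sigma)\|\widehat{f}(\sigma)\|_F^2\ge\lambda\sum_{\omega(\sigma)>\lambda}\|\widehat{f}(\sigma)\|_F^2 ,
\end{equation}
and we assume $\lambda>0$ so the bound is meaningful. Combining the two displays yields the tail bound $\sum_{\omega(\sigma)>\lambda}\|\widehat{f}(\sigma)\|_F^2\le\Lambda^2/(2\lambda)$.

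The main obstacle is the \emph{strict} inequality in the statement. The argument above naturally gives only $\le$, so strictness has to come from the second inequality being strict whenever the tail is nonzero. If the tail is nonzero, some $\sigma$ with $\omega(\sigma)>\lambda$ has $\widehat{f}(\sigma)\neq0$, and then $\omega(\sigma)\|\widehat{f}(\sigma)\|_F^2>\lambda\|\widehat{f}(\sigma)\|_F^2$. This gives strictness, so the tail is strictly less than $\Lambda^2/(2\lambda)$ whenever it is nonzero. If the tail is zero, strictness holds provided $\Lambda>0$. The degenerate case $\Lambda=0$ with zero tail has to be excluded or noted explicitly; I would state this caveat rather than try to force it. I would also check that the normalizations of $\widehat{f}$ and the inner product agree with Parseval as the paper states them, since the constant $1/2$ depends on that.
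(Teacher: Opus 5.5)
Your proof is correct and follows the paper's argument essentially step for step. The forward bound comes from the spectral identity $\langle f,\mathcal{L}^\natural f\rangle=\sum_\sigma\omega(\sigma)\|\widehat{f}(\sigma)\|_F^2$, obtained from Corollary~\ref{cor:diagonal-laplacian} and Parseval, and the converse from writing $\mathcal{L}^\natural=\sum_K\alpha_K\mathcal{L}_K$ and bounding each quadratic form via Proposition~\ref{prop:quadratic}. Beyond the paper, you use Corollary~\ref{cor:gen-set} to make explicit that the $\alpha_K$ are nonnegative, sum to one, and vanish off the classes meeting $S$, and you handle strictness correctly: it is automatic when the tail is nonzero, fails when $\Lambda=0$, and needs $\lambda>0$. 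The paper's chain of non-strict inequalities leaves all of these points implicit.
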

\begin{proof}
    Let $f_\sigma$ denote the projection of $f$ onto the $\sigma$-block. By Corollary~\ref{cor:diagonal-laplacian}, $\mathcal{L}^\natural$ acts on the $\sigma$ block as multiplication by $\omega(\sigma)$. Thus, $\mathcal{L}^\natural f_\sigma=\omega(\sigma)f_\sigma$. The blocks are mutually orthogonal, so we have
    \begin{equation}
        \langle f,\mathcal{L}^\natural f\rangle=\left\langle\sum_{\sigma\in\widehat{G}} f_\sigma,\sum_{\tau\in\widehat{G}} \omega(\tau)f_\tau\right\rangle=\sum_{\sigma\in\widehat{G}}\omega(\sigma)\|f_\sigma\|^2.
    \end{equation}
    By Parseval's identity, it follows that
    \begin{equation}\label{eq:natural-quadratic}
        \langle f,\mathcal{L}^\natural f\rangle=\sum_{\sigma\in\widehat{G}}\omega(\sigma)\|\widehat{f}_\sigma\|_F^2.
    \end{equation}
    If $\widehat{f}$ is supported on $\{\sigma:\omega(\sigma)\le\lambda\}$, it therefore follows that
    \begin{equation}
        \langle f,\mathcal{L}^\natural f\rangle=\sum_{\omega(\sigma)\le\lambda}\omega(\sigma)\|\widehat{f}_\sigma\|^2_F\le\lambda\sum_{\omega(\sigma)\le\lambda}\|\widehat{f}_\sigma\|_F^2,
    \end{equation}
    and by Parseval's identity, the right hand side is $\lambda\|f\|^2$.

    For the converse, suppose $\lvert f(x)-f(gx)\rvert\le\Lambda$ for all $g$ and all $x$ in each $K$ intersecting $S$. Then by Theorem~\ref{thm:characterization}, there are constants $\alpha_K$ such that $\mathcal{L}^\natural=\sum_K\alpha_K\mathcal{L}_K$ and by Proposition~\ref{prop:quadratic}, we have
    \begin{equation}
        \langle f,\mathcal{L}^\natural f\rangle=\sum_{K\in\mathcal{K}}\alpha_K\langle f,\mathcal{L}_Kf\rangle\le\frac{\Lambda^2}{2}\sum_{K\in\mathcal{K}}\alpha_K=\frac{\Lambda^2}{2}.
    \end{equation}
    Applying this inequality to \eqref{eq:natural-quadratic} produces
    \begin{equation}
        \frac{\Lambda^2}{2}\ge\langle f,\mathcal{L}^\natural f\rangle
        =\sum_{\sigma\in\widehat{G}}\omega(\sigma)\|\widehat{f}(\sigma)\|^2_F
        \ge\sum_{\omega(\sigma)>\lambda}\omega(\sigma)\|\widehat{f}(\sigma)\|^2_F
        \ge\lambda\sum_{\omega(\sigma)>\lambda}\|\widehat{f}(\sigma)\|^2_F,
    \end{equation}
    which is what we set out to prove.
\end{proof}

This theorem justifies the interpretation of smoothness as having Fourier support concentrated at low frequency. Indeed, an upper bound on the ordering parameter, restricting the Fourier support, produces a bound on the quadratic form associated to the conjugation-symmetrized Laplacian, which measures the amount that a function value changes when its domain is changed incrementally. Conversely, if one upper bounds the change in the function, this produces an upper bound on the weight of the Fourier support in the upper end of the spectrum.

\section{Smoothness on Sets Carrying a Group Action}\label{sec:group-action}
The theory of the previous sections applies to functions on a group, but it can be extended to functions on sets carrying a group action. This section removes the restriction using standard methods in harmonic analysis and is briefly discussed by Belis et al.~\cite{belis2026spectral}. We let a group act on the domain of the function and lift the function to a function on the group, where the theory we developed applies. This adds another layer to the construction; the ordering, and hence the notion of smoothness, is now a consequence of which group one chooses to act with. In Section~\ref{sec:examples}, we will discuss how two different groups acting on the same function domain can induce entirely different simplicity biases.

Let $X$ be a finite set and suppose a finite group $G$ acts transitively on $X$, so that for any $x,y\in X$, there is a $g\in G$ with $g\cdot x=y$. Fix a basepoint $x_0\in X$ and let
\begin{equation}
    H=\{h\in G:h\cdot x_0=x_0\}
\end{equation}
be its stabilizer. The map $Hg\mapsto g^{-1}\cdot x_0$ is then a bijection, giving the identification $X\cong H\setminus G$ and in particular, $\lvert X\rvert=\lvert G\rvert/\lvert H\rvert$. Transitivity is not much of a restriction in practice, since a group acting with several orbits can be treated one orbit at a time.

Given a function $f:X\to\mathbb{C}$, define its \emph{lift} $f^\uparrow: G\to\mathbb{C}$ by
\begin{equation}
    f^{\uparrow}(g)=f(g^{-1}\cdot x_0).
\end{equation}
For any $h\in H$, we have $f^\uparrow(hg)=f(g^{-1}\cdot(h^{-1}\cdot x_0))=f^\uparrow(g)$, so the lift is constant on the left cosets $Hg$. Write
\begin{equation}
    L^2(G)^H=\{F\in L^2(G):F(hg)=F(g)\ \forall h\in H\}
\end{equation}
for the space of such functions. Lifting is a linear isomorphism $L^2(X)\to L^2(G)^H$, and both spaces have dimension $\lvert X\rvert$.

For the theory above to descend, the averaging operator must preserve $L^2(G)^H$ so that it can be thought of as a map $A:L^2(G)^H\to L^2(G)^H$. It does, as we now show.
\begin{lemma}\label{lem:descend}
    Let $H\le G$ be any subgroup. Then $A$ preserves $L^2(G)^H$, and hence so does $\mathcal{L}=I-A$.
\end{lemma}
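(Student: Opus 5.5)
The plan is a direct check from the definitions, using only that $A$ acts by right multiplication while the invariance defining $L^2(G)^H$ is under left multiplication. Right and left multiplications on $G$ commute, so averaging on the right cannot disturb left $H$-invariance.

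Concretely, take $F\in L^2(G)^H$, $h\in H$ and $g\in G$. Then
\begin{equation}
    (AF)(hg)=\frac{1}{\lvert S\rvert}\sum_{s\in S}F(hgs)=\frac{1}{\lvert S\rvert}\sum_{s\in S}F(gs)=(AF)(g).
\end{equation}
The middle equality applies the defining property $F(hy)=F(y)$ with $y=gs$, which is legitimate for each fixed $s$. Hence $AF\in L^2(G)^H$.

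Since the identity trivially preserves $L^2(G)^H$ and this space is a linear subspace, $\mathcal{L}F=F-AF$ also lies in it. Equivalently, one can phrase this as the statement that $A$ commutes with every left translation $L_a$, which was already used implicitly in Lemma~\ref{lem:admissible-form}. Since $L^2(G)^H$ is exactly the set of vectors fixed by every $L_h$ with $h\in H$, any operator commuting with these translations maps fixed vectors to fixed vectors.

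I do not expect any real obstacle. The only point needing care is the convention: $H$ acts on the left (cosets $Hg$) and $S$ acts on the right. If the generating set acted on the left instead, the argument would fail and one would need $S$ to be normalized by $H$. No hypothesis on $S$ beyond its use in defining $A$ is needed, not even symmetry, and the same argument works verbatim for any admissible $T_\varphi$.
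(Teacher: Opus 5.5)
Your proof is correct and is the same direct computation the paper gives: $(AF)(hg)=\frac{1}{\lvert S\rvert}\sum_{s\in S}F(hgs)=\frac{1}{\lvert S\rvert}\sum_{s\in S}F(gs)=(AF)(g)$, applying left $H$-invariance to each argument $gs$. The conclusion for $\mathcal{L}=I-A$ then follows because $L^2(G)^H$ is a linear subspace. Your reformulation via commuting with left translations is an equivalent repackaging of the same fact.
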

\begin{proof}
    Let $F\in L^2(G)^H$ and $h\in H$. Then
    \begin{equation}
        (AF)(hg)=\frac{1}{\lvert S\rvert}\sum_{s\in S}F(hgs)=\frac{1}{\lvert S\rvert}\sum_{s\in S}F(gs)=(AF)(g),
    \end{equation}
    where the second equality applies the left $H$-invariance of $F$ to each argument $gs$ separately.
\end{proof}

Concretely, the operator induced on $X$ is the averaging operator of the Schreier graph. Define $A_X:L^2(X)\to L^2(X)$ by
\begin{equation}
    (A_Xf)(x)=\frac{1}{\lvert S\rvert}\sum_{s\in S}f(s\cdot x).
\end{equation}
Substituting $x=g^{-1}\cdot x_0$ into the definition of the lift gives
\begin{equation}
    (Af^\uparrow)(g)=\frac{1}{\lvert S\rvert}\sum_{s\in S}f(s^{-1}g^{-1}\cdot x_0)=(A_Xf)(g^{-1}\cdot x_0),
\end{equation}
where the last equality uses symmetry of $S$. Hence, $(A_Xf)^\uparrow=Af^\uparrow$, and lifting intertwines $\mathcal{L}$ with the Laplacian of the Schreier graph of $(X,S)$. In particular, $\omega$ may be computed on $X$ directly, without passing through $G$.

Not every Fourier coefficient of $G$ is visible on $X$. To see which are, introduce the averaging operator over the stabilizer,
\begin{equation}
    (\Pi F)(g)=\frac{1}{\lvert H\rvert}\sum_{h\in H}F(hg).
\end{equation}
Since $H$ is a group, $\Pi$ is idempotent and Hermitian, and its image is exactly $L^2(G)^H$. It is therefore the orthogonal projection onto the space of lifted functions. The question of which frequencies survive is the question of how $\Pi$ interacts with the blocks of Proposition~\ref{prop:block-diag}, and the computation is the one already performed there.

\begin{proposition}\label{prop:restricted-fourier}
    The projection $\Pi$ preserves the block $\mathcal{A}_\sigma=\operatorname{span}\{\sigma_{ij}\}$, and
    \begin{equation}
        \dim(\mathcal{A}_\sigma\cap L^2(G)^H)=d_\sigma m_\sigma,
    \end{equation}
    where $m_\sigma=\frac{1}{\lvert H\rvert}\sum_{h\in H}\chi_\sigma(h)$. Moreover, $\sum_\sigma d_\sigma m_\sigma=\lvert X\rvert$.
\end{proposition}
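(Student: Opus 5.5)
Compute $\Pi$ on the matrix coefficients directly, mirroring the proof of Proposition~\ref{prop:block-diag} but with the group element multiplying on the left. For $\sigma_{ij}$ we have
\begin{equation}
    (\Pi\sigma_{ij})(g)=\frac{1}{\lvert H\rvert}\sum_{h\in H}\sum_{k=1}^{d_\sigma}\sigma_{ik}(h)\sigma_{kj}(g)=\sum_{k=1}^{d_\sigma}(P_\sigma)_{ik}\sigma_{kj}(g),
\end{equation}
where $P_\sigma:=\frac{1}{\lvert H\rvert}\sum_{h\in H}\sigma(h)$. This already shows that $\Pi$ preserves $\mathcal{A}_\sigma$. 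In the $(i,j)$ coordinates on $\mathcal{A}_\sigma$, it acts as $P_\sigma^{T}\otimes I_{d_\sigma}$: it mixes the row index and leaves the column index alone. This is the mirror image of the averaging operator $A$, which mixes $j$ and fixes $i$.

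Next I would show that $P_\sigma$ is an orthogonal projection on $\mathbb{C}^{d_\sigma}$.
\begin{itemize}
    \item Idempotence follows because $H$ is a group: reindexing $hh'$ gives $P_\sigma^2=P_\sigma$.
    \item Hermiticity follows from unitarity of $\sigma$ and closure of $H$ under inversion.
\end{itemize}
Hence $\operatorname{rank}P_\sigma=\operatorname{Tr}P_\sigma=\frac{1}{\lvert H\rvert}\sum_{h\in H}\chi_\sigma(h)=m_\sigma$. In particular $m_\sigma$ is a nonnegative integer, namely the dimension of the $H$-fixed vectors in $\sigma$.

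Since $\Pi$ is the orthogonal projection onto $L^2(G)^H$ and preserves $\mathcal{A}_\sigma$, the intersection $\mathcal{A}_\sigma\cap L^2(G)^H$ is exactly the image of $\Pi|_{\mathcal{A}_\sigma}$. One direction is immediate: anything in the intersection is fixed by $\Pi$. Conversely, $\Pi$ maps $\mathcal{A}_\sigma$ into both $\mathcal{A}_\sigma$ and $L^2(G)^H$. The rank of $P_\sigma^T\otimes I_{d_\sigma}$ is $m_\sigma d_\sigma$, because $\{\sigma_{ij}\}$ is a basis of $\mathcal{A}_\sigma$ (these functions are orthogonal by Peter--Weyl) and the rank of a tensor product is the product of the ranks. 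This gives the dimension formula.

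For the sum, Peter--Weyl gives $L^2(G)=\bigoplus_\sigma\mathcal{A}_\sigma$. Because $\Pi$ preserves each block, $L^2(G)^H=\operatorname{im}\Pi=\bigoplus_\sigma(\mathcal{A}_\sigma\cap L^2(G)^H)$, so
\begin{equation}
    \sum_\sigma d_\sigma m_\sigma=\dim L^2(G)^H=\lvert X\rvert,
\end{equation}
using the lifting isomorphism established above. As an independent check, $\sum_\sigma d_\sigma\chi_\sigma$ is the regular character, equal to $\lvert G\rvert\delta_e$, so $\sum_\sigma d_\sigma m_\sigma=\lvert G\rvert/\lvert H\rvert$.

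The only real care needed is the bookkeeping of which index $\Pi$ acts on, since it is the transpose of the situation in Proposition~\ref{prop:block-diag}, and confirming that the rank is computed on a genuine basis of $\mathcal{A}_\sigma$. Everything else is routine.
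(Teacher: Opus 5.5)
Your proposal is correct and follows essentially the same route as the paper: compute $\Pi\sigma_{ij}=\sum_k (P_\sigma)_{ik}\sigma_{kj}$, use idempotence of $P_\sigma$ (by reindexing over $H$) to get $\operatorname{rank}P_\sigma=\operatorname{Tr}P_\sigma=m_\sigma$, let the free column index supply the factor $d_\sigma$, and sum over $\sigma$ using $\dim L^2(G)^H=\lvert X\rvert$. Three of your additions make explicit steps the paper leaves implicit: identifying $\mathcal{A}_\sigma\cap L^2(G)^H$ with $\Pi(\mathcal{A}_\sigma)$, decomposing $\operatorname{im}\Pi$ block by block, and the regular-character cross-check. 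The Hermiticity of $P_\sigma$ is true but is not needed for the rank computation.
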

\begin{proof}
    Let $P_\sigma^H=\frac{1}{\lvert H\rvert}\sum_{h\in H}\sigma(h)$. Similar to the proof of Proposition~\ref{prop:block-diag}, using $\sigma(hg)=\sigma(h)\sigma(g)$, we have
    \begin{equation}
        (\Pi\sigma_{ij})(g)=\frac{1}{\lvert H\rvert}\sum_{h\in H}\sigma_{ij}(hg)=\sum_{k=1}^{d_\sigma}\left(P_\sigma^H\right)_{ik}\sigma_{kj}(g),
    \end{equation}
    so $\Pi$ preserves $\sigma$ and the index $j$ and acts on the index $i$ by the matrix $P_\sigma^H$. In particular, $\mathcal{A}_\sigma$ is preserved. Re-indexing the double sum by $h\mapsto hh'$ shows $(P_\sigma^H)^2=P_\sigma^H$, so $P_\sigma^H$ is idempotent and its rank equals its trace,
    \begin{equation}
        \operatorname{rank}(P_\sigma^H)=\operatorname{Tr}\left[P_\sigma^H\right]=\frac{1}{\lvert H\rvert}\sum_{h\in H}\chi_\sigma(h)=m_\sigma.
    \end{equation}
    The $i$ index is confined to an $m_\sigma$-dimensional image while the $j$ index remains free, giving $d_\sigma m_\sigma$ surviving dimensions in the block. Summing over $\sigma$ and using $\dim(L^2(G)^H)=\lvert G\rvert/\lvert H\rvert=\lvert X\rvert$ gives the last claim.
\end{proof}

Proposition~\ref{prop:restricted-fourier} says that $\sigma$ contributes to $L^2(X)$ exactly when $m_\sigma>0$; that is, exactly when $\mathcal{H}_\sigma$ contains a nonzero $H$-fixed vector, and that it then contributes $d_\sigma m_\sigma$ dimensions rather than the $d_\sigma^2$ it occupies in $L^2(G)$. The frequencies visible on $X$ are therefore a subset of $\widehat{G}$, and the ordering induced on $X$ is the restriction of $\omega$ to that subset. Two extreme cases are worth recording. The trivial representation always survives since $m_\sigma=1$, so the constant functions are present on $X$ and remain the smoothest, as they must be. At the other end, taking $H$ trivial gives $m_\sigma=d_\sigma$ for every $\sigma$, recovering $L^2(X)=L^2(G)$ and the theory of the previous sections unchanged. Between these, enlarging $H$ shrinks $X$ and prunes the spectrum, and the pruning is not uniform across $\widehat{G}$; an irrep with no $H$-fixed vector is invisible on $X$ no matter what its ordering parameter is. In this sense, the pair $(G,H)$ selects which frequencies exist and the pair $(G,S)$ orders them, and the two choices are independent. The multiplicity $m_\sigma$ can exceed one, in which case the surviving copies of $\sigma$ all carry the same block of $\mathcal{L}$ and hence the same value $\omega(\sigma)$, so the ordering does not distinguish them. The case in which this degeneracy is absent is that of a Gelfand pair~\cite{diaconis1988group,ceccherini1945harmonic}, where $m_\sigma\le1$ for every $\sigma$ and $L^2(X)$ is multiplicity free. Each surviving irrep then occurs exactly once, in a $d_\sigma$-dimensional irreducible subspace, and the ordering parameter assigns a single value $\omega(\sigma)$ to that entire component.

\section{Examples}\label{sec:examples}

We now illustrate the general theory with four examples, two of which showcase the way our notion of smoothness depends on the choice of generating set. Moreover, once $X$ is a bare set, nothing selects the group acting on it, and different choices induce genuinely different notions of smoothness. We first discuss the latter point using two concrete examples. We then give two examples in which the generating set produces different orderings and therefore different notions of smoothness.

\subsection{Actions on the first n integers}
Let $X=\{0,\ldots,n-1\}$. One natural choice of action is $G=\mathbb{Z}_n$ acting by rotation; that is, $k\cdot x=x+k\mod n$. The action is simply transitive, so $H$ is trivial regardless of basepoint, and $m_\sigma=1$ for every $\sigma$. No frequency is lost in the descent to $X$. The identification $X\cong G$ sends $x$ to the group element $x_0-x$, and $L^2(X)\cong L^2(G)$ as spaces of functions.

Take $S=\{\pm1\}$, which is symmetric, generates $G$, and is closed under conjugation since $G$ is abelian. The Cayley graph is the $n$-cycle. All irreps are one-dimensional, given by the characters $\chi_k(x)=e^{2\pi ikx/n}$ for $k\in\mathbb{Z}_n$, so $\widehat{G}\cong\mathbb{Z}_n$, and each block of Proposition~\ref{prop:block-diag} is a single number. Explicitly, we have
\begin{equation}
    M_{\chi_k}=\frac{1}{2}\left(\chi_k(1)+\chi_k(-1)\right)=\cos(2\pi k/n),
\end{equation}
and since $d_{\chi_k}=1$, the trace is the value itself, giving
\begin{equation}
    \omega(\chi_k)=1-\cos(2\pi k/n).
\end{equation}
The ordering this induces is the familiar one. The trivial character $k=0$ has $\omega=0$ as required, and $\omega$ increases with $\lvert k\rvert$ measured cyclically, so the frequencies are ordered by how far $k$ sits from $0$ in $\mathbb{Z}_n$. Conjugate pairs $\chi_k$ and $\chi_{n-k}=\overline{\chi_k}$ receive equal values, consistent with Theorem~\ref{thm:image}. The spectrum is therefore symmetric and $\omega$ takes $\lfloor n/2\rfloor+1$ distinct values. The upper bound $\omega=2$ is attained exactly when $n$ is even by $k=n/2$, which is the alternating character $\chi_{n/2}(x)=(-1)^x$. This matches the bipartiteness criterion since the $n$-cycle is bipartite precisely for even $n$. 

Another natural choice of action is $G=\mathbb{S}_n$ acting by permutation; that is, $\pi\cdot x=\pi(x)$. The action is transitive but not simply so, and the stabilizer of any basepoint is the copy of $\mathbb{S}_{n-1}$ fixing that point, so $\lvert X\rvert=\frac{n!}{(n-1)!}=n$ as it must be. The frequencies visible on $X$ are those $\sigma$ with $m_\sigma>0$. Here $\mathbb{C}[X]$ carries the natural permutation representation of $\mathbb{S}_n$ on $n$ points, which decomposes as the trivial representation plus the standard representation, each once. Since $\dim(\mathcal{A}_\sigma\cap L^2(G)^H)=d_\sigma m_\sigma$ and $L^2(X)\cong\mathbb{C}[X]$, exactly two of the irreps of $\mathbb{S}_n$ survive, both with $m_\sigma=1$. Every other irrep, including all those of large dimension, is invisible on $X$ no matter what generating set is chosen.

The two surviving carrier spaces are the constant functions of dimension one and the mean-zero functions of dimension $n-1$. Since $m_\sigma=1$ for both, $(\mathbb{S}_n,\mathbb{S}_{n-1})$ is a Gelfand pair and the decomposition is multiplicity free. The trivial representation has $\omega=0$ for any choice of $S$. The standard representation receives a single value $\omega_\textnormal{std}>0$ whose size depends on $S$ but whose position in the ordering does not, since it is the only other frequency present.

The consequence is that the ordering on $X$ carries no information. Every $f\in L^2(X)$ splits uniquely as its mean plus a mean-zero remainder, and the only smoothness prior available is that $f$ be close to a constant. Contrasted with $\mathbb{Z}_n$ on the same $X$, where all $n$ frequencies survive, and the ordering distinguishes them, this shows that the meaning of smoothness is dependent on the choice of group action.

\subsection{Actions on bitstrings} Let $X=\{0,1\}^n$ and for a first action take $G=\mathbb{Z}_2^n$ acting by translation. With $x_0=(0,\ldots,0)$, the stabilizer $H$ is trivial, and we recover the $2^n$ frequencies ordered by Hamming weight studied by Belis et al.~\cite{belis2026spectral}. Indeed, we take $S=\{e_i\}_{i=1}^n$, where $e_i=(0,\ldots,0,1,0\ldots,0)$ denotes the standard basis vector with a one in the $i$-th component and a zero otherwise. The irreps are given by the characters $\chi_k(x)=(-1)^{k\cdot x}$ and we have
\begin{equation}
        (A\chi_k)(x)=\frac{1}{\lvert S\rvert}\sum_{s\in S}\chi_k(xs)
        =\chi_k(x)\frac{1}{\lvert S\rvert}\sum_{s\in S}\chi_k(s)
        =\chi_k(x)\frac{1}{n}\sum_{i=1}^n\chi_k(e_i)
        =\chi_k(x)\frac{1}{n}\sum_{i=1}^n(-1)^{k_i}.
\end{equation}
Letting $\lvert k\rvert$ denote the Hamming weight, this becomes
\begin{equation}
    (A\chi_k)(x)=\chi_k(x)\frac{1}{n}(n-2\lvert k\rvert).
\end{equation}
It follows that $\omega(\chi_k)=2\lvert k\rvert/n$, so that the ordering parameter reduces to a simple rescaling of the Hamming weight ordering.

A second action is given by the hyperoctahedral group $B_n=\mathbb{Z}_2^n\rtimes\mathbb{S}_n$, which permutes the coordinates in addition to the translation action. This action is also transitive on $X$ with $H=\mathbb{S}_n$ the stabilizer of $x_0=(0,\ldots,0)$. We again take $S=\{(e_i,\mathrm{id})\}_{i=1}^n$, which generates only the normal subgroup $\mathbb{Z}_2^n$. This is harmless because generation was assumed in Section~\ref{sec:finite-groups} only to make the Cayley graph connected, and $\mathbb{Z}_2^n$ already acts transitively on $X$, so the Schreier graph on $X$ is connected and $\omega$ still vanishes only at the trivial representation among the irreps visible on $X$. Restricting an irrep of $B_n$ to the normal subgroup $\mathbb{Z}_2^n$ decomposes it into characters, and $B_n$ acts on those characters through its quotient $\mathbb{S}_n$ by
\begin{equation}
    (\pi\cdot\chi_k)(x)=\chi_k(\pi^{-1}x)=\chi_{\pi(k)}(x),
\end{equation}
where $\pi$ permutes the coordinates of $k$. Since the orbits of $\mathbb{S}_n$ on $\mathbb{Z}_2^n$ are the Hamming levels, the $2^n$ characters of $\mathbb{Z}_2^n$ fuse into $n+1$ classes under the larger group, one for each value of $\lvert k\rvert$. The surviving irrep at level $j$ has carrier space $\operatorname{span}\{\chi_k:\lvert k\rvert=j\}$ of dimension $\binom{n}{j}$. That each such space is irreducible under $B_n$, and that no two are equivalent, is the statement that $(B_n,\mathbb{S}_n)$ is a Gelfand pair~\cite{ceccherini1945harmonic}, so every $m_\sigma$ is at most one and the decomposition of $L^2(X)$ is multiplicity free.

The comparison with the first action is the point. Passing from $\mathbb{Z}_2^n$ to $B_n$ does not change $X$ and does not change the values taken by the ordering parameter, since $\omega(\chi_k)=\frac{2\lvert k\rvert}{n}$ already depended on $k$ only through $\lvert k\rvert$. What changes is the resolution of the ordering. Under $\mathbb{Z}_2^n$, there are $2^n$ separate frequencies which happen to be assigned equal values in groups, while under $B_n$, those groups have merged into single frequencies of dimension $\binom{n}{j}$. The larger symmetry group removes the ability to distinguish coordinates, and the ordering becomes an ordering of $n+1$ objects rather than of $2^n$. This phenomenon is no accident, and the reason is elementary.

\begin{lemma}
    Let $\gamma$ be an automorphism of $G$ with $\gamma(S)=S$, and for an irrep $\sigma$ write $\sigma^\gamma=\sigma\circ\gamma$. Then $\omega(\sigma^\gamma)=\omega(\sigma)$.
\end{lemma}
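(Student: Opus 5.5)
The plan is to compute $\omega(\sigma^\gamma)$ directly from the character-sum formula $\omega(\sigma)=1-\frac{1}{d_\sigma\lvert S\rvert}\sum_{s\in S}\chi_\sigma(s)$ and reindex the sum using the bijection $\gamma\vert_S:S\to S$.

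\textbf{Step 1: $\sigma^\gamma$ is a unitary irrep of the same dimension.} Since $\gamma$ is an automorphism, $\sigma^\gamma=\sigma\circ\gamma$ is a homomorphism $G\to U(d_\sigma)$. It is irreducible because $\gamma$ is surjective: the image $\sigma^\gamma(G)=\sigma(\gamma(G))=\sigma(G)$ is the same set of matrices as for $\sigma$, so $\sigma^\gamma$ and $\sigma$ have the same invariant subspaces. Hence $d_{\sigma^\gamma}=d_\sigma$, and $\omega(\sigma^\gamma)$ is defined. By the remark following the definition of the ordering parameter, $\omega(\sigma^\gamma)$ depends only on the equivalence class of $\sigma^\gamma$, so the choice of representative does not matter.

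\textbf{Step 2: reindex the sum over $S$.} We have
\begin{equation}
M_{\sigma^\gamma}=\frac{1}{\lvert S\rvert}\sum_{s\in S}\sigma(\gamma(s))=\frac{1}{\lvert S\rvert}\sum_{t\in S}\sigma(t)=M_\sigma.
\end{equation}
The middle equality holds because $\gamma$ is injective and $\gamma(S)=S$, so $s\mapsto\gamma(s)$ is a bijection of $S$. This gives equality of the block matrices themselves, not just of their traces. Taking traces and dividing by $d_{\sigma^\gamma}=d_\sigma$ then yields $\omega(\sigma^\gamma)=\omega(\sigma)$.

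\textbf{Main obstacle.} The only point needing care is Step 1, namely that $\sigma^\gamma$ is again irreducible, so that it defines an element of $\widehat{G}$ at which $\omega$ may be evaluated. This follows immediately from $\gamma(G)=G$. Everything else is the single reindexing in Step 2. As a byproduct, $M_{\sigma^\gamma}=M_\sigma$ shows that the entire block of $\mathcal{L}$ at $\sigma^\gamma$ coincides with the block at $\sigma$, not only its mean. This is the fact the subsequent discussion needs to explain why characters in the same $\mathbb{S}_n$-orbit, i.e.\ the same Hamming level, receive equal ordering parameters.
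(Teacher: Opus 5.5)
Your proof is correct and is essentially the paper's argument. The paper likewise reindexes the sum over $S$ using the bijection $\gamma\vert_S$ to get $M_{\sigma^\gamma}=M_\sigma$, from which the ordering parameters agree. Your Step 1, checking that $\sigma^\gamma$ is again a unitary irrep of the same dimension because $\gamma$ is surjective, is a point the paper leaves implicit.
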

\begin{proof}
    Since $\gamma$ restricts to a bijection of $S$, we have
    \begin{equation}
        M_{\sigma^\gamma}=\frac{1}{\lvert S\rvert}\sum_{s\in S}\sigma(\gamma(s))=\frac{1}{\lvert S\rvert}\sum_{s\in S}\sigma(s)=M_\sigma,
    \end{equation}
    and so the ordering parameters agree.
\end{proof}

\begin{figure*}
    \centering
    \begin{subfigure}[t]{0.48\textwidth}
        \includegraphics[width=\textwidth]{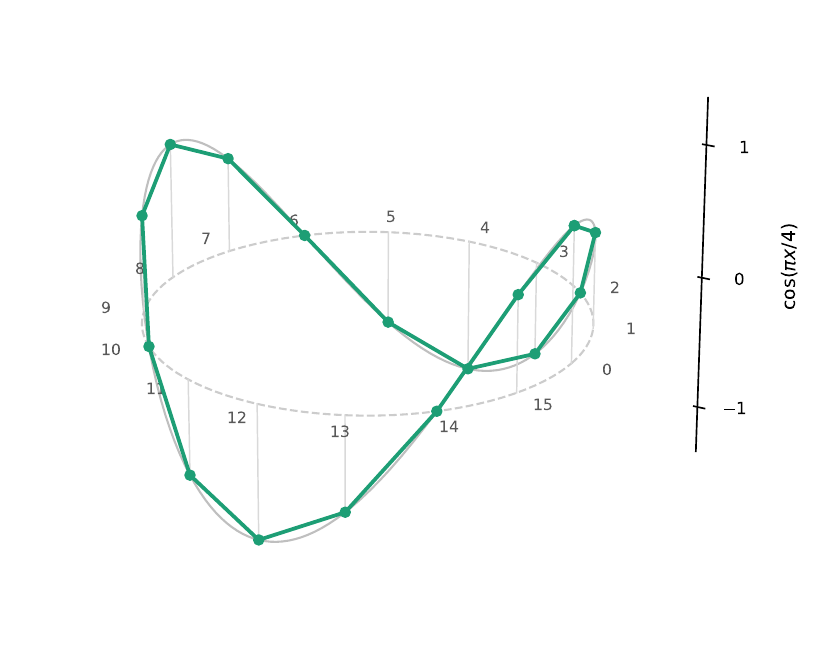}
    \end{subfigure}
    \begin{subfigure}[t]{0.48\textwidth}
        \includegraphics[width=\textwidth]{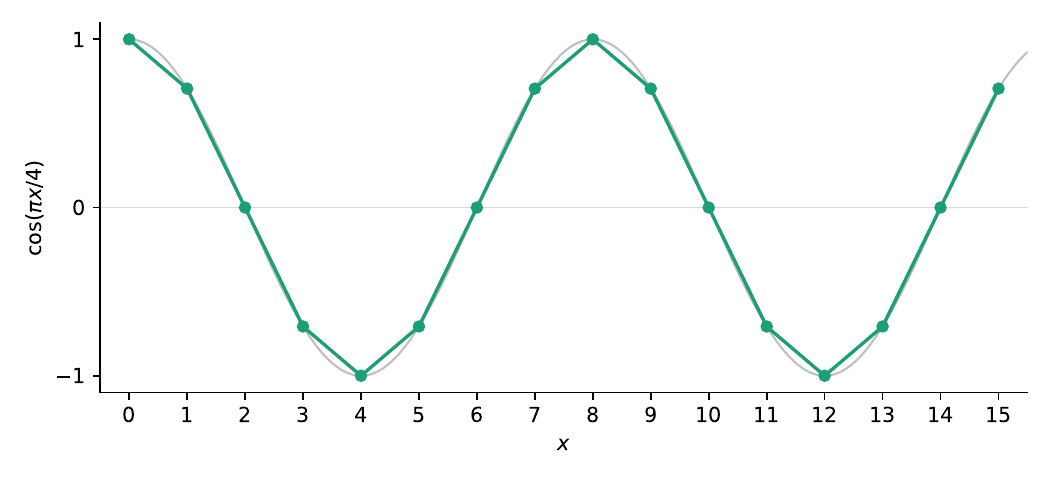}
    \end{subfigure}
    \caption{Plot of $f(x)=\frac12(\chi_2+\chi_{14})=\cos\left(\frac{\pi}{4}x\right)$ over the Cayley graph of $\mathbb{Z}_{16}$ with generating set $\{\pm1\}$. The plot above the actual Cayley graph is shown in the left panel, while the plot in a planar embedding is shown in the right panel.}
    \label{fig:s1-plots}
\end{figure*}

\subsection{Visualizing Smoothness on a Cyclic Group}
Consider the group $\mathbb{Z}_{16}$ whose irreps are the 16-th roots of unity $\chi_k(x)=e^{2\pi ikx/16}=e^{\pi ikx/8}$, and let us compare two different choices of generating set, $S_1=\{\pm1\}$ and $S_3=\{\pm3\}$. For $S_1$, the Cayley graph is given by the 16-cycle, but for visualization purposes, let us chop the edge between points 15 and 0 in half, and note that they are identified with each other. Thus, we may draw the Cayley graph as a straight line with 16 points labeled $0$ through $15$ with the caveat that when one reaches the end of the last edge, they loop back around to the edge entering $0$. This allows us to plot the values of a function on $\mathbb{Z}_{16}$ in a plane as one usually would. Consider the function $f(x)=\frac12(\chi_2(x)+\chi_{14}(x))=\cos\left(\frac{\pi}{4}x\right)$. Its plot along an axis orthogonal to the Cayley graph is pictured in Figure~\ref{fig:s1-plots} in both the circular Cayley graph format and the linear axis format.

For $S_3$, the Cayley graph is also given by the 16-cycle, but the group elements labeling each node have been permuted. Again plotting $f$, but with respect to this new ordering of the integers induced by the choice of generating set, we see that the traditionally smooth behavior of the cosine function disappears. The corresponding plots are shown in Figure~\ref{fig:s3-plots}.

\begin{figure*}
    \centering
    \begin{subfigure}[t]{0.48\textwidth}
        \includegraphics[width=\textwidth]{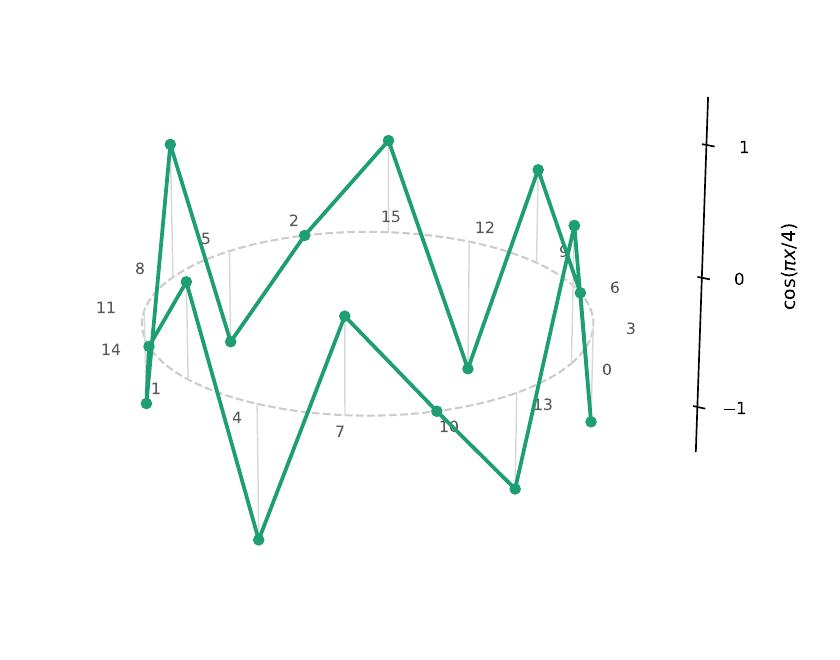}
    \end{subfigure}
    \begin{subfigure}[t]{0.48\textwidth}
        \includegraphics[width=\textwidth]{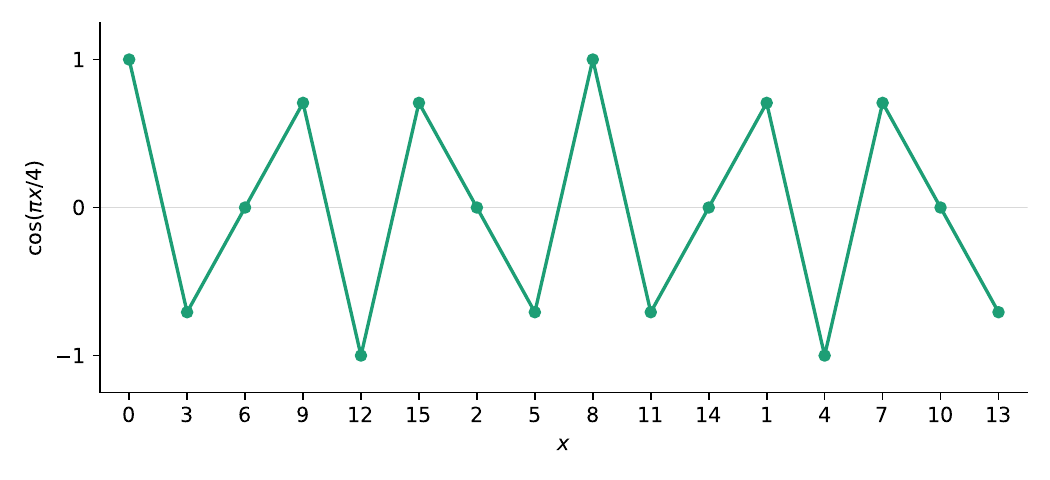}
    \end{subfigure}
    
    \caption{Plot of $f(x)=\frac12(\chi_2+\chi_{14})=\cos\left(\frac{\pi}{4}x\right)$ over the Cayley graph of $\mathbb{Z}_{16}$ with generating set $\{\pm3\}$. The plot above the actual Cayley graph is shown in the left panel, while the plot in a planar embedding is shown in the right panel.}
    \label{fig:s3-plots}
\end{figure*}

Comparing Figures~\ref{fig:s1-plots} and \ref{fig:s3-plots}, we see that $f$ appears more smooth when choosing the generating set $S_1$. Therefore, we expect that $f$ has Fourier coefficients concentrated at lower frequency when choosing $S_1$ for the generating set, and we expect $f$ to have Fourier coefficients concentrated at higher frequency when choosing $S_3$. Indeed, for $S_1$, the averaging operator is
\begin{equation}
    (Af)(x)=\frac12(f(x+1)+f(x-1)),
\end{equation}
and so
\begin{equation}
    (A\chi_k)(x)=\frac12\left(e^{2\pi ik(x+1)/16}+e^{2\pi ik(x-1)/16}\right)=\cos\left(\frac{\pi}{8}k\right)\chi_k(x)
\end{equation}
and the ordering is therefore
\begin{equation}
    \omega(\chi_k)=1-\cos\left(\frac{\pi}{8}k\right).
\end{equation}
Explicitly, we have $\omega(\chi_0)<\omega(\chi_1)=\omega(\chi_{15})<\omega(\chi_2)=\omega(\chi_{14})<\omega(\chi_3)=\omega(\chi_{13})<\omega(\chi_4)=\omega(\chi_{12})<\omega(\chi_5)=\omega(\chi_{11})<\omega(\chi_6)=\omega(\chi_{10})<\omega(\chi_7)=\omega(\chi_9)<\omega(\chi_8)$. Since $f=\frac12(\chi_2+\chi_{14})$, we see that $f$ is low frequency for $S_1$ as expected. For $S_3$, the averaging operator is
\begin{equation}
    (Af)(x)=\frac12(f(x+3)+f(x-3)),
\end{equation}
and so
\begin{equation}
    (A\chi_k)(x)=\frac12\left(e^{2\pi ik(x+3)/16}+e^{2\pi ik(x-3)/16}\right)=\cos\left(\frac{3\pi}{8}k\right)\chi_k(x)
\end{equation}
and the ordering is therefore
\begin{equation}
    \omega(\chi_k)=1-\cos\left(\frac{3\pi}{8}k\right).
\end{equation}
Explicitly, we have $\omega(\chi_0)<\omega(\chi_5)=\omega(\chi_{11})<\omega(\chi_6)=\omega(\chi_{10})<\omega(\chi_1)=\omega(\chi_{15})<\omega(\chi_4)=\omega(\chi_{12})<\omega(\chi_7)=\omega(\chi_{9})<\omega(\chi_2)=\omega(\chi_{14})<\omega(\chi_3)=\omega(\chi_{13})<\omega(\chi_8)$. Since $f=\frac12(\chi_2+\chi_{14})$, we see that $f$ is high frequency for $S_3$ as expected.

\subsection{Visualizing Smoothness on a Dihedral Group}
Consider the dihedral group $G=D_3$ and let $r$ and $f$ denote the rotation by 120 degrees and the flip about an axis through the triangle, respectively. This group has three irreps: the trivial representation defined by $\sigma_\textnormal{triv}(g)=1$, the sign representation defined by $\sigma_\textnormal{sgn}(r)=1$ and $\sigma_\textnormal{sgn}(f)=-1$, and the standard representation defined by
\begin{equation}
    \sigma_\textnormal{std}(r)=\frac12\begin{pmatrix}
        -1&-\sqrt{3}\\ \sqrt{3}&-1
    \end{pmatrix}
\end{equation}
and
\begin{equation}
    \sigma_\textnormal{std}(f)=\begin{pmatrix}
        1&0\\ 0&-1
    \end{pmatrix}.
\end{equation}
We again consider two generating sets, $S_1=\{r,f,r^2\}$ and $S_2=\{f,rf\}$. For $S_1$, the averaging operator is
\begin{equation}
    (Ah)(g)=\frac13(h(gr^2)+h(gr)+h(gf)),
\end{equation}
and so
\begin{equation}
    (A\sigma_\textnormal{triv})(g)=1=\sigma_\textnormal{triv},
\end{equation}
from which it follows that $\omega(\sigma_\textnormal{triv})=0$. Similarly, we have
\begin{equation}
    (A\sigma_\textnormal{sgn})(g)=\frac13(1+1-1)\sigma_\textnormal{sgn}(g)=\frac13\sigma_\textnormal{sgn}(g),
\end{equation}
from which it follows that $\omega(\sigma_\textnormal{sgn})=\frac23$. Finally, for the standard representation, we have
\begin{equation}
    (A\sigma_\textnormal{std})(g)=\frac13\sigma_\textnormal{std}(g)
    \begin{pmatrix}
        0&0\\0&-2
    \end{pmatrix},
\end{equation}
from which it follows that $\omega(\sigma_\textnormal{std})=\frac43$. So the ordering with respect to $S_1$ is $\omega(\sigma_\textnormal{triv})<\omega(\sigma_\textnormal{sgn})<\omega(\sigma_\textnormal{std})$.

Considering instead the generating set $S_2$, the averaging operator becomes
\begin{equation}
    (Ah)(g)=\frac12(h(gf)+h(grf)),
\end{equation}
and so
\begin{equation}
    (A\sigma_\textnormal{triv})(g)=1=\sigma_\textnormal{triv},
\end{equation}
from which it follows that $\omega(\sigma_\textnormal{triv})=0$. Similarly, we have
\begin{equation}
    (A\sigma_\textnormal{sgn})(g)=-\sigma_\textnormal{sgn}(g),
\end{equation}
from which it follows that $\omega(\sigma_\textnormal{sgn})=2$. Finally, for the standard representation, we have
$\omega(\sigma_\textnormal{std})=1$. So the ordering with respect to $S_2$ is $\omega(\sigma_\textnormal{triv})<\omega(\sigma_\textnormal{std})<\omega(\sigma_\textnormal{sgn})$.

\begin{figure*}
    \includegraphics[width=\textwidth]{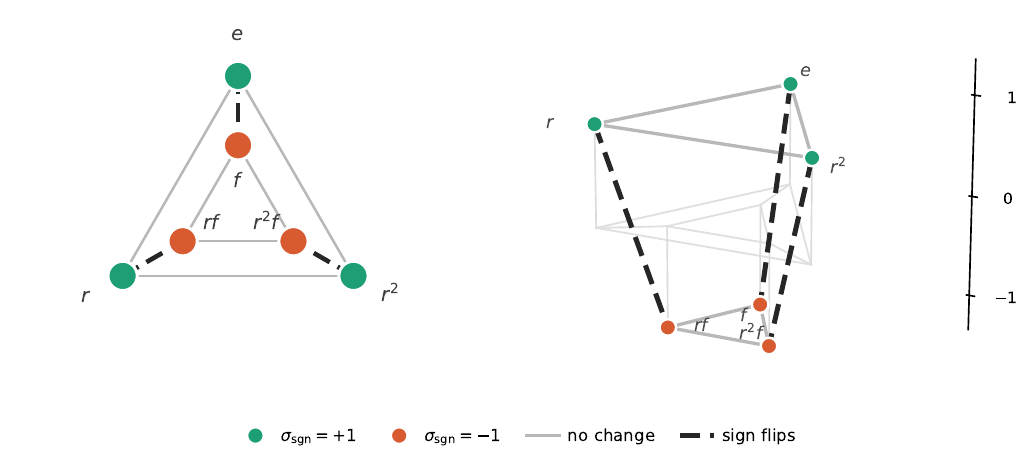}
    \caption{The sign representation of $D_3$ on the Cayley graph of $(D_3,S_1)$ with $S_1=\{r,r^2,f\}$, shown in the plane and lifted to the value of the function. Dashed edges are those across which the sign changes. Three of the nine edges are dashed, and $\omega(\sigma_\textnormal{sgn})=2/3$.}
    \label{fig:s1-dihedral-plots}
\end{figure*}

We plot the function $\sigma_\textnormal{sgn}$ over the Cayley graph in Figures~\ref{fig:s1-dihedral-plots} and \ref{fig:s2-dihedral-plots} for both generating sets. For $S_1$, the sign representation is the second frequency, whereas for $S_2$, it is the third. Thus, the sign function should be interpreted as smoother in the former case. Indeed, looking at the figures, the function value changes dramatically less often in the case of $S_1$ than it does in the case of $S_2$. In fact, for $S_2$, every edge in the Cayley graph corresponds to a maximal amount of change.
 
\begin{figure*}
    \includegraphics[width=\textwidth]{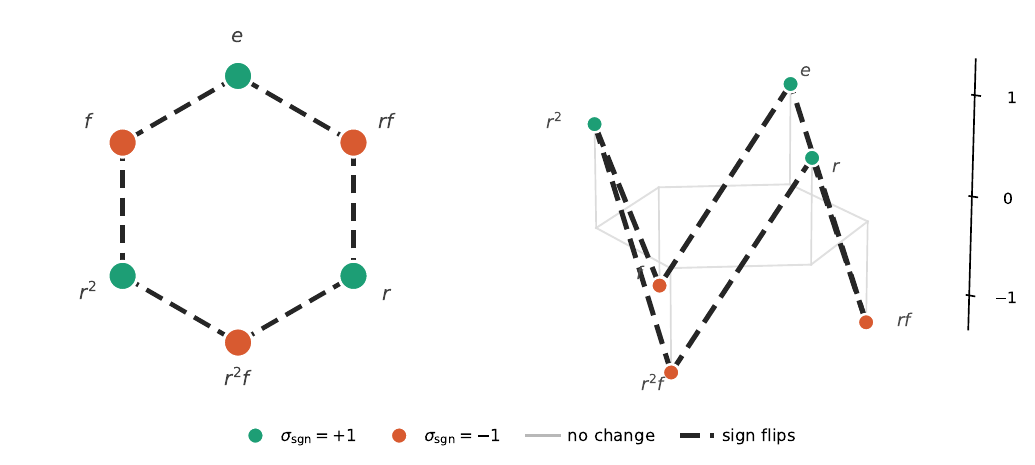}
    \caption{The sign representation of $D_3$ on the Cayley graph of $(D_3,S_2)$ with $S_2=\{f,rf\}$, shown in the plane and lifted to the value of the function. Dashed edges are those across which the sign changes. Every generator is a reflection, so every edge joins a rotation to a reflection and all six edges are dashed, giving the maximal value $\omega(\sigma_\textnormal{sgn})=2$.}
    \label{fig:s2-dihedral-plots}
\end{figure*}

\section{Smoothness as an Inductive Bias}\label{sec:prior}

Suppose we are not analyzing a function, but attempting to build one. Training an interpretable machine learning model is the prototypical example~\cite{gilpin2018explaining,murdoch2019definitions,gili2026inherent}. We wish to bias the construction toward smooth functions, but this presupposes a choice of group action and generating set and begs the question of how these objects should be chosen, given a problem. What are the features of the problem that make one choose group $G$ over group $H$ or generating set $S$ over generating set $T$? Let us fix a problem and explore how a designer might settle these questions. 

An RNA molecule is a string over the four letters $\mathtt{A}$, $\mathtt{C}$, $\mathtt{G}$, and $\mathtt{U}$. Unlike DNA, it is single stranded, so it folds back on itself. A letter at one position can pair with a complementary letter elsewhere in the same string, $\mathtt{A}$ with $\mathtt{U}$, $\mathtt{G}$ with $\mathtt{C}$, and $\mathtt{G}$ with $\mathtt{U}$. A \emph{secondary structure} is a set of such pairs with each position occurring in at most one pair. Two common additional assumptions are made: a minimum hairpin length so that paired positions $i<j$ satisfy $j-i>3$, and the absence of pseudoknots so that any two pairs $(i,j)$ and $(k,l)$ with $i<k$ satisfy either $j<k$ or $l<j$. Each such structure is assigned a free energy by the standard nearest-neighbor thermodynamic model~\cite{turner2010nndb}, in which stacked pairs lower the energy and the unpaired loops they enclose raise it by an amount growing with the length of the loop (see Figure~\ref{fig:stack}). The molecule occupies low energy structures, and the minimum free energy over all pseudoknot-free secondary structures of a given string, which we write $E(\textnormal{string})$, is computable by dynamic programming~\cite{zuker1981optimal} in time cubic in the length once internal loops are capped at the conventional size. Dropping the pseudoknot-free restriction changes the problem, and minimization over general pseudoknotted structures is NP-hard for standard energy models~\cite{lyngso2000pseudoknots}.

\begin{figure}
    \centering
    \includegraphics[width=0.5\linewidth]{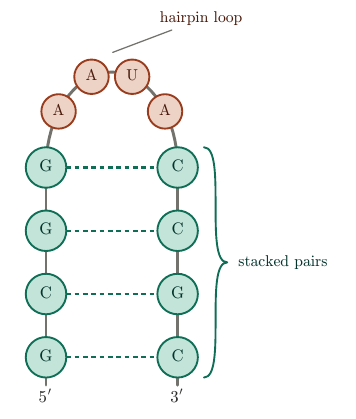}
    \caption{A hairpin, the simplest pseudoknot-free secondary structure. Dashed rungs are base pairs and the unpaired letters at the top form the loop closed by the stem. Each adjacent pair of base pairs is a stack that lowers the free energy, while the enclosed loop raises it by an amount growing with its length.}
    \label{fig:stack}
\end{figure}

Now suppose $n=12$ short RNA blocks called \emph{modules} are fixed in advance, and a construct is built by concatenating all twelve in some order. The designer wants the arrangement whose fold is most stable, so the response is $f(\pi)=E(\text{concatenation in the order }\pi)$ and the goal is to minimize it over $X$. Each evaluation is one dynamic program and is cheap, but $\lvert X\rvert=12!=479001600$, so enumeration is out of reach and the designer budgets $500$ evaluations. The task is to fit a surrogate to those $500$ values and minimize the surrogate exactly.

The domain and the group are settled by asking what leaves $f$ unchanged. If two modules had identical sequences, exchanging their labels would be an invariance, the domain would be the quotient of $X$ by that exchange, and Proposition~\ref{prop:restricted-fourier} would delete the frequencies with no fixed vector. An RNA string has a direction and the reverse of a construct is a different molecule with a different fold, not the same one relabeled, so the one symmetry a designer might hope for beyond the above is unavailable. With twelve distinct modules and no reversal symmetry, we record the full order, take $X=G=\mathbb{S}_{12}$, and let it act on itself by translation. This is the smallest transitive choice and the stabilizer is trivial. Moreover, $m_\lambda=\chi_\lambda(e)=d_\lambda$ for every partition $\lambda\vdash 12$, so nothing is deleted.

When choosing the generating set, our goal is to choose group elements which can be naturally considered incremental steps on our data. Two such generating sets come to mind. The first is the adjacent transpositions
\begin{equation}
    S_\textnormal{adj}=\{(i\ \ i+1):1\le i\le n-1\},
\end{equation}
which interchange two neighbors and leave every other item alone. Indeed, in the context of a ranking, such a transposition performs the minimal change by swapping the ordering of two adjacent items. The Cayley graph corresponding to this choice of generator is the permutohedron, and the generalized Hamming weight of Section~\ref{sec:finite-groups} is the number of inversions, which we note is Kendall's tau distance~\cite{kendall1938new}. The second generating set is the insertions
\begin{equation}
    S_\textnormal{ins}=\{(i\ i+1\ \cdots\ j)^{\pm1}:1\le i<j\le n\}
\end{equation}
which are obtained by removing an item in position $j$ and reinserting it in position $i$, sliding the intervening items over by one. The size of this set is $\lvert S_\textnormal{ins}\rvert=(n-1)^2$, with $n-1$ moves of cycle length two and $2(n-m+1)$ of cycle length $m$ for each $3\le m\le n$. Insertion is the elementary edit of a sorted list and the move underlying insertion sort, and it is no less local a description of small change than a neighbor swap; the two notions are simply different and should be applied in different contexts. 

In the context of our RNA secondary structure problem, the generating set is chosen by asking which rearrangements the designer is willing to call small. A stem (the stack of dashed rungs in Figure~\ref{fig:stack}) formed between two modules closes a loop containing everything between its arms, and the penalty for that loop grows with how far apart the two modules sit on the chain. An adjacent transposition changes the number of intervening modules between any pair by at most one, and since the modules are fixed, it changes any separation measured in nucleotides by at most twice the length of the longest module. An insertion moves a module across up to eleven others and can change the separations involving it by a distance comparable to the length of most of the construct. Holding the pairing set fixed, adjacent transpositions therefore perturb loop lengths on a scale set by individual module lengths, whereas insertions can perturb them on the scale of the whole construct. What that argument bounds is the change in the separations, rather than the change in the energy, since the minimizing structure itself moves with the order and the stacking and junction terms depend on the letters adjacent to each pair; a single swap can create or destroy a stem, and an insertion that preserves the relative order of a module and its partners can leave the energy nearly fixed. Here, we take $S=S_\textnormal{adj}$ as a declaration of which rearrangements count as incremental.

The designer may further believe that the modules near the ends of the construct matter less than those in the middle. After all, the ends have fewer ways to be enclosed. That belief cannot be expressed here. All adjacent transpositions lie in the single conjugacy class $K$ of transpositions, so a weighting $\varphi=\lambda_e\delta_e-\sum_iw_i\delta_{s_i}$ with $w_i\ge0$ and $\lambda_e=\sum_iw_i$, the latter being the condition that $T_\varphi$ annihilate the constants, has conjugation average $\varphi^\natural=\sum_iw_i(\delta_e-\mu_K)$, and by Proposition~\ref{prop:admissible-block}, the induced ordering is a nonnegative multiple of $\omega_K$ no matter what the individual $w_i$ are, and a positive one as soon as some $w_i>0$. Position dependent weights change the operator but not the ordering, so the belief has to enter through the model rather than the prior.

With these choices made, the designer chooses a bandlimit to enforce smoothness with, and this is settled by counting. Every generator is a transposition, so $\omega(\lambda)=1-\chi_\lambda(\tau)/d_\lambda$ for a single transposition $\tau$, and Frobenius' formula~\cite{diaconis1988group} gives
\begin{equation}
    \omega(\lambda)=1-\frac{2}{n(n-1)}\sum_{(i,j)\in\lambda}(j-i),
\end{equation}
the sum of the contents of the cells of $\lambda$. For $n=12$, the smallest values are $\omega((12))=0$, then $\omega((11,1))=\frac{2}{11}$, then $\omega((10,2))=\frac13$. Since $m_\lambda=d_\lambda$, the parameter count below a cutoff is $\sum_\lambda d_\lambda^2$. A cutoff at $\frac{2}{11}$ gives $1+11^2=122$ parameters and a cutoff at $\frac13$ gives $122+54^2=3038$. With $500$ evaluations, the first is estimable and the second is not, so the cutoff is chosen to be $\frac{2}{11}$.

We now identify the 122 dimensional space concretely, rather than as an abstract isotypic sum. Consider the permutation matrices $P(\pi)$ with $(j,i)$ entry $\delta_{\pi(i),j}$. This matrix is $12\times12$, and so we have $144$ functions $\pi\mapsto\delta_{\pi(i),j}$. The map $P$ is the natural permutation representation on $12$ points, which decomposes as $(12)\oplus(11,1)$, and so every matrix entry $\delta_{\pi(i),j}$ lies in the span of the two surviving blocks after the cutoff. Conversely, the span is all of it, and we can check the dimension by hand. We ask which matrices $A=(a_{ij})$ give
\begin{equation}
    \sum_{i,j}a_{ij}\delta_{\pi(i),j}=\sum_ia_{i,\pi(i)}=0
\end{equation}
for every permutation $\pi$. Comparing two permutations that differ by a transposition forces $a_{ij}=r_i+c_j$ for some constants $r_i$ and $c_j$, and it follows that
\begin{equation}\label{eq:sum-condition}
    \sum_ir_i=-\sum_ic_{\pi(i)}=-\sum_i c_i.
\end{equation}
The pairs $(r,c)$ give 24 parameters, minus 1 for the shift $(r+t,c-t)$ that leaves $A$ unchanged, and minus 1 for the sum condition \eqref{eq:sum-condition}. That leaves a 22-dimensional kernel, so the span has dimension $144-22=122$ as expected.

What survives is $\operatorname{span}\{\pi\mapsto\delta_{\pi(i),j}\}$, the first order model of ranked data~\cite{diaconis1988group,chen2021signal}, in which the surrogate is
\begin{equation}
    \widetilde{f}(\pi)=\sum_{i,j}a_{ij}\delta_{\pi(i),j}
\end{equation}
for a matrix of module-position energies. The declaration has done two things at once. It has cut the parameter count from $479001600$ to $122$, which is what makes the fit possible from one budget of folds, and it has left an objective whose minimization over $\mathbb{S}_{12}$ is a linear assignment problem and therefore exactly solvable in cubic time rather than by search. The four choices were decided by four different features of the problem. What the energy is invariant under fixed the domain and group, the designer's declaration of which rearrangements count as incremental fixed the generating set, the conjugacy structure of that set determined which further beliefs the prior could carry, and the evaluation budget fixed the bandlimit. Together, these choices produce an inductive bias which simplifies the problem in much the same way a symmetry constraint does in geometric deep learning~\cite{bronstein2017geometric,bronstein2021geometric,gerken2023geometric} and perhaps more explicitly so its quantum counterpart known as geometric quantum machine learning~\cite{meyer2023exploiting,larocca2022group,bradshaw2025learning}.

\section{Smoothness on Compact Groups}\label{sec:compact}
The construction of Section~\ref{sec:finite-groups} used finiteness of $G$ in exactly two places: to normalize the counting measure and to draw the Cayley graph. The first is inessential and the second is not, and separating the two clarifies what the ordering parameter really depends on. In this section, we let $G$ be a compact Hausdorff group with normalized Haar measure $dg$, and we write $L^2(G)$ for the associated Hilbert space with inner product
\begin{equation}
    \langle f,h\rangle=\int_G\overline{f(g)}h(g)\ dg.
\end{equation}

The Peter--Weyl theorem gives an orthogonal decomposition
\begin{equation}
    L^2(G)=\bigoplus_{\sigma\in\widehat{G}}\mathcal{A}_\sigma,
\end{equation}
where $\mathcal{A}_\sigma\cong\mathcal{H}_\sigma\otimes\mathcal{H}^*_\sigma$ and $\mathcal{H}_\sigma$ is the carrier space for the irrep $\sigma$. The decomposition is an orthogonal Hilbert direct sum over $\widehat{G}$, with each $d_\sigma$ finite, and the matrix entries $\sqrt{d_\sigma}\sigma_{ij}$ again form an orthonormal basis. Fix a finite symmetric set $S\subset G$ with $e\notin S$ and define the averaging operator $A$ and the Laplacian $\mathcal{L}$ as we did before. Each right translation is unitary on $L^2(G)$, so $A$ is a convex combination of unitaries and $\mathcal{L}$ is bounded with $\| A\|\le1$. Both of the following basic results carry over without much modification as neither proof uses finiteness.

\begin{proposition}\label{prop:quadratic-compact}
    Let $f\in L^2(G)$. Then
    \begin{equation}
        \langle f,\mathcal{L}f\rangle=\frac{1}{2\lvert S\rvert}\sum_{s\in S}\int_G\lvert f(g)-f(gs)\rvert^2\ dg.
    \end{equation}
\end{proposition}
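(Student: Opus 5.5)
The plan is to repeat the argument of Proposition~\ref{prop:quadratic} with sums over $G$ replaced by Haar integrals, and to identify exactly which properties of the counting measure that argument used. There are only two. The first is right invariance of the measure: the substitution $g\mapsto gs^{-1}$ must leave the integral unchanged. This holds for the normalized Haar measure, since a compact group is unimodular and left Haar measure is therefore also right invariant. The second is the symmetry $S=S^{-1}$, which is a hypothesis of the statement.

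The first step is to show that $A$ is Hermitian. For $f,h\in L^2(G)$ we write
\begin{equation}
\langle f,Ah\rangle=\frac{1}{\lvert S\rvert}\sum_{s\in S}\int_G\overline{f(g)}h(gs)\,dg .
\end{equation}
The interchange of the finite sum with the integral is harmless. We then substitute $g\mapsto gs^{-1}$ using right invariance, which gives $\int_G\overline{f(gs^{-1})}h(g)\,dg$. Finally we reindex $s\mapsto s^{-1}$ over the symmetric set $S$ and obtain $\langle Af,h\rangle$. Every integral here is finite: each right translation $R_s$ is unitary on $L^2(G)$, so by Cauchy--Schwarz each integrand is integrable.

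The second step is to expand $\lvert f(g)-f(gs)\rvert^2=\lvert f(g)\rvert^2+\lvert f(gs)\rvert^2-2\operatorname{Re}\bigl(\overline{f(g)}f(gs)\bigr)$ and integrate term by term. Right invariance gives $\int_G\lvert f(gs)\rvert^2\,dg=\|f\|^2$. Averaging over $s\in S$ then shows that the right-hand side equals $\frac12\bigl(2\|f\|^2-2\operatorname{Re}\langle f,Af\rangle\bigr)$. Since $A$ is Hermitian, $\langle f,Af\rangle$ is real, so this expression is $\langle f,(I-A)f\rangle=\langle f,\mathcal{L}f\rangle$.

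The only point that needs care is the justification of right invariance of Haar measure. For compact $G$ this is the standard unimodularity fact: the modular function is a continuous homomorphism into $\mathbb{R}_{>0}$, and its image is a compact subgroup of $\mathbb{R}_{>0}$, which forces it to be trivial. The other point to watch is that functions in $L^2(G)$ are only defined almost everywhere. Unitarity of $R_s$ ensures that $f(gs)$ is a well-defined element of $L^2(G)$, so each manipulation above is an identity between integrals of $L^1$ functions. Beyond this, the argument is identical to the finite case.
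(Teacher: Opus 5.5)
Your proposal is correct and matches the paper's own proof, which reruns the argument of Proposition~\ref{prop:quadratic} with the Haar integral in place of the finite sum and right invariance of $dg$ in place of re-indexing. Your remarks on unimodularity of compact groups, and on right translates of $L^2$ functions being well defined almost everywhere, fill in details the paper leaves implicit.
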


\begin{proposition}\label{prop:block-compact}
    The Laplacian $\mathcal{L}$ preserves each $\mathcal{A}_\sigma$ and acts there as $I_{d_\sigma}\otimes(I_{d_\sigma}-M_\sigma)$, where $M_\sigma=\frac{1}{\lvert S\rvert}\sum_{s\in S}\sigma(s)$. Consequently, $\omega(\sigma)=1-\operatorname{Tr}[M_\sigma]/d_{\sigma}$ is defined for every $\sigma\in\widehat{G}$ and satisfies $\omega(\sigma)\in[0,2]$.
\end{proposition}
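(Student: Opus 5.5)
The plan is to repeat the computation of Proposition~\ref{prop:block-diag} verbatim, with Haar measure in place of counting measure. Fix $\sigma\in\widehat{G}$ and a unitary realization of it; this is possible because every finite-dimensional representation of a compact group is unitarizable by averaging an inner product against Haar measure. The matrix entries $\sigma_{ij}$ are continuous functions, so $A\sigma_{ij}$ is given pointwise by $(A\sigma_{ij})(g)=\frac{1}{\lvert S\rvert}\sum_{s\in S}\sigma_{ij}(gs)$. Since $\sigma$ is a homomorphism, $\sigma(gs)=\sigma(g)\sigma(s)$, which gives
\[
(A\sigma_{ij})(g)=\sum_{k=1}^{d_\sigma}\sigma_{ik}(g)(M_\sigma)_{kj}.
\]
Because $S$ is finite, this is a finite sum and no convergence question arises. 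Hence $A$ maps the finite-dimensional space $\mathcal{A}_\sigma$ into itself. It fixes the row index $i$ and acts on the column index by $M_\sigma$, so it acts as $I_{d_\sigma}\otimes M_\sigma$, and $\mathcal{L}=I-A$ acts as $I_{d_\sigma}\otimes(I_{d_\sigma}-M_\sigma)$.

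One point is genuinely new here: $L^2(G)$ is now an infinite Hilbert direct sum, so we must confirm that this description of $\mathcal{L}$ on each block determines $\mathcal{L}$. The paper has already noted that $A$ is bounded with $\|A\|\le 1$. A bounded operator that leaves each summand of an orthogonal decomposition invariant is determined by its restrictions, because the algebraic span of the blocks is dense by Peter--Weyl. For the same reason $A$ also preserves the orthogonal complement of each block, and the block-diagonal description holds on all of $L^2(G)$.

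For the bound on $\omega(\sigma)$, show first that $M_\sigma$ is Hermitian. Symmetry of $S$ and unitarity of $\sigma$ give $M_\sigma^\dagger=\frac{1}{\lvert S\rvert}\sum_{s\in S}\sigma(s^{-1})=M_\sigma$. Next, $\|M_\sigma\|\le\frac{1}{\lvert S\rvert}\sum_{s\in S}\|\sigma(s)\|=1$, so every eigenvalue of $M_\sigma$ lies in $[-1,1]$. Their mean $\operatorname{Tr}[M_\sigma]/d_\sigma$ therefore lies in $[-1,1]$, which gives $\omega(\sigma)\in[0,2]$. Finally, $\omega$ does not depend on the unitary realization chosen, since conjugating $\sigma$ by a unitary conjugates $M_\sigma$ and leaves its trace unchanged.

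No step is really difficult. The only place needing care is the density and invariance argument above, which lets the block computation describe the operator on the whole infinite-dimensional space; it is handled by Peter--Weyl together with the boundedness of $A$.
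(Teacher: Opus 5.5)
Your proposal is correct and takes the paper's route. The paper says the proof of Proposition~\ref{prop:block-diag} carries over verbatim, since it uses only that $\sigma$ is a finite-dimensional unitary homomorphism, and the bound $\omega(\sigma)\in[0,2]$ follows from Hermiticity of $M_\sigma$ and $\|M_\sigma\|\le 1$, as you argue. Your extra density step via Peter--Weyl and boundedness of $A$ is not needed for the statement as posed, which only asserts invariance of each $\mathcal{A}_\sigma$. It is correct, though, and it upgrades the conclusion to genuine block diagonality on all of $L^2(G)$.
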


The proof of Proposition~\ref{prop:quadratic-compact} is that of Proposition~\ref{prop:quadratic} with the sum over $G$ replaced by the Haar integral, using right invariance of $dg$ in place of the re-indexing of a finite sum. The proof of Proposition~\ref{prop:block-compact} is that of Proposition~\ref{prop:block-diag} verbatim, since it uses only $\sigma$ as a finite-dimensional unitary homomorphism. The ordering parameter is therefore available for every compact group, and it is again an invariant of the pair $(G,S)$.

What does not carry over is the Cayley graph picture. For continuous $G$, the vertex set can be uncountable, and more seriously, the graph is never connected when $G$ is infinite. Indeed, a compact Hausdorff group is homogeneous, so it has no isolated points unless it is discrete, and a countable space without isolated points is not Baire; hence an infinite compact Hausdorff group is uncountable. A finitely generated subgroup is countable and therefore proper. The components are the cosets of $\langle S\rangle$, and the word metric is finite only within a component. The graph is thus a device internal to $G$ rather than a combinatorial model of $G$, and the geometric reading in which nearby group elements differ by few generators is not available. What replaces it is Proposition~\ref{prop:quadratic-compact}. The quadratic form survives intact as an integral, and it is this functional rather than the graph that the ordering measures.

Now suppose in addition that $G$ is a compact connected Lie group. A different limit recovers a geometric picture. Replacing the finite set $S$ by a family of conjugation invariant probability measures concentrated near the identity and imposing the usual diffusive scaling and second-moment hypotheses produces a family of averaging operators whose associated Laplacians converge, after suitable rescaling, to the Laplace--Beltrami operator~\cite{applebaum2014probability,liao2004levy} of an Ad-invariant metric on $G$; that is, to the Casimir element. For $G$ connected with simple Lie algebra, such a metric is unique up to scale, so the induced frequency ordering is canonical up to an overall positive scale. That limiting ordering is the familiar one. The Casimir eigenvalue is the weight against which the Sobolev scale on $\widehat{G}$ is built~\cite{ruzhansky2010pseudo}, and smoothness in the classical sense is equivalent to superpolynomial decay of the Fourier coefficients measured against it~\cite{sugiura1971fourier}. The scaling limit is also the diffusion half of Hunt's classification of the generators of convolution semigroups on a Lie group~\cite{hunt1956semi}, the jump half of which is the continuous counterpart of the weighted Cayley-Laplacians of Proposition~\ref{prop:positivity}. In particular, when $G$ has simple Lie algebra, the dependence on a generating set disappears, up to an overall positive scale, in this conjugation-invariant infinitesimal limit.

\section{Conclusion}\label{sec:conclusion}
We have argued that smoothness on $L^2(G)$ is not a property a function possesses on its own. A function is smooth relative to a declaration of which group elements count as a small step, and once that declaration is made, the Cayley-Laplacian construction determines an ordering. The declaration is a symmetric generating set $S$, the structure it induces is the Cayley graph of $(G,S)$, and the roughness of a function is the Dirichlet energy of that graph. Because the graph is a Cayley graph, its Laplacian is block diagonal over the dual, and the mean of the eigenvalues in the block at $\sigma$ is a single number $\omega(\sigma)$ attached to the irrep rather than to a choice of basis. Ordering $\widehat{G}$ by $\omega$ is what makes the phrase ``concentrated at low frequency'' meaningful for a non-abelian group.

Section~\ref{sec:characterization} measures how much of this was forced. The four requirements, that the operator be Hermitian, that it distinguish no point of $G$, that it assign zero energy to the constant functions, and that the energy it assigns not depend on a choice of complex conjugate, turn out to constrain the ordering only up to reality, agreement on conjugate pairs, and vanishing at the trivial representation. Every function on $\widehat{G}$ with those properties is realized by an admissible operator, and the orderings $\omega_K$ coming from inversion orbits of conjugacy classes are a basis for them. So these axioms should be read as supplying coordinates rather than a restriction; an ordering becomes a weight vector on $\mathcal{K}$, and a symmetric generating set is the case in which that vector is the probability distribution induced on the classes the set meets. The restrictions come afterward. Nonnegativity of the weights identifies a natural positive semidefinite subclass, namely the weighted Cayley-Laplacians, among all admissible operators. Adding locality and uniformity with respect to a fixed generating set leaves only the Laplacian of Section~\ref{sec:finite-groups} up to positive scale.

Two further layers of dependence emerged along the way. When the domain is a set rather than a group, the acting group determines which frequencies exist at all, since an irrep is visible precisely when it contains a vector fixed by the stabilizer, while the generating set orders whatever survives. These choices are independent, and the examples of Section~\ref{sec:examples} show that they can be made to disagree sharply, with $\mathbb{Z}_n$ and $\mathbb{S}_n$ acting on the same $n$ labels producing an ordering of $n$ frequencies in one case and of two in the other. Within a fixed group, changing the generating set can invert the ordering outright, as the sign representation of $D_3$ illustrates by moving from second to last.

Several directions remain. The ordering parameter is a mean over a block, and when $S$ is not closed under conjugation, the block genuinely splits, so the ordering on $\widehat{G}$ is strictly coarser than the ordering on the spectrum of $\mathcal{L}$. For compact groups, we established that the block diagonalization and the bounds persist, but the characterization theorem does not transfer directly, since its proof uses finiteness both in decomposing an invariant function over the partition $G$ by conjugacy classes and in the evaluation argument establishing linear independence. A compact analogue replacing $\mathcal{K}$ by conjugation invariant measures seems plausible and would complete the picture. Hunt's theorem indicates the form that analogue should take, with a Casimir term in place of the diagonal of $\varphi$ and a conjugation invariant L\'evy measure in place of the weights $\alpha_K$~\cite{hunt1956semi,applebaum2011infinitely}.

\bibliographystyle{unsrt}
\bibliography{refs.bib}

@article{belis2026spectral,
  title={Spectral methods: crucial for machine learning, natural for quantum computers?},
  author={Belis, Vasilis and Bowles, Joseph and Gupta, Rishabh and Peters, Evan and Schuld, Maria},
  journal={arXiv preprint arXiv:2603.24654},
  year={2026}
}

@article{bradshaw2026symmetry,
  title={Symmetry-based quantum codes beyond the {P}auli group},
  author={Bradshaw, Zachary P and LaBorde, Margarite L and Montero, Dillon},
  journal={Physical Review A},
  volume={113},
  number={4},
  pages={042431},
  year={2026},
  publisher={APS}
}

@article{rethinasamy2024logarithmic,
  title={Logarithmic-depth quantum circuits for {H}amming weight projections},
  author={Rethinasamy, Soorya and LaBorde, Margarite L and Wilde, Mark M},
  journal={Physical Review A},
  volume={110},
  number={5},
  pages={052401},
  year={2024},
  publisher={APS}
}

@article{gili2026inherent,
  title={Inherent interpretability provides inherent value in quantum machine learning},
  author={Gili, Kaitlin and Bradshaw, Zachary P},
  journal={arXiv preprint arXiv:2607.13827},
  year={2026}
}

@book{diaconis1988group,
  title={Group representations in probability and statistics},
  author={Diaconis, Persi},
  volume={11},
  year={1988},
  publisher={Institute of mathematical statistics Hayward, CA}
}

@book{terras1999fourier,
  title={Fourier analysis on finite groups and applications},
  author={Terras, Audrey},
  number={43},
  year={1999},
  publisher={Cambridge University Press}
}

@book{chung1997spectral,
  title={Spectral graph theory},
  author={Chung, Fan RK},
  volume={92},
  year={1997},
  publisher={American Mathematical Soc.}
}

@article{diaconis1981generating,
  title={Generating a random permutation with random transpositions},
  author={Diaconis, Persi and Shahshahani, Mehrdad},
  journal={Zeitschrift f{\"u}r Wahrscheinlichkeitstheorie und verwandte Gebiete},
  volume={57},
  number={2},
  pages={159--179},
  year={1981},
  publisher={Springer-Verlag Berlin/Heidelberg}
}

@article{shuman2013emerging,
  title={The emerging field of signal processing on graphs: Extending high-dimensional data analysis to networks and other irregular domains},
  author={Shuman, David I and Narang, Sunil K and Frossard, Pascal and Ortega, Antonio and Vandergheynst, Pierre},
  journal={IEEE signal processing magazine},
  volume={30},
  number={3},
  pages={83--98},
  year={2013},
  publisher={IEEE}
}

@article{babai1979spectra,
  title={Spectra of {C}ayley graphs},
  author={Babai, L{\'a}szl{\'o}},
  journal={Journal of Combinatorial Theory, Series B},
  volume={27},
  number={2},
  pages={180--189},
  year={1979},
  publisher={Elsevier}
}

@article{caputo2010proof,
  title={Proof of {A}ldous’ spectral gap conjecture},
  author={Caputo, Pietro and Liggett, Thomas and Richthammer, Thomas},
  journal={Journal of the American Mathematical Society},
  volume={23},
  number={3},
  pages={831--851},
  year={2010}
}

@article{ortega2018graph,
  title={Graph signal processing: Overview, challenges, and applications},
  author={Ortega, Antonio and Frossard, Pascal and Kova{\v{c}}evi{\'c}, Jelena and Moura, Jos{\'e} MF and Vandergheynst, Pierre},
  journal={Proceedings of the IEEE},
  volume={106},
  number={5},
  pages={808--828},
  year={2018},
  publisher={IEEE}
}

@article{leus2023graph,
  title={Graph signal processing: History, development, impact, and outlook},
  author={Leus, Geert and Marques, Antonio G and Moura, Jos{\'e} MF and Ortega, Antonio and Shuman, David I},
  journal={IEEE Signal Processing Magazine},
  volume={40},
  number={4},
  pages={49--60},
  year={2023},
  publisher={IEEE}
}

@article{dong2020graph,
  title={Graph signal processing for machine learning: A review and new perspectives},
  author={Dong, Xiaowen and Thanou, Dorina and Toni, Laura and Bronstein, Michael and Frossard, Pascal},
  journal={IEEE Signal processing magazine},
  volume={37},
  number={6},
  pages={117--127},
  year={2020},
  publisher={IEEE}
}

@book{fulton2013representation,
  title={Representation theory: a first course},
  author={Fulton, William and Harris, Joe},
  year={2013},
  publisher={Springer Science \& Business Media}
}

@book{rudin1962fourier,
  title={Fourier analysis on groups},
  author={Rudin, Walter},
  volume={12},
  year={1962},
  publisher={Interscience publishers New York}
}

@article{ceccherini1945harmonic,
  title={Harmonic analysis on finite groups},
  author={Ceccherini-Silberstein, Tullio and Scarabotti, Fabio and Tolli, Filippo},
  journal={Cambridge studies in advanced mathematics},
  volume={108},
  year={2008}
}

@book{odonnel2014analysis,
  title={Analysis of boolean functions},
  author={O'Donnell, Ryan},
  volume={2},
  year={2014},
  publisher={Cambridge University Press Cambridge}
}

@book{applebaum2014probability,
  title={Probability on compact Lie groups},
  author={Applebaum, David},
  year={2014},
  publisher={Springer}
}

@book{liao2004levy,
  title={L{\'e}vy processes in Lie groups},
  author={Liao, Ming},
  volume={162},
  year={2004},
  publisher={Cambridge university press}
}

@inproceedings{rahaman2019spectral,
  title={On the spectral bias of neural networks},
  author={Rahaman, Nasim and Baratin, Aristide and Arpit, Devansh and Draxler, Felix and Lin, Min and Hamprecht, Fred and Bengio, Yoshua and Courville, Aaron},
  booktitle={International conference on machine learning},
  pages={5301--5310},
  year={2019},
  organization={PMLR}
}

@article{xu2019frequency,
  title={Frequency principle: Fourier analysis sheds light on deep neural networks},
  author={Xu, Zhi-Qin John and Zhang, Yaoyu and Luo, Tao and Xiao, Yanyang and Ma, Zheng},
  journal={arXiv preprint arXiv:1901.06523},
  year={2019}
}

@article{rockmore2002fast,
  title={Fast Fourier transform for fitness landscapes},
  author={Rockmore, Dan and Kostelec, Peter and Hordijk, Wim and Stadler, Peter F},
  journal={Applied and Computational Harmonic Analysis},
  volume={12},
  number={1},
  pages={57--76},
  year={2002},
  publisher={Elsevier}
}

@inproceedings{ghandehari2019non,
  title={A non-commutative viewpoint on graph signal processing},
  author={Ghandehari, Mahya and Guillot, Dominique and Hollingsworth, Kristopher},
  booktitle={2019 13th International conference on Sampling Theory and Applications (SampTA)},
  pages={1--4},
  year={2019},
  organization={IEEE}
}

@article{beck2024frames,
  title={Frames for signal processing on Cayley graphs},
  author={Beck, Kathryn and Ghandehari, Mahya and Hudson, Skyler and Paltenstein, Jenna},
  journal={Journal of Fourier Analysis and Applications},
  volume={30},
  number={6},
  pages={66},
  year={2024},
  publisher={Springer}
}

@article{chen2021signal,
  title={Signal processing on the permutahedron: tight spectral frames for ranked data analysis},
  author={Chen, Yilin and DeJong, Jennifer and Halverson, Tom and Shuman, David I},
  journal={Journal of Fourier Analysis and Applications},
  volume={27},
  number={4},
  pages={70},
  year={2021},
  publisher={Springer}
}

@article{stadler1996landscapes,
  title={Landscapes and their correlation functions},
  author={Stadler, Peter F},
  journal={Journal of Mathematical chemistry},
  volume={20},
  number={1},
  pages={1--45},
  year={1996},
  publisher={Springer}
}

@article{hordijk1998amplitude,
  title={Amplitude spectra of fitness landscapes},
  author={Hordijk, Wim and Stadler, Peter F},
  journal={Advances in Complex Systems},
  volume={1},
  number={01},
  pages={39--66},
  year={1998},
  publisher={World Scientific}
}

@article{azangulov2024stationary,
  title={Stationary kernels and Gaussian processes on Lie groups and their homogeneous spaces I: the compact case},
  author={Azangulov, Iskander and Smolensky, Andrei and Terenin, Alexander and Borovitskiy, Viacheslav},
  journal={Journal of Machine Learning Research},
  volume={25},
  number={280},
  pages={1--52},
  year={2024}
}

@incollection{stadler2002fitness,
  title={Fitness landscapes},
  author={Stadler, Peter F},
  booktitle={Biological evolution and statistical physics},
  pages={183--204},
  year={2002},
  publisher={Springer}
}

@book{isaacs1994character,
  title={Character theory of finite groups},
  author={Isaacs, I Martin},
  volume={69},
  year={1994},
  publisher={Courier Corporation}
}

@article{ruzhansky2010pseudo,
  title={Pseudo-differential operators and symmetries},
  author={Ruzhansky, Michael and Turunen, Ville},
  journal={Background analysis and advanced topics, Pseudo-Differential Operators. Theory and Applications},
  volume={2},
  year={2010},
  publisher={Birkh{\"a}user Verlag Basel}
}

@article{sugiura1971fourier,
  author={Sugiura, Mitsuo},
  title={Fourier series of smooth functions on compact Lie groups},
  journal={Osaka Journal of Mathematics}, 
  volume={8}, 
  pages={33--47}, 
  year={1971}
  }

@article{hunt1956semi,
  title={Semi-groups of measures on Lie groups},
  author={Hunt, Gilbert Agnew},
  journal={Transactions of the American Mathematical Society},
  volume={81},
  number={2},
  pages={264--293},
  year={1956}
}

@article{applebaum2011infinitely,
  author={Applebaum, David},
  title={Infinitely divisible central probability measures on compact Lie groups---regularity, semigroups and transition kernels},
  journal={The Annals of Probability}, 
  volume={39}, 
  number={6}, 
  pages={2474--2496}, 
  year={2011}
  }

@article{kendall1938new,
  title={A new measure of rank correlation},
  author={Kendall, Maurice G and others},
  journal={Biometrika},
  volume={30},
  number={1-2},
  pages={81--93},
  year={1938},
  publisher={Oxford University Press}
}

@article{turner2010nndb,
  title={NNDB: the nearest neighbor parameter database for predicting stability of nucleic acid secondary structure},
  author={Turner, Douglas H and Mathews, David H},
  journal={Nucleic acids research},
  volume={38},
  number={suppl\_1},
  pages={D280--D282},
  year={2010},
  publisher={Oxford University Press}
}

@article{zuker1981optimal,
  title={Optimal computer folding of large RNA sequences using thermodynamics and auxiliary information},
  author={Zuker, Michael and Stiegler, Patrick},
  journal={Nucleic acids research},
  volume={9},
  number={1},
  pages={133--148},
  year={1981},
  publisher={Oxford University Press}
}

@inproceedings{lyngso2000pseudoknots,
  title={Pseudoknots in RNA secondary structures},
  author={Lyngs{\o}, Rune B and Pedersen, Christian NS},
  booktitle={Proceedings of the fourth annual international conference on Computational molecular biology},
  pages={201--209},
  year={2000}
}

@inproceedings{gilpin2018explaining,
  title={Explaining explanations: An overview of interpretability of machine learning},
  author={Gilpin, Leilani H and Bau, David and Yuan, Ben Z and Bajwa, Ayesha and Specter, Michael and Kagal, Lalana},
  booktitle={2018 IEEE 5th International Conference on data science and advanced analytics (DSAA)},
  pages={80--89},
  year={2018},
  organization={IEEE}
}

@article{murdoch2019definitions,
  title={Definitions, methods, and applications in interpretable machine learning},
  author={Murdoch, W James and Singh, Chandan and Kumbier, Karl and Abbasi-Asl, Reza and Yu, Bin},
  journal={Proceedings of the National Academy of Sciences},
  volume={116},
  number={44},
  pages={22071--22080},
  year={2019},
  publisher={National Academy of Sciences}
}

@article{bronstein2017geometric,
  title={Geometric deep learning: going beyond euclidean data},
  author={Bronstein, Michael M and Bruna, Joan and LeCun, Yann and Szlam, Arthur and Vandergheynst, Pierre},
  journal={IEEE Signal Processing Magazine},
  volume={34},
  number={4},
  pages={18--42},
  year={2017},
  publisher={IEEE}
}

@article{bronstein2021geometric,
  title={Geometric deep learning: Grids, groups, graphs, geodesics, and gauges},
  author={Bronstein, Michael M and Bruna, Joan and Cohen, Taco and Veli{\v{c}}kovi{\'c}, Petar},
  journal={arXiv preprint arXiv:2104.13478},
  year={2021}
}

@article{gerken2023geometric,
  title={Geometric deep learning and equivariant neural networks},
  author={Gerken, Jan E and Aronsson, Jimmy and Carlsson, Oscar and Linander, Hampus and Ohlsson, Fredrik and Petersson, Christoffer and Persson, Daniel},
  journal={Artificial Intelligence Review},
  volume={56},
  number={12},
  pages={14605--14662},
  year={2023},
  publisher={Springer}
}

@article{meyer2023exploiting,
  title={Exploiting symmetry in variational quantum machine learning},
  author={Meyer, Johannes Jakob and Mularski, Marian and Gil-Fuster, Elies and Mele, Antonio Anna and Arzani, Francesco and Wilms, Alissa and Eisert, Jens},
  journal={PRX quantum},
  volume={4},
  number={1},
  pages={010328},
  year={2023},
  publisher={APS}
}

@article{larocca2022group,
  title={Group-invariant quantum machine learning},
  author={Larocca, Mart{\'\i}n and Sauvage, Fr{\'e}d{\'e}ric and Sbahi, Faris M and Verdon, Guillaume and Coles, Patrick J and Cerezo, Marco},
  journal={PRX quantum},
  volume={3},
  number={3},
  pages={030341},
  year={2022},
  publisher={APS}
}

@article{bradshaw2025learning,
  title={Learning equivariant maps with variational quantum circuits},
  author={Bradshaw, Zachary P and Evans, Ethan N and Cook, Matthew and LaBorde, Margarite L},
  journal={Physical Review Applied},
  volume={23},
  number={4},
  pages={044007},
  year={2025},
  publisher={APS}
}
\end{document}